\newtheorem{theorem}{Theorem}[section]
\newtheorem{lemma}[theorem]{Lemma}
\newtheorem{corollary}[theorem]{Corollary}
\newtheorem{proposition}[theorem]{Proposition}
\theoremstyle{definition}
\newtheorem{definition}[theorem]{Definition}
\newtheorem{example}[theorem]{Example}
\theoremstyle{remark}
\newtheorem{remark}[theorem]{Remark}
\numberwithin{equation}{section}
\tikzstyle{block} = [rectangle, draw, text width=5.3em, text centered, rounded corners, minimum height=4em]
\tikzstyle{blocknob} = [text width=4.4em, text centered, minimum height=4em]
\tikzstyle{line} = [draw, -latex']
\def\logmea {\mathrm{logmea}\,}
\def\W {\mathrm{W}}
\def\D {\mathcal{D}}
\def\A {\mathcal{A}}
\begin{document}

\title[Wiman-Valiron theory for Wilson series]{Wiman-Valiron theory for a polynomial series based on the Wilson operator}


\author[K. H. CHENG]{Kam Hang CHENG}
\address{Department of Mathematics, The Hong Kong University of Science and Technology, Clear Water Bay, Kowloon, Hong Kong.}
\email{henry.cheng@family.ust.hk}
\thanks{The author was partially supported by GRF no. 16306315 from the Research Grants Council of Hong Kong.}


\subjclass[2010]{Primary 30D10; Secondary 30B50, 30D20, 30E05, 33C45, 39A20, 39A70}

\date{Dec 13, 2018.\ \ To appear in Journal of Approximation Theory.}


\keywords{Wilson operator; complex function theory; Wiman-Valiron theory; interpolation series; complex difference equations}

\begin{abstract}
We establish a Wiman-Valiron theory for a polynomial series based on the Wilson operator $\D_\W$.\ \ For an entire function $f$ of order smaller than $\frac13$, this theory includes (i) an estimate which shows that $f$ behaves locally like a polynomial consisting of the terms near the maximal term in its Wilson series expansion, and (ii) an estimate of $\D_\W^n f$ compared to $f$.\ \ We then apply this theory in studying the growth of entire solutions to difference equations involving the Wilson operator.
\end{abstract}

\maketitle
\setcounter{tocdepth}{1}
\tableofcontents

\section{Introduction}

The \textbf{\textit{order}} of an entire function $f:\mathbb{C}\to\mathbb{C}$ is defined by
\[
	\sigma\equiv\sigma_f:=\limsup_{r\to\infty}\frac{\ln^+\ln^+ M(r;f)}{\ln r},
\]
where $\displaystyle M(r;f):=\max\{|f(x)|:|x|=r\}$, and the \textbf{\textit{type}} of an entire function of order $\sigma\in(0,+\infty)$ is defined by $\displaystyle\limsup_{r\to\infty}{\frac{\ln^+ M(r;f)}{r^\sigma}}$ \cite{Boas}.\ \ In this paper, we will first show that every entire function $f$ of small order admits a \textit{Wilson series expansion}
\[
	f(x)=\sum_{k=0}^\infty{a_k\tau_k(x;0)}.
\]
The main purpose of this paper is then to show that every entire function $f$ of small order is mostly contributed by just a few terms in this Wilson series expansion which are near the \textit{maximal term} $a_N\tau_N(x;0)$.\ \ With $\D_\W$ denoting the \textit{Wilson divided-difference operator} and $\D_\W^n$ denoting that we apply $\D_\W$ for $n$ times, we will deduce from this result that
\[
	(\D_\W^n f)(x) \hspace{15px}\mbox{is asymptotically similar to}\hspace{15px} \left(\frac{N}{x}\right)^n f(x),
\]
which can be applied in analyzing the growth of entire solutions to difference equations involving the Wilson operator.\ \ This is parallel to the classical Wiman-Valiron theory for the Maclaurin series $f(z)=\sum a_k z^k$ having maximal term $a_N z^N$, from which one can deduce that
\[
	f^{(n)}(z) \hspace{15px}\mbox{is asymptotically similar to}\hspace{15px} \left(\frac{N}{z}\right)^n f(z).
\]

\bigskip
In the following introduction, we briefly review the history of the study of the Wilson operator as well as the classical Wiman-Valiron theory, and we slightly clarify the terminologies stated in the previous paragraph.\ \ Classical hypergeometric orthogonal polynomials have been an active research topic in the recent decades.\ \ The Wilson polynomials $W_n(\cdot; a,b,c,d)$, in particular, are hypergeometric orthogonal polynomials located at the top level of the Askey scheme \cite{Koekoek-Swarttouw}, and are widely recognized as the most general hypergeometric orthogonal polynomials that contain all the known classical hypergeometric orthogonal polynomials as special cases  \cite{Andrews-Askey}.\ \ With four complex parameters $a$, $b$, $c$ and $d$, these polynomials are defined, via a terminating Gauss hypergeometric series, by
\begin{align*}
	&\ \ \ \,\frac{W_n(x;\, a,\, b,\, c,\, d)}{(a+b)_n(a+c)_n(a+d)_n} \\
	&:={}_4F_3\left(
	\begin{matrix}
	\begin{matrix}
		-n, & n+a+b+c+d-1, & a+i\sqrt{x}, & a-i\sqrt{x}
	\end{matrix} \\
	\begin{matrix}
		a+b, & a+c, & a+d
	\end{matrix}
	\end{matrix}
	\ ;\ 1\right),
\end{align*}
where the notation $(a)_k:=a(a+1)\cdots(a+k-1)$ denotes the rising $k$-step factorial of a complex number $a$.\ \ The Wilson operator $\D_\W$ was first considered by Wilson \cite[p. 34]{Askey-Wilson} to study these polynomials.\ \ This operator acts on Wilson polynomials in a similar manner as the usual differential operator acts on monomials, except with a shift in the four parameters:
\begin{align*}
	(\D_\W W_n)(x;a,b,c,d)=C_n W_{n-1}\Big(x;a+\frac{1}{2},b+\frac{1}{2},c+\frac{1}{2},d+\frac{1}{2}\Big),
\end{align*}
where $C_n=-n(n+a+b+c+d-1)$.

\bigskip
Research work regarding the Wilson operator so far has mainly focused on its interactions with Wilson polynomials.\ \ Therefore the author and Chiang have recently looked into this operator in a broader function-theoretic context, and established some results about its interaction with meromorphic functions, including a Nevanlinna theory and a Picard-type theorem \cite{Cheng-Chiang}.\ \ In this paper, we will turn our focus to entire functions and their series expansions.\ \ In 1926, N\o rlund has written a memoir on interpolation series \cite{Noerlund}, a large part of which was devoted on investigating how to expand entire functions into various interpolation series, e.g. Newton series and Stirling series.\ \ As opposed to the Taylor series which converges on disks, the Newton series has its regions of convergence to be right half-planes, while that for the Stirling series is effectively the whole complex plane.\ \ Stirling series converge faster than Newton series, and the interpolating polynomials in Stirling series are obtained by slightly modifying the Wilson polynomials.\ \ Thus it is natural to develop a function theory for an interpolation series using the Wilson polynomials $\{W_n\}$ as a basis, instead of the usual basis $\{(x-x_0)^n\}$ of the Taylor series.\ \ In practice we simply use the basis $\{\tau_n(\cdot;x_0)\}$, where
\[
	\tau_n(x;a^2) := \prod_{k=0}^{n-1}{[(a+ki)^2-x]}
\]
is modified from the key factors $(a+i\sqrt{x})_n(a-i\sqrt{x})_n$ in $W_n(x;a,b,c,d)$.\ \ In fact, following the classical idea as in \cite{Noerlund}, we will show in this paper that every entire function $f$ satisfying the growth condition
\[
	\limsup_{r\to\infty}{\frac{\ln^+ M(r;f)}{\sqrt{r}}} < 2\ln 2
\]
admits, for each $x_0\in\mathbb{C}$, a \textit{Wilson series expansion}
\[
	\sum_{k=0}^\infty{a_k\tau_k(x;x_0)}
\]
which converges uniformly to $f$ on any compact subset of $\mathbb{C}$, where the coefficients $a_k$ are generated by the values of $f$ at a sequence of interpolation points.

\bigskip
In the 1910s, Wiman \cite{Wiman1, Wiman2} introduced a theory which relates the \textit{maximum modulus} $M(r;f)$ of an entire function $f$ on a circle $\partial D(0;r)$ to the \textit{maximal term} $\mu(r;f)$ of its Maclaurin series expansion on the circle.\ \ Here given an entire function $\displaystyle f(z)=\sum_{k=0}^\infty{a_kz^k}$, one defines the \textit{maximal term} and the \textit{central index} of $f$ to be respectively the functions $\mu(\cdot;f):[0,+\infty)\to[0,+\infty)$ and $\nu(\cdot;f):(0,+\infty)\to\mathbb{N}_0$, given by
\[
	\mu(r;f) :=\max_{n\in\mathbb{N}_0}|a_n|r^n
\]
and
\[
	\nu(r;f) :=\max\{n\in\mathbb{N}_0:|a_n|r^n=\mu(r;f)\},
\]
so that for every $r>0$ one simply has $\mu(r;f)=|a_N|r^N$ where $N=\nu(r;f)$.\ \ In the following decades, Wiman's theory was then developed more extensively by Valiron \cite{Valiron1, Valiron2, Valiron}, Saxer \cite{Saxer}, Clunie \cite{Clunie1, Clunie2} and K\"{o}vari \cite{Kovari1, Kovari2}.\ \ It turned out that using this theory one can describe the local behavior of an entire function using its maximal term.\ \ The main idea, in Clunie's form \cite{Clunie1}, is that given any entire function $\displaystyle f(z)=\sum_{k=0}^\infty{a_kz^k}$ of finite order, the quantity
\[
	\left|\sum_{k:|k-N|>\kappa}{a_kz^k}\right|,
\]
which is the modulus of the tail of its Maclaurin series expansion, is small relative to $\mu(r;f)$ as $r=|z|\to\infty$ outside a certain small exceptional set, where $\kappa$ is a small positive integer that depends on the central index $N$.\ \ Moreover, one can even obtain from this theory a local relationship between $f$ and its derivative $f'$, given by
\[
	\frac{z}{N}f'(z)=f(z)(1+o(1))
\]
as $|z|=r\to\infty$ outside a certain small exceptional set.\ \ With different expressions of the $o(1)$, this holds for all $z\in\partial D(0;r)$ in Clunie \cite{Clunie2}, and holds just for those $z\in\partial D(0;r)$ taking values $|f(z)|$ close to the maximum $M(r;f)$ in Valiron \cite{Valiron}, Clunie \cite{Clunie1} and Saxer \cite{Saxer}.\ \ Fenton \cite{Fenton1} has extended the theory to entire functions of finite lower order.\ \ Apart from the chapter in Valiron's book \cite{Valiron3} which summarized much of his work on this topic, Hayman has written a comprehensive survey \cite{Hayman2} on the theory, and Fenton has also written a short summary \cite{Fenton} that has made the theory easy to access.\ \ One can also see \cite{Jank-Volkmann} and \cite{He-Xiao} for more modern references of this theory.

\bigskip
Recently, the Wiman-Valiron theory has been extended by Ishizaki and Yanagihara \cite{IY} to the case of Newton series and the ordinary difference operator, and Chiang and Feng \cite{Chiang-Feng4} have also given a Wiman-Valiron estimate of successive ordinary differences.\ \ Ishizaki and Yanagihara's result \cite{IY} is that for any entire function $\displaystyle f(z)=\sum_{k=0}^\infty{a_kz(z-1)\cdots(z-k+1)}$ of order smaller than $\frac{1}{2}$, the quantity
\[
	\sum_{k:|k-N|>\kappa}{k^n|a_k|r(r+1)\cdots(r+k-1)}
\]
is small relative to $N^n\mu^*(r;f)$ as $r\to\infty$ outside a certain small exceptional set, where $\mu^*(r;f)$ and $N=\nu^*(r;f)$ are Newton series analogues of the maximal term and the central index of $f$, and $\kappa$ is a small positive integer that depends on $N$.\ \ This result enables us to deduce a local behavior of successive ordinary differences of $f$, namely that
\[
	(\Delta^n f)(z) \hspace{15px}\mbox{is asymptotically similar to}\hspace{15px} \left(\frac{N}{z}\right)^n f(z)
\]
as $r=|z|\to\infty$ outside the same small exceptional set.

\bigskip
Now in this paper we aim to develop an analogous theory for the Wilson series, i.e., to investigate how the local behavior of an entire function is controlled by the terms in its Wilson series expansion and to look into the behavior of successive Wilson differences of entire functions.\ \ In particular, the main result (Theorem~\ref{tail}) is that for any entire function $\displaystyle f(x)=\sum_{k=0}^\infty{a_k\tau_k(x;0)}$ of order smaller than $\frac{1}{3}$, the quantity
\[
	\sum_{k:|k-N|>\kappa}{k^n|a_k\tau_k(r;0)|}
\]
is small relative to $N^n\mu_\W(r;f)$ as $r\to\infty$ outside a certain small exceptional set, where $\mu_\W(r;f)$ and $N=\nu_\W(r;f)$ are Wilson series analogues of the maximal term and the central index of $f$, and $\kappa$ is again a small positive integer that depends on $N$.\ \ This gives a local behavior of successive Wilson differences of $f$ (see Theorem~\ref{WVmain}), which is that
\[
	(\D_\W^n f)(x) \hspace{15px}\mbox{is asymptotically similar to}\hspace{15px} \left(\frac{N}{x}\right)^n f(x)
\]
as $r=|x|\to\infty$ outside the same small exceptional set.\ \ Along the development of this theory, we have also obtained some new results.\ \ These include a Leibniz rule for the Wilson operator (see Theorem~\ref{WLeibniz}) and a result analogous to Lindel\"{o}f-Pringsheim Theorem (see Theorem~\ref{WLP}) which relates the order of the maximal term of a Wilson series to the rate of decay of its coefficients.\ \ These results on the Wilson calculus may potentially have independent interest in combinatorics or number theory.\ \ We remark here that part of the results obtained in this paper is contained in the PhD thesis of the author \cite{Cheng}.

\bigskip
This paper is organized as follows.\ \ In \S\ref{sec:DW}, we will first give the definition and some basic properties of the Wilson operator $\D_\W$, including its Leibniz rule.\ \ We will also prove the Wilson Series Theorem, which states that a function satisfying the aforementioned sufficient growth condition will admit a Wilson series expansion that converges uniformly on compact sets to the function itself.\ \ In the subsequent sections, we will then develop the Wiman-Valiron theory of $\D_\W$.\ \ We will state in \S\ref{sec:Mainresults} our main results which include two theorems.\ \ The first one asserts that the local behavior of an entire function is mainly contributed by those terms in its Wilson series expansion that are near the maximal term; and the second one gives an estimate of $\D_\W^n f$ compared to $f$.\ \ Before proving these two theorems in \S\ref{sec:Proofs}, we will establish some properties of the Wilson maximal term and central index in \S\ref{sec:Propmunu}.\ \ Finally, the main results will be applied in \S\ref{sec:Applications} to give estimates on the growth of transcendental entire solutions to a certain type of Wilson difference equations and on the growth of its Wilson maximal term.

\bigskip
In this paper, we adopt the following notations:
\begin{enumerate}[(i)]
	\item $\mathbb{N}$ denotes the set of all natural numbers \textit{excluding} $0$, and $\mathbb{N}_0:=\mathbb{N}\cup\{0\}$.
	\item For every positive real number $r$ and every complex number $a$, $D(a;r)$ denotes the open disk of radius $r$ centered at $a$ in the complex plane.
	\item For every positive real number $r$, we denote $\ln^+ r:=\max\{\ln r,0\}$.
	\item A \textit{complex function} always means a function in \textit{one} complex variable, and an \textit{entire function} always means a holomorphic function from $\mathbb{C}$ to $\mathbb{C}$, unless otherwise specified.
	\item A summation notation of the form $\displaystyle\sum_{k:S_k}$ denotes a sum running over all the $k$'s such that the statement $S_k$ is true.
	\item For any two functions $f,g:[0,\infty)\to\mathbb{R}$, we write
	\begin{itemize}
		\item $g(r)=O(f(r))$ as $r\to\infty$ if and only if there exist $C>0$ and $M>0$ such that $|g(r)|\le C|f(r)|$ whenever $r>M$;
		\item $g(r)=o(f(r))$ as $r\to\infty$ if and only if for every $C>0$, there exists $M>0$ such that $|g(r)|\le C|f(r)|$ whenever $r>M$;
	\end{itemize}
\end{enumerate}

\pagebreak
\section{The Wilson operator and the Wilson basis}
\label{sec:DW}

In this section, we give the definition of the Wilson operator and a few of its properties.

\begin{definition}
	Let $\sqrt{\cdot}$ be a branch of the complex square-root with the imaginary axis as the branch cut.\ \ For each $x\in\mathbb{C}$ we denote
	\[
		x^+ := \left(\sqrt{x}+\frac{i}{2}\right)^2 \hspace{30px} \mbox{and} \hspace{30px} x^- := \left(\sqrt{x}-\frac{i}{2}\right)^2.
	\]
	We also denote $x^{+(0)}=x^{-(0)}:=x$, $x^{\pm(m)}:=(x^{\pm(m-1)})^\pm$ and $x^{\pm(-m)}:=x^{\mp(m)}$ for every positive integer $m$.\ \ Then we define the \textbf{\textit{Wilson operator}} $\D_\W$, which acts on all complex functions, as follows:
	\begin{equation}
		\label{A1} (\D_\W f)(x) := \frac{f(x^+)-f(x^-)}{x^+ - x^-} = \frac{f((\sqrt{x}+\frac{i}{2})^2)-f((\sqrt{x}-\frac{i}{2})^2)}{2i\sqrt{x}}.
	\end{equation}
	We also define the \textbf{\textit{Wilson averaging operator}} $\A_\W$ by
	\begin{equation}
		\label{A2} (\A_\W f)(x) := \frac{f(x^+)+f(x^-)}{2} = \frac{f((\sqrt{x}+\frac{i}{2})^2) + f((\sqrt{x}-\frac{i}{2})^2)}{2}.
	\end{equation}
\end{definition}

According to \eqref{A1} and \eqref{A2}, although there are two choices of $\sqrt{x}$ for each $x\ne 0$, $\D_\W$ and $\A_\W$ are independent of the choice of $\sqrt{\cdot}$ and are thus always well-defined.\ \ Moreover, the value of $\D_\W f$ at $0$ should be defined as
	\[
		(\D_\W f)(0) := \lim_{x\to 0}(\D_\W f)(x) = f'\left(-\frac{1}{4}\right),
	\]
in case $f$ is differentiable at $-\frac{1}{4}$.\ \ We sometimes write $z:=\sqrt{x}$.\ \ We consider branches of square-root with the imaginary axis as the branch cut, because of the shift of $\frac{i}{2}$ in the definition of $\D_\W$.

\bigskip
We first look at some algebraic properties of the Wilson operator.\ \ First of all, it is apparent that the Wilson operator has the following product and quotient rules.
\begin{lemma}
\textup{(Wilson product and quotient rules)}
\label{PQRule}
	For every pair of complex functions $f$ and $g$, we have
	\begin{enumerate}[(i)]
		\item
			$\displaystyle (\D_\W(fg))(x) = (\A_\W f)(x)(\D_\W g)(x) + (\D_\W f)(x)(\A_\W g)(x)$, and
		\item
			$\displaystyle \left(\D_\W\frac{f}{g}\right)(x) = \frac{(\D_\W f)(x)(\A_\W g)(x) - (\A_\W f)(x) (\D_\W g)(x)}{g(x^+)g(x^-)}$ provided that $g\not\equiv 0$.
	\end{enumerate}
\end{lemma}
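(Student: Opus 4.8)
The plan is to verify both identities by direct substitution into the definitions \eqref{A1} and \eqref{A2}; no analytic input is needed, only the algebra of binomials. Throughout, fix $x\in\mathbb{C}$ together with a choice of $\sqrt{x}$, and abbreviate $f^\pm:=f(x^\pm)$, $g^\pm:=g(x^\pm)$ and $\delta:=x^+-x^-=2i\sqrt{x}$, so that $(\D_\W f)(x)=(f^+-f^-)/\delta$ and $(\A_\W f)(x)=(f^++f^-)/2$, and likewise for $g$. Recall from the remark following the definition that these quantities are independent of the branch of $\sqrt{\cdot}$, so there is no ambiguity in the computation.

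For part (i), the left-hand side is $(\D_\W(fg))(x)=(f^+g^+-f^-g^-)/\delta$. On the right-hand side I will place the two terms over the common denominator $2\delta$ and invoke the elementary identity
\[
	(f^++f^-)(g^+-g^-)+(f^+-f^-)(g^++g^-)=2(f^+g^+-f^-g^-),
\]
which is immediate upon expanding both products and noting that the cross terms $f^+g^-$ and $f^-g^+$ cancel. Dividing through by $2\delta$ then reproduces the left-hand side exactly.

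For part (ii), observe first that at a point $x$ with $g(x^+)g(x^-)\ne 0$ one has $(\D_\W(f/g))(x)=\bigl(f^+/g^+-f^-/g^-\bigr)/\delta=(f^+g^--f^-g^+)/(\delta g^+g^-)$, and it is precisely at such $x$ that both sides of the asserted identity are meaningful. Expanding the numerator of the stated right-hand side over $2\delta$ and applying the companion identity
\[
	(f^+-f^-)(g^++g^-)-(f^++f^-)(g^+-g^-)=2(f^+g^--f^-g^+)
\]
yields exactly $(f^+g^--f^-g^+)/(\delta g^+g^-)$, completing the verification.

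Since everything reduces to expanding products of two binomials, I do not anticipate any genuine obstacle; the only points meriting a word of care are the independence of $x^\pm$ from the choice of $\sqrt{\cdot}$ (already dealt with) and the edge case $g(x^+)g(x^-)=0$ in (ii), where $f/g$—and hence $\D_\W(f/g)$—is undefined at $x$, so the identity is to be understood as holding wherever its right-hand side is defined.
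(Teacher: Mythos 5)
Your verification is correct: both identities follow from exactly the binomial expansions you write down, and the cross-term cancellations check out. The paper itself states this lemma without proof (calling it "apparent"), and your direct computation is precisely the routine verification being alluded to, so there is nothing further to compare.
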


More generally, we have the following Leibniz rule for the Wilson operator.

\begin{theorem}
\textup{(Wilson Leibniz rule)}
\label{WLeibniz}
For every pair of complex functions $f$ and $g$ and every $n\in\mathbb{N}_0$, we have
\begin{align}
	\label{A15}\D_\W^n(fg) = \sum_{k=0}^n{C(n,k)\sum_{j=0}^{n-k}{\binom{n-k}{j}\A_\W^{n-k-j}\D_\W^{j+k}f\A_\W^j\D_\W^{n-j}g}},
\end{align}
where
\begin{align}
	\label{A16}C(n,k)=\left(-\frac14\right)^k\frac{(n-1+k)!}{(n-1-k)!k!}
\end{align}
for every pair of integers $0\le k\le n$.\ \ In \eqref{A16} we adopt the convention that $\frac{(-1)!}{(-1)!}:=1$ and $\frac{1}{(-1)!}:=0$.
\end{theorem}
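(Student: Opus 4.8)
The plan is to prove \eqref{A15} by induction on $n$. When $n=0$ the asserted identity reads $fg=fg$, once one notes, using the convention $\frac{(-1)!}{(-1)!}=1$, that $C(0,0)=1$; when $n=1$ the convention $\frac{1}{(-1)!}=0$ forces $C(1,1)=0$, so the right-hand side collapses to $\A_\W f\,\D_\W g+\D_\W f\,\A_\W g$, which is exactly Lemma~\ref{PQRule}(i). Thus the base case \emph{is} the Wilson product rule, and everything happens in the inductive step: assuming \eqref{A15} for some $n$, apply $\D_\W$ to both sides and rearrange the result into the $n+1$ instance of \eqref{A15}.

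The algebraic input needed for the rearrangement is the commutation relation
\[
	\A_\W\D_\W-\D_\W\A_\W=\tfrac12\D_\W^2 .
\]
I would verify this straight from \eqref{A1}--\eqref{A2}: with $w:=x^+-x^-=2i\sqrt x$ and the elementary identities $(x^\pm)^+-(x^\pm)^-=2i\sqrt{x^\pm}=w\mp1$ and $(x^\pm)^\mp=x$, each of $\A_\W\D_\W f$, $\D_\W\A_\W f$ and $\D_\W^2 f$ becomes an explicit rational combination of $f(x^{+(2)})$, $f(x)$, $f(x^{-(2)})$ over the denominators $w$, $w-1$, $w+1$, and the identity drops out upon clearing denominators. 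From it one gets first $\A_\W\D_\W^j=\D_\W^j\A_\W+\tfrac j2\D_\W^{j+1}$ (an easy induction on $j$), and then, by a short induction on $m$, the normal-ordering formula
\[
	\D_\W\A_\W^m=\sum_{\ell=0}^m(-1)^\ell\,\frac{m!}{2^\ell(m-\ell)!}\,\A_\W^{m-\ell}\D_\W^{\ell+1}\qquad(m\in\mathbb{N}_0),
\]
which rewrites any word $\D_\W\A_\W^m\D_\W^p$ as a linear combination of the ``normal-ordered'' words $\A_\W^a\D_\W^b$.

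Now apply $\D_\W$ to the right-hand side of \eqref{A15} for $n$, treating each summand $\bigl(\A_\W^{n-k-j}\D_\W^{j+k}f\bigr)\bigl(\A_\W^{j}\D_\W^{n-j}g\bigr)$ as a product of two functions of $x$ and using Lemma~\ref{PQRule}(i). This produces $\bigl(\A_\W^{n-k-j+1}\D_\W^{j+k}f\bigr)\bigl(\D_\W\A_\W^{j}\D_\W^{n-j}g\bigr)$ together with $\bigl(\D_\W\A_\W^{n-k-j}\D_\W^{j+k}f\bigr)\bigl(\A_\W^{j+1}\D_\W^{n-j}g\bigr)$; substituting the normal-ordering formula into the $\D_\W\A_\W^{\bullet}$ factors and re-indexing, every resulting term has the shape $\A_\W^a\D_\W^b f\cdot\A_\W^c\D_\W^d g$ with $a+c=(n+1)-k'$ and $b+d=(n+1)+k'$ for a single index $k'$, each level-$n$ term with parameter $k$ feeding the terms with parameters $k'\ge k$. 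Collecting the coefficient of a fixed monomial $\A_\W^a\D_\W^b f\cdot\A_\W^c\D_\W^d g$ and requiring that it equal $C(n+1,k')\binom{(n+1)-k'}{a}$ reduces the whole inductive step to a finite family of binomial/factorial identities satisfied by the explicit numbers \eqref{A16}; the boundary conventions $\frac{(-1)!}{(-1)!}=1$ and $\frac1{(-1)!}=0$ are exactly what is needed to make the degenerate instances ($n=0$, and $k'=n+1$ at each level, where $C(n+1,n+1)=0$) consistent.

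I expect this last step — the bookkeeping of the several nested sums and the verification of the coefficient identity for \eqref{A16} — to be the main obstacle; the operator algebra (product rule together with the commutation relation) is routine by comparison. A variant that avoids the normal-ordering altogether is to first prove the ``shift form''
\[
	\D_\W^n(fg)(x)=\sum_{j=0}^n\binom nj\,(\D_\W^j f)(x^{+(n-j)})\,(\D_\W^{n-j}g)(x^{+(-j)}),
\]
which drops out of the observation that $\D_\W^n h(x)=n!\,h[x^{+(n)},x^{+(n-2)},\dots,x^{+(-n)}]$ (the $n$-th divided difference of $h$ over those nodes) combined with the classical Leibniz rule for divided differences, and then to expand the shifts through $\A_\W^a h(x)=2^{-a}\sum_{\ell=0}^a\binom a\ell h(x^{+(a-2\ell)})$; but since matching this against \eqref{A15} again comes down to the same combinatorial identity, I would carry out the inductive argument as the primary proof.
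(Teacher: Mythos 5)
Your strategy coincides with the paper's: induction on $n$, the commutator relation $\A_\W\D_\W-\D_\W\A_\W=\tfrac12\D_\W^2$ (the paper's Lemma~\ref{commutator}), the normal-ordering formula $\D_\W\A_\W^{m}=\sum_{\ell=0}^{m}\binom{m}{\ell}\left(-\tfrac12\right)^{\ell}\ell!\,\A_\W^{m-\ell}\D_\W^{\ell+1}$ (your coefficient $(-1)^{\ell}m!/(2^{\ell}(m-\ell)!)$ is the same number), an application of the product rule Lemma~\ref{PQRule}(i) to each summand, and a re-indexing so that every term lands on a monomial $\A_\W^{a}\D_\W^{b}f\cdot\A_\W^{c}\D_\W^{d}g$ with $a+c=(n+1)-k'$ and $b+d=(n+1)+k'$. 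All of that is correct and is exactly what the paper does.

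The gap is the step you yourself flag as the main obstacle: you assert that the collected coefficients match $C(n+1,k')\binom{(n+1)-k'}{a}$ because the numbers \eqref{A16} "satisfy" the resulting identities, but you never verify this, and it is not a routine binomial manipulation. After the bookkeeping, the entire inductive step reduces to the single recurrence
\[
  C(n+1,k)=\sum_{j=0}^{k}\left(-\tfrac12\right)^{k-j}\frac{(n-j)!}{(n-k)!}\,C(n,j),
\]
which is the content of the paper's Lemma~\ref{Cnumrecur} (equation \eqref{Cnumbers} shifted by one). Substituting the closed form \eqref{A16} turns it into
\[
  \sum_{j=0}^{k}\frac{1}{2^{j}}\,\frac{n-j}{n+j}\binom{n+j}{j}=\frac{1}{2^{k}}\binom{n+k}{k},
\]
i.e.\ \eqref{A17}, which the paper proves by rewriting the summand as a telescoping difference $\frac{1}{2^{j}}\binom{n+j}{j}-\frac{1}{2^{j-1}}\binom{n+j-1}{j-1}$. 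Without a proof of this identity the induction does not close, so you should supply it (the telescoping trick is the missing idea). Your alternative "shift form" route via divided differences is sound in principle — the identification $\D_\W^{n}h(x)=n!\,h[x^{+(n)},x^{+(n-2)},\dots,x^{+(-n)}]$ does hold — but, as you note, it funnels into the same combinatorial identity and therefore does not avoid the gap.
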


We need the following two lemmas in proving Theorem~\ref{WLeibniz}.

\begin{lemma}
\label{commutator}
\[
	\A_\W\D_\W - \D_\W\A_\W = \frac{1}{2}\D_\W^2.
\]
\end{lemma}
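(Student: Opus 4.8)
\noindent\textit{Proof proposal.}
The plan is to verify the identity by a direct computation after passing to the variable $z=\sqrt x$. The starting point is the elementary remark made just after the definition: for any complex function $f$ and any $x\neq0$, if $z$ denotes \emph{either} square root of $x$ then
\[
	(\D_\W f)(x)=\frac{f\big((z+\tfrac i2)^2\big)-f\big((z-\tfrac i2)^2\big)}{2iz},\qquad (\A_\W f)(x)=\frac{f\big((z+\tfrac i2)^2\big)+f\big((z-\tfrac i2)^2\big)}{2}.
\]
Both right-hand sides are invariant under $z\mapsto-z$, so these same formulas compute the value of any \emph{iterate} — such as $\D_\W(\D_\W f)$, $\A_\W(\D_\W f)$ or $\D_\W(\A_\W f)$ — at a point $w$ using an arbitrary square root of $w$; this is what legitimises the chain of half-integer translations $z\mapsto z\pm\tfrac i2$ below and removes any further need to watch the branch cut once the first application has been made. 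In particular $\big((z+\tfrac i2)^2\big)^{+}=(z+i)^2$, $\big((z+\tfrac i2)^2\big)^{-}=z^2$, and symmetrically on the $-$ side, so the only points of $\mathbb C$ entering the calculation are $x=z^2$ and $(z\pm i)^2$.

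Next I would evaluate the three composite operators at a generic $x=z^2$. Writing $u:=2iz$, $A:=f\big((z+i)^2\big)-f(x)$ and $B:=f(x)-f\big((z-i)^2\big)$, applying $\A_\W$ then $\D_\W$, and $\D_\W$ then $\A_\W$, and $\D_\W$ twice, gives
\[
	(\D_\W\A_\W f)(x)=\frac{A+B}{2u},\qquad (\A_\W\D_\W f)(x)=\frac12\Big(\frac{A}{u-1}+\frac{B}{u+1}\Big),\qquad (\D_\W^2 f)(x)=\frac1u\Big(\frac{A}{u-1}-\frac{B}{u+1}\Big),
\]
the one point requiring care being that $(\D_\W f)(x^{+})$ and $(\D_\W f)(x^{-})$ carry the shifted denominators $2i(z+\tfrac i2)=u-1$ and $2i(z-\tfrac i2)=u+1$ respectively, rather than $u$.

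The claimed identity then reduces to the purely algebraic statement
\[
	\frac12\Big(\frac{A}{u-1}+\frac{B}{u+1}\Big)-\frac{A+B}{2u}=\frac1{2u}\Big(\frac{A}{u-1}-\frac{B}{u+1}\Big),
\]
which one checks by comparing the coefficients of $A$ and of $B$ on the two sides and invoking the partial-fraction identities $\frac1{u-1}-\frac1u=\frac1{u(u-1)}$ and $\frac1u-\frac1{u+1}=\frac1{u(u+1)}$. Both sides of the lemma being continuous in $x$ wherever $f$ is regular enough, the identity extends by continuity across the finitely many points ($x=0$ and $x=-\tfrac14$) at which the $z$-representation degenerates.

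I do not anticipate a real obstacle: the content is a short computation, and the only thing to be careful about is the bookkeeping of the shifts — correctly tracking $x^{+(2)}=(z+i)^2$ and the shifted denominators $u\mp1$ — together with a clean articulation of why iterating $\D_\W$ and $\A_\W$ is insensitive to the branch of $\sqrt{\,\cdot\,}$ fixed in the definition. If a more conceptual rendering is preferred, one may instead observe that $\D_\W$ corresponds to $M\circ\delta$, where $\delta$ is the pure central-difference operator $h(z)\mapsto h(z+\tfrac i2)-h(z-\tfrac i2)$ and $M$ is multiplication by $1/(2iz)$, while $\A_\W$ corresponds to the pure averaging operator $\alpha\colon h(z)\mapsto\tfrac12\big(h(z+\tfrac i2)+h(z-\tfrac i2)\big)$; since $\alpha$ and $\delta$ commute, $\A_\W\D_\W-\D_\W\A_\W$ corresponds to $[\alpha,M]\,\delta$, and a one-line check gives $[\alpha,M]=\tfrac12\,M\delta M$, whence $[\alpha,M]\,\delta=\tfrac12\,(M\delta)^2$, i.e. $\tfrac12\,\D_\W^2$.
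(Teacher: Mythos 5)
Your computation is correct and is essentially identical to the paper's own proof: the paper likewise evaluates $\A_\W\D_\W f$, $\D_\W\A_\W f$ and $\D_\W^2 f$ at $x=z^2$ in terms of $f(x^{++})-f(x)$ and $f(x)-f(x^{--})$ with the shifted denominators $2iz\mp1$, and concludes by the same partial-fraction regrouping. The closing remark recasting $\D_\W=M\delta$, $\A_\W=\alpha$ and reducing the lemma to $[\alpha,M]=\tfrac12 M\delta M$ is a pleasant conceptual alternative not in the paper, but the main argument coincides with it.
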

\begin{proof}
	For every complex function $f$, we have
	\begin{align*}
		&\ \ \ \,((\A_\W\D_\W - \D_\W\A_\W)f)(x) \\
		&= \frac{1}{2}\left(\frac{f(x^{++})-f(x)}{2iz-1}+\frac{f(x)-f(x^{--})}{2iz+1}\right) \\
		&\ \ \ \,- \frac{1}{2iz}\left(\frac{f(x^{++})+f(x)}{2}-\frac{f(x)+f(x^{--})}{2}\right) \\
		&= \frac{1}{2}\left[(f(x^{++})-f(x))\left(\frac{1}{2iz - 1}-\frac{1}{2iz}\right) - (f(x)-f(x^{--}))\left(\frac{1}{2iz}-\frac{1}{2iz+1}\right)\right] \\
		&= \frac{1}{2}\frac{1}{2iz}\left(\frac{f(x^{++})-f(x)}{2iz - 1} - \frac{f(x)-f(x^{--})}{2iz + 1}\right) \\
		&= \frac{1}{2}(\D_\W^2 f)(x).
	\end{align*}
\end{proof}

\begin{lemma}
\label{Cnumrecur}
The numbers $C(n,k)$ as defined in \eqref{A16} satisfy $C(n,0)=1$ for all $n\in\mathbb{N}_0$, $C(n,n)=0$ for all $n\in\mathbb{N}$, and the recurrence
\begin{align}
	\label{Cnumbers} C(n,k) = \sum_{j=0}^k{\left(-\frac{1}{2}\right)^{k-j}\frac{(n-1-j)!}{(n-1-k)!}C(n-1,j)}
\end{align}
for every pair of integers $0<k<n$.
\end{lemma}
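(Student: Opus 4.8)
The plan is to treat the two boundary values as immediate consequences of \eqref{A16} and to prove the recurrence \eqref{Cnumbers} as a purely algebraic identity, independent of the operator-theoretic context. For the boundary values: for $n\ge1$ one reads off $C(n,0)=\bigl(-\tfrac14\bigr)^{0}\frac{(n-1)!}{(n-1)!\,0!}=1$, while $C(0,0)=\frac{(-1)!}{(-1)!}=1$ by the stated convention; and for $n\ge1$, $C(n,n)=\bigl(-\tfrac14\bigr)^{n}\frac{(2n-1)!}{(-1)!\,n!}=0$ since $\frac{1}{(-1)!}=0$. So the content is the recurrence.

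For the recurrence I would substitute the closed form \eqref{A16} for $C(n-1,j)$ into the right-hand side of \eqref{Cnumbers}. Collecting the scalar factors via $\bigl(-\tfrac12\bigr)^{k-j}\bigl(-\tfrac14\bigr)^{j}=\bigl(-\tfrac14\bigr)^{k}2^{k-j}$, cancelling the common factor $\bigl(-\tfrac14\bigr)^{k}$, and clearing the denominator $(n-1-k)!$ from both sides, the claimed recurrence becomes
\[
	\frac{(n-1+k)!}{k!}=\sum_{j=0}^{k}2^{k-j}\,(n-1-j)!\,\frac{(n-2+j)!}{(n-2-j)!\,j!}.
\]
Using $(n-1-j)!/(n-2-j)!=n-1-j$ (valid for $0\le j\le k<n$, the borderline case $j=n-1$ being covered by $1/(-1)!=0$), writing $m:=n-1\ge1$, and inserting $\frac{(m-1+j)!}{j!}=(m-1)!\binom{m-1+j}{j}$ and $\frac{(m+k)!}{k!}=m!\binom{m+k}{k}$, this reduces after division by $(m-1)!$ to the elementary identity
\[
	m\binom{m+k}{k}=\sum_{j=0}^{k}2^{k-j}(m-j)\binom{m-1+j}{j}.
\]

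To close the argument I would establish this last identity by generating functions: from $\sum_{j\ge0}\binom{m-1+j}{j}x^{j}=(1-x)^{-m}$, applying $x\frac{d}{dx}$ gives $\sum_{j\ge0}j\binom{m-1+j}{j}x^{j}=mx(1-x)^{-m-1}$, whence $\sum_{j\ge0}(m-j)\binom{m-1+j}{j}x^{j}=m(1-2x)(1-x)^{-m-1}$; multiplying by $(1-2x)^{-1}=\sum_{i\ge0}2^{i}x^{i}$ gives $m(1-x)^{-m-1}=\sum_{k\ge0}m\binom{m+k}{k}x^{k}$, and comparing coefficients of $x^{k}$ (the convolution on the left ranges over $0\le j\le k$, exactly the sum in \eqref{Cnumbers}) yields the identity. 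An equally short alternative is induction on $k$: with $S_{k}$ denoting the right-hand side one has $S_{k}=2S_{k-1}+(m-k)\binom{m-1+k}{k}$, and the inductive step collapses, via Pascal's rule, to the trivial relation $m\binom{m+k-1}{k-1}=k\binom{m+k-1}{k}$. I do not anticipate a genuine obstacle; the only delicate point is keeping the factorial conventions $\frac{(-1)!}{(-1)!}=1$ and $\frac{1}{(-1)!}=0$ consistent at the endpoints $k=0$, $k=n$ and the index $j=n-1$, so that the cancellations above are legitimate over the whole range of summation.
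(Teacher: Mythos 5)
Your proposal is correct and follows essentially the same route as the paper: both boundary values are read off from \eqref{A16}, and the recurrence is reduced by direct substitution of the closed form to the binomial identity $m\binom{m+k}{k}=\sum_{j=0}^{k}2^{k-j}(m-j)\binom{m-1+j}{j}$, which is exactly the paper's identity \eqref{A17} (multiplied by $2^k$ and rewritten via $\frac{m-j}{m+j}\binom{m+j}{j}=\frac{m-j}{m}\binom{m-1+j}{j}$). The only difference is that the paper verifies this identity by exhibiting the sum as telescoping, whereas you verify it by generating functions or by induction on $k$; all three verifications are equally short and your handling of the factorial conventions at the endpoints is sound.
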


\begin{proof}
We first prove that the identity
\begin{align}
	\label{A17}\sum_{j=0}^k{\frac{1}{2^j}\frac{n-j}{n+j}\binom{n+j}{j}} = \frac{1}{2^k}\binom{n+k}{k}
\end{align}
holds for every $n\in\mathbb{N}$ and every $k\in\mathbb{N}_0$.\ \ To see this, we note that the left-hand side of \eqref{A17} can be written as a telescoping sum:
\begin{align*}
	\sum_{j=0}^k{\frac{1}{2^j}\frac{n-j}{n+j}\binom{n+j}{j}} &= \sum_{j=0}^k{\frac{1}{2^j}\left(1-\frac{2j}{n+j}\right)\binom{n+j}{j}} \\
	&= 1+\sum_{j=1}^k{\left[\frac{1}{2^j}\binom{n+j}{j}-\frac{1}{2^{j-1}}\binom{n+j-1}{j-1}\right]} \\
	&= \frac{1}{2^k}\binom{n+k}{k}.
\end{align*}
Now using \eqref{A17}, we have for every pair of integers $0<k<n$,
\begin{align*}
	&\ \ \ \,\sum_{j=0}^k{\left(-\frac{1}{2}\right)^{k-j}\frac{(n-1-j)!}{(n-1-k)!}C(n-1,j)} \\
	&=\sum_{j=0}^k{\left(-\frac{1}{2}\right)^{k-j}\frac{(n-1-j)!}{(n-1-k)!}\left(-\frac14\right)^j\frac{(n-2+j)!}{(n-2-j)!j!}} \\
	&=\frac{\left(-\frac{1}{4}\right)^{k}(n-1)!}{(n-1-k)!}\sum_{j=0}^k{2^{k-j}\frac{n-1-j}{n-1+j}\binom{n-1+j}{j}} \\
	&=\frac{\left(-\frac{1}{4}\right)^{k}(n-1)!}{(n-1-k)!}\binom{n-1+k}{k} =\left(-\frac14\right)^k\frac{(n-1+k)!}{(n-1-k)!k!}=C(n,k)
\end{align*}
which is \eqref{Cnumbers}.
\end{proof}

\noindent\textit{Proof of Theorem~\ref{WLeibniz}.}\ \ We prove \eqref{A15} by induction on $n$.\ \ \eqref{A15} is obviously true for $n=0$ and for $n=1$.\ \ Assuming that it is true for some $n\ge 1$ and noting that Lemma~\ref{commutator} implies that
\[
	\D_\W\A_\W^j = \sum_{l=0}^j{\binom{j}{l}\left(-\frac{1}{2}\right)^l l!\A_\W^{j-l}\D_\W^{l+1}}
\]
for every $j\in\mathbb{N}_0$, we also have
{\footnotesize
\begin{align*}
	&\ \ \ \,\D_\W^{n+1}(fg) \\
	&= \D_\W\D_\W^n(fg) \\
	&= \sum_{k=0}^n{C(n,k)\sum_{j=0}^{n-k}{\binom{n-k}{j}\D_\W(\A_\W^{n-k-j}\D_\W^{j+k}f\A_\W^j\D_\W^{n-j}g})} \\
	&= \sum_{k=0}^n{C(n,k)\sum_{j=0}^{n-k}{\binom{n-k}{j}\D_\W\A_\W^{n-k-j}\D_\W^{j+k}f\A_\W^{j+1}\D_\W^{n-j}g}} \\
	&\ \ \ \,+ \sum_{k=0}^n{C(n,k)\sum_{j=0}^{n-k}{\binom{n-k}{j}\A_\W^{n+1-k-j}\D_\W^{j+k}f\D_\W\A_\W^j\D_\W^{n-j}g}} \\
	&= \sum_{k=0}^n{C(n,k)\sum_{j=0}^{n-k}{\binom{n-k}{j}\left(\sum_{l=0}^{n-k-j}{\binom{n-k-j}{l}\left(-\frac{1}{2}\right)^l l! \A_\W^{n-k-j-l}\D_\W^{l+1}}\right)\D_\W^{j+k}f\A_\W^{j+1}\D_\W^{n-j}g}} \\
	&\ \ \ \,+ \sum_{k=0}^n{C(n,k)\sum_{j=0}^{n-k}{\binom{n-k}{j}\A_\W^{n+1-k-j}\D_\W^{j+k}f\left(\sum_{l=0}^j{\binom{j}{l}\left(-\frac{1}{2}\right)^l l!\A_\W^{j-l}\D_\W^{l+1}}\right)\D_\W^{n-j}g}} \\
	&= \sum_{k=0}^n{C(n,k)\sum_{p=1}^{n-k+1}\sum_{l=0}^{n-k-p+1}{\binom{n-k}{p-1}\binom{n-k-p+1}{l}\left(-\frac{1}{2}\right)^l l! \A_\W^{n-k-p-l+1}\D_\W^{p+k+l}f\A_\W^{p}\D_\W^{n-p+1}g}} \\
	&\ \ \ \,+ \sum_{k=0}^n{C(n,k)\sum_{j=0}^{n-k}\sum_{p=0}^j{\binom{n-k}{j}\binom{j}{j-p}\left(-\frac{1}{2}\right)^{j-p} (j-p)!\A_\W^{n+1-k-j}\D_\W^{j+k}f\A_\W^{p}\D_\W^{n-p+1}g}} \\
	&= \sum_{k=0}^n{C(n,k)\sum_{p=1}^{n-k+1}\sum_{j=p}^{n-k+1}{\binom{n-k}{p-1}\binom{n-k-p+1}{j-p}\left(-\frac{1}{2}\right)^{j-p} (j-p)! \A_\W^{n-k-j+1}\D_\W^{k+j}f\A_\W^{p}\D_\W^{n-p+1}g}} \\
	&\ \ \ \,+ \sum_{k=0}^n{C(n,k)\sum_{p=0}^{n-k}\sum_{j=p}^{n-k}{\binom{n-k}{j}\binom{j}{j-p}\left(-\frac{1}{2}\right)^{j-p} (j-p)!\A_\W^{n+1-k-j}\D_\W^{j+k}f\A_\W^{p}\D_\W^{n-p+1}g}}
\end{align*}
\begin{align*}
	&= \sum_{k=0}^n{C(n,k)\sum_{p=0}^{n-k+1}\sum_{j=p}^{n-k+1}{\binom{n-k}{p-1}\binom{n-k-p+1}{j-p}\left(-\frac{1}{2}\right)^{j-p} (j-p)! \A_\W^{n-k-j+1}\D_\W^{k+j}f\A_\W^{p}\D_\W^{n-p+1}g}} \\
	&\ \ \ \,+ \sum_{k=0}^n{C(n,k)\sum_{p=0}^{n-k+1}\sum_{j=p}^{n-k+1}{\binom{n-k}{j}\binom{j}{p}\left(-\frac{1}{2}\right)^{j-p} (j-p)!\A_\W^{n+1-k-j}\D_\W^{j+k}f\A_\W^{p}\D_\W^{n-p+1}g}}\\
	&= \sum_{k=0}^n{C(n,k)\sum_{p=0}^{n-k+1}\sum_{j=p}^{n-k+1}{\left(-\frac{1}{2}\right)^{j-p}\left[\binom{n-k}{p-1}\binom{n-k-p+1}{j-p} (j-p)! +\binom{n-k}{j}\binom{j}{p} (j-p)!\right]}} \\
	&\ \ \ \,\A_\W^{n+1-k-j}\D_\W^{j+k}f\A_\W^{p}\D_\W^{n-p+1}g \\
	&= \sum_{k=0}^n{C(n,k)\sum_{p=0}^{n-k+1}\sum_{j=p}^{n-k+1}{\left(-\frac{1}{2}\right)^{j-p}\frac{(n-k)!}{(n-k-j+p)!}\binom{n-k-j+p+1}{p}\A_\W^{n+1-k-j}\D_\W^{j+k}f\A_\W^{p}\D_\W^{n-p+1}g}} \\
	&= \sum_{k=0}^n{C(n,k)\sum_{p=0}^{n-k+1}\sum_{m=k}^{n-p+1}{\left(-\frac{1}{2}\right)^{m-k}\frac{(n-k)!}{(n-m)!}\binom{n-m+1}{p}\A_\W^{n+1-m-p}\D_\W^{m+p}f\A_\W^{p}\D_\W^{n-p+1}g}} \\
	&= \sum_{m=0}^{n+1}{\sum_{k=0}^{m}\sum_{p=0}^{n-m+1}{\left(-\frac{1}{2}\right)^{m-k}\frac{(n-k)!}{(n-m)!}C(n,k)\binom{n-m+1}{p}\A_\W^{n+1-m-p}\D_\W^{m+p}f\A_\W^{p}\D_\W^{n-p+1}g}} \\
	&= \sum_{m=0}^{n+1}{C(n+1,m)\sum_{p=0}^{n+1-m}{\binom{n+1-m}{p}\A_\W^{n+1-m-p}\D_\W^{p+m}f\A_\W^p\D_\W^{n+1-p}g}},
\end{align*}}%
where the last step follows from Lemma~\ref{Cnumrecur}.
\hfill\qed \\

There is a simpler recurrence formula generating the numbers $C(n,k)$ than \eqref{Cnumbers}, which is
\[
	C(n,k) = C(n-1,k) - \frac{n+k-2}{2}C(n-1,k-1)
\]
for every pair of integers $0<k<n$, in which only three terms in the array are involved.\ \ In fact, $C(n,k)$ is $(-\frac12)^k$ times the coefficient of $x^k$ in the Bessel polynomial $y_{n-1}$ of degree $n-1$ \cite{Grosswald}.\ \ Table 1 shows the numbers $C(n,k)$ for small values of $n$ and $k$.

\begin{table}[htp]
\begin{center}
\begin{tabular}{|c||c|c|c|c|c|c|}
\hline
\rule{0px}{10px}$C(n,k)$ & $k=0$ & $k=1$ & $k=2$ & $k=3$ & $k=4$ & $k=5$ \\
\hline
\hline
\rule{0px}{10px}$n=0$ & $1$ & & & & & \\
\hline
\rule{0px}{10px}$n=1$ & $1$ & $0$ & & & & \\
\hline
\rule{0px}{10px}$n=2$ & $1$ & $-\frac{1}{2}$ & $0$ & & & \\
\hline
\rule{0px}{10px}$n=3$ & $1$ & $-\frac{3}{2}$ & $\frac{3}{4}$ & $0$ & & \\
\hline
\rule{0px}{10px}$n=4$ & $1$ & $-3$ & $\frac{15}{4}$ & $-\frac{15}{8}$ & $0$ & \\
\hline
\rule{0px}{10px}$n=5$ & $1$ & $-5$ & $\frac{45}{4}$ & $-\frac{105}{8}$ & $\frac{105}{16}$ & $0$ \\
\hline
\end{tabular}
\end{center}
\caption{Values of $C(n,k)$ for small $n$ and $k$}
\end{table}

\pagebreak
Now we turn to some analytic properties of the Wilson operator.\ \ We can check easily from the definition of the Wilson operator $\D_\W$ that it sends polynomials to polynomials.\ \ In fact, we have the following.
\begin{proposition}
\label{Mero}
\cite[Proposition 2.3]{Cheng-Chiang}
	Let $f$ be a complex function.\ \ Then
	\begin{enumerate}[(i)]
	\item if $f$ is entire, then $\D_\W f$ and $\A_\W f$ are also entire;
	\item if $f$ is meromorphic, then $\D_\W f$ is also meromorphic; and
	\item if $f$ is rational, then $\D_\W f$ is also rational.
	\end{enumerate}
\end{proposition}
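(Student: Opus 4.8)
The plan is to eliminate the square root entirely by passing to the auxiliary function $g(z) := f\big((z + \tfrac{i}{2})^2\big)$ in the variable $z$. Writing $z = \sqrt{x}$ for either branch value, one has $x^+ = (z+\tfrac i2)^2$ and $x^- = (z - \tfrac i2)^2 = (-z + \tfrac i2)^2$, so that $f(x^+) = g(z)$, $f(x^-) = g(-z)$ and $x^+ - x^- = 2iz$; hence $(\D_\W f)(x) = \dfrac{g(z) - g(-z)}{2iz}$ and $(\A_\W f)(x) = \dfrac{g(z) + g(-z)}{2}$. These right-hand sides are visibly invariant under $z \mapsto -z$, which simultaneously re-confirms that $\D_\W f$ and $\A_\W f$ do not depend on the branch of $\sqrt{\cdot}$ and shows that, as functions of $z$, they are \emph{even}. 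So the whole statement reduces to the principle that an even holomorphic (resp.\ meromorphic, rational) function of $z$ is a holomorphic (resp.\ meromorphic, rational) function of $x = z^2$.

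For part (i) I would argue as follows. If $f$ is entire then $g$ is entire, being a composition of entire functions; therefore $g(z) + g(-z)$ is an even entire function, while $g(z) - g(-z)$ is an odd entire function, hence vanishes at $z = 0$, so that $\dfrac{g(z) - g(-z)}{2iz}$ is again even and entire. An even entire function has a power series about $0$ containing only even powers of $z$, hence is an entire function of $z^2$; substituting $z^2 = x$ exhibits $\A_\W f$ and $\D_\W f$ as entire functions.

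For parts (ii) and (iii) the same template applies. If $f$ is meromorphic on $\mathbb{C}$, then $g$ is meromorphic on $\mathbb{C}$, being the precomposition of $f$ with the nonconstant polynomial $(z+\tfrac i2)^2$ (its poles form the polynomial-preimage of the discrete pole set of $f$, hence a discrete set); thus $\dfrac{g(z)-g(-z)}{2iz}$ is meromorphic and even in $z$. Away from $z = 0$ it descends through the locally biholomorphic map $z \mapsto z^2$ to a meromorphic function of $x$, and near $x = 0$ its Laurent expansion in $z$ has only even powers, so it is meromorphic in $x$ there as well; hence $\D_\W f$ is meromorphic. If $f$ is rational, the same computation makes $\dfrac{g(z)-g(-z)}{2iz}$ a rational even function of $z$, and any such function is a quotient of even polynomials, each a polynomial in $z^2$, so $\D_\W f$ is rational.

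The one place that genuinely needs care is the descent across the branch point $z = 0$: away from it, identifying even functions of $z$ with functions of $z^2$ is immediate from the local holomorphic inverse of $z \mapsto z^2$, but at $z = 0$ one must invoke the parity of the Taylor (or Laurent) coefficients. This is also precisely the mechanism that makes the choice of branch cut for $\sqrt{\cdot}$ in the definition of $\D_\W$ immaterial: all of the branch ambiguity has been absorbed into the passage $f \mapsto g$, and only the even combinations $g(z) \pm g(-z)$ enter the final formulas.
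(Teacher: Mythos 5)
Your proof is correct. Note that the paper itself gives no argument for this proposition --- it is quoted verbatim from \cite[Proposition 2.3]{Cheng-Chiang} --- so there is nothing internal to compare against; your reduction to the auxiliary function $g(z)=f\bigl((z+\tfrac i2)^2\bigr)$ and the descent of the even functions $\frac{g(z)\mp g(-z)}{(2iz)^{\mp 1}}$ through $x=z^2$ is exactly the mechanism the paper's own remark about branch-independence of $\D_\W$ and $\A_\W$ is gesturing at, and you handle the only delicate points (the removable singularity of the odd numerator at $z=0$, the even-Laurent-coefficient argument at the branch point, and the fact that an even rational function in lowest terms has even numerator and denominator) correctly.
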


\bigskip
After introducing the Wilson operator and some of its useful properties, we will turn our focus to a series expansion of entire functions in a polynomial basis based on the Wilson operator.
\begin{definition}
\label{Wbasis}
	Let $x_0\in\mathbb{C}$.\ \ Then the \textbf{\textit{Wilson basis}} $\{\tau_k(x;x_0):k\in\mathbb{N}_0\}$ is defined as $\tau_0(x;x_0):=1$, and
	\[
		\tau_k(x;x_0) := \prod_{j=0}^{k-1}{(x_0^{+(2j)}-x)} = \prod_{j=0}^{k-1}{[(z_0+ji)^2-x]}
	\]
for $k\in\mathbb{N}$, where $z_0:=\sqrt{x_0}$ with the branch of square root $\arg{z_0}\in(-\frac{\pi}{2},\frac{\pi}{2}]$ if $x_0\ne 0$.\ \ We emphasize that whenever the Wilson basis $\{\tau_k(x;x_0):k\in\mathbb{N}_0\}$ is mentioned, the symbol $z_0$ automatically takes the aforesaid meaning.
\end{definition}

Note that the choice of branch of square root in Definition~\ref{Wbasis} ensures that $2z_0i\notin\mathbb{N}$, so that $x_0,x_0^{++},x_0^{+(4)},\ldots$ are distinct points in $\mathbb{C}$.

\bigskip
The Wilson operator interacts with the Wilson basis in a similar way as the ordinary differential operator $\frac{d}{dx}$ does with the basis $\{(x-x_0)^k:k\in\mathbb{N}_0\}$.\ \ The following formula is the Wilson counterpart of the ordinary differentiation formula $\frac{d}{dx}x^k=kx^{k-1}$ for every positive integer $k$.\ \ Its $q$-analogue was discussed in \cite{Askey-Wilson}.
\begin{proposition}
\label{TauRule}
\cite[Proposition 1.9]{Cheng}
	For every $x_0\in\mathbb{C}$ and every $k\in\mathbb{N}$, we have
	\begin{align*}
		(\D_\W\tau_k)(x;x_0) = -k\tau_{k-1}(x;x_0^+).
	\end{align*}
\end{proposition}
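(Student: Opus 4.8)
The plan is to compute $(\D_\W\tau_k)(x;x_0)$ directly from the definition \eqref{A1}, factoring every factor of $\tau_k$ into linear pieces in $z:=\sqrt{x}$ and then exploiting the cancellation between two almost-identical products. Write $x^\pm=(z\pm\tfrac{i}{2})^2$, so that $x^+-x^-=2iz$. One first checks that the branch convention in Definition~\ref{Wbasis} forces $\sqrt{x_0^+}=z_0+\tfrac{i}{2}$ --- this is seen by splitting into the cases $\mathrm{Re}\,z_0>0$ and $\mathrm{Re}\,z_0=0$ (in the latter, $z_0=it$ with $t\ge 0$, so $z_0+\tfrac{i}{2}=i(t+\tfrac12)$ still lies in the chosen half-plane) --- and hence
\[
	\tau_{k-1}(x;x_0^+)=\prod_{j=0}^{k-2}\bigl[(z_0+\tfrac{i}{2}+ji)^2-x\bigr].
\]

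The key device is to introduce, for $j\in\mathbb{Z}$, the half-shifted quantities $v_j:=z_0+(j+\tfrac12)i$, so that $z_0+ji=v_{j-1}+\tfrac{i}{2}=v_j-\tfrac{i}{2}$. Factoring each factor of $\tau_k$ evaluated at $x^\pm$ as a difference of two squares then gives, for every $j$,
\[
	(z_0+ji)^2-x^+=(v_{j-1}-z)(v_j+z),\qquad (z_0+ji)^2-x^-=(v_j-z)(v_{j-1}+z),
\]
while at the same time $\tau_{k-1}(x;x_0^+)=\prod_{j=0}^{k-2}(v_j-z)(v_j+z)$. Multiplying the two identities above over $j=0,\dots,k-1$, the subproduct of the factors with $0\le j\le k-2$ is exactly $\tau_{k-1}(x;x_0^+)$, so only the factors at the extreme indices $j=-1$ and $j=k-1$ survive, and we obtain
\[
	\tau_k(x^+;x_0)=(v_{-1}-z)(v_{k-1}+z)\,\tau_{k-1}(x;x_0^+),\qquad \tau_k(x^-;x_0)=(v_{k-1}-z)(v_{-1}+z)\,\tau_{k-1}(x;x_0^+).
\]

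Subtracting these, the common factor $\tau_{k-1}(x;x_0^+)$ comes out, and the remaining bracket expands in one line to $(v_{-1}-z)(v_{k-1}+z)-(v_{k-1}-z)(v_{-1}+z)=2z(v_{-1}-v_{k-1})=-2kiz$, the last equality using $v_{-1}-v_{k-1}=-ki$. Dividing by $x^+-x^-=2iz$ then gives $(\D_\W\tau_k)(x;x_0)=-k\,\tau_{k-1}(x;x_0^+)$ for every $x\ne 0$; since $\D_\W$ maps polynomials to polynomials, both sides are polynomials in $x$ and the identity extends to $x=0$, and the same computation applies verbatim to $k=1$ (where $\prod_{j=0}^{k-2}$ is the empty product $1$). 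I do not expect a genuine obstacle here: the only points that need care are keeping the index shifts in the various products straight and pinning down the branch of $\sqrt{x_0^+}$ so that $\tau_{k-1}(\,\cdot\,;x_0^+)$ is really the polynomial named in the statement.
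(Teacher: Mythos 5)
Your proof is correct: the factorization $(z_0+ji)^2-x^{\pm}=(v_{j-1}\mp z)(v_j\pm z)$ is right, the telescoping of the middle factors leaves exactly $(v_{-1}-z)(v_{k-1}+z)$ and $(v_{k-1}-z)(v_{-1}+z)$, and the bracket does reduce to $-2kiz=-k(x^+-x^-)$; the branch check $\sqrt{x_0^+}=z_0+\tfrac{i}{2}$ under $\arg z_0\in(-\tfrac{\pi}{2},\tfrac{\pi}{2}]$ is also handled correctly in both cases. The paper itself gives no proof of this proposition --- it only cites Proposition 1.9 of the author's thesis --- so there is nothing to compare against, but your direct computation is the natural argument and is complete, including the extension to $x=0$ and the empty-product case $k=1$.
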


The following theorem from Gelfond's book \cite{Gelfond} implies that a Wilson series either converges uniformly on every compact subset of $\mathbb{C}$, or converges nowhere except at the points $x_0^{+(2)},x_0^{+(4)},x_0^{+(6)},\ldots$ where the series terminates.\ \ In other words, the domain of a function defined by a Wilson series is either ``all" (the whole $\mathbb{C}$) or ``nothing" (just some isolated points).
\begin{theorem}
\label{convergence0}
\cite[p. 172]{Gelfond} Let $\{a_n\}_{n\in\mathbb{N}_0}$ and $\{x_n\}_{n\in\mathbb{N}_0}$ be sequences of complex numbers such that
\[
	\sum_{k=0}^\infty\frac{1}{|x_k|}<+\infty.
\]
If the polynomial series
\[
	\sum_{k=0}^\infty{a_k(x-x_0)\cdots(x-x_{k-1})}
\]
converges at a point $a\in\mathbb{C}\setminus\{x_0,x_1,x_2,\ldots\}$, then it converges uniformly on every compact subset of $\mathbb{C}$.\ \ In particular, given $x_0\in\mathbb{C}$ and a sequence $\{a_n\}_{n\in\mathbb{N}_0}$ of complex numbers, the Wilson series
\[
	\sum_{k=0}^\infty{a_k\tau_k(x;x_0)}
\]
either converges nowhere except at the points $x_0,x_0^{++},x_0^{+(4)},\ldots$ or converges uniformly on every compact subset of $\mathbb{C}$.
\end{theorem}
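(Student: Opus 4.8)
The plan is to prove the general polynomial-series statement first and then read off the Wilson-series dichotomy as the special case. For that special case, observe that the interpolation nodes of a Wilson series are $x_0,x_0^{++},x_0^{+(4)},\ldots$, i.e. $x_0^{+(2j)}=(z_0+ji)^2$ in the notation of Definition~\ref{Wbasis}, so $|x_0^{+(2j)}|=|z_0+ji|^2=(\operatorname{Re}z_0)^2+(\operatorname{Im}z_0+j)^2$ grows quadratically in $j$; hence $\sum_j 1/|x_0^{+(2j)}|<\infty$ and the hypothesis of the general statement is met with $x_j=x_0^{+(2j)}$. Moreover at each node $x_0^{+(2m)}$ the factor $x_0^{+(2m)}-x$ appearing in $\tau_k(\cdot;x_0)$ for $k>m$ vanishes, so the Wilson series terminates and converges there automatically. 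Consequently either the Wilson series converges at some point outside the node set — and then the general statement yields uniform convergence on compacta — or it converges at no such point, which is the first alternative.

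For the general statement, write $P_k(x):=\prod_{j=0}^{k-1}(x-x_j)$ with $P_0\equiv 1$, and suppose $\sum_k a_kP_k(a)$ converges for some $a\notin\{x_j:j\ge 0\}$. Fix a compact $K\subset\mathbb{C}$ and put $R:=\sup_{x\in K}|x-a|$. Since $\sum_j 1/|x_j|<\infty$ forces $|x_j|\to\infty$, there is $j_0$ with $|a-x_j|\ge\tfrac12|x_j|$ for $j\ge j_0$, whence $\sum_j 1/|a-x_j|<\infty$ as well. Setting $Q_k(x):=P_k(x)/P_k(a)=\prod_{j=0}^{k-1}\bigl(1+\tfrac{x-a}{a-x_j}\bigr)$, the estimate $|1+w|\le e^{|w|}$ applied to the tail of the product (the initial finite product being bounded on $K$) furnishes a constant $C_K$ with $|Q_k(x)|\le C_K$ for all $k$ and all $x\in K$. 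Since $Q_{k+1}(x)-Q_k(x)=Q_k(x)\cdot\tfrac{x-a}{a-x_k}$, this also gives $\sum_k\sup_{x\in K}|Q_{k+1}(x)-Q_k(x)|\le C_KR\sum_k 1/|a-x_k|=:C_K'<\infty$.

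The last step is an Abel summation by parts. Let $r_k:=\sum_{l>k}a_lP_l(a)\to 0$, so that $a_kP_k(a)=r_{k-1}-r_k$; then for $n\ge m\ge 1$,
\[
	\sum_{k=m}^{n} a_kP_k(x)=\sum_{k=m}^{n}(r_{k-1}-r_k)Q_k(x)=r_{m-1}Q_m(x)-r_nQ_n(x)+\sum_{k=m}^{n-1}r_k\bigl(Q_{k+1}(x)-Q_k(x)\bigr).
\]
Given $\varepsilon>0$, pick $m_0$ with $|r_k|<\varepsilon$ for all $k\ge m_0-1$; then for $n\ge m\ge m_0$ the right-hand side is bounded in modulus by $\varepsilon(2C_K+C_K')$ uniformly for $x\in K$. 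Thus the partial sums of $\sum_k a_kP_k(x)$ are uniformly Cauchy on $K$, proving uniform convergence there. The main obstacle is exactly that the series at $a$ need only be conditionally convergent, so the naive termwise comparison of $|a_kP_k(x)|$ with $|a_kP_k(a)|$ does not close the argument; the summation-by-parts reduction, which plays the merely null tails $r_k$ against the absolutely summable increments $Q_{k+1}-Q_k$, is what circumvents this, and verifying the two auxiliary bounds on $Q_k$ and $Q_{k+1}-Q_k$ from $\sum 1/|x_j|<\infty$ is the technical heart.
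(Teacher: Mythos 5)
The paper does not contain a proof of this theorem: it is quoted verbatim from Gelfond's book (the citation \cite[p.~172]{Gelfond} is the ``proof''), so there is nothing internal to compare against. Your argument is correct and complete, and it is the classical one: the uniform bound $|Q_k(x)|\le C_K$ from $|1+w|\le e^{|w|}$ together with $\sum_j 1/|a-x_j|<\infty$ (which you correctly derive from $\sum_j 1/|x_j|<\infty$ via $|x_j|\to\infty$), the absolute summability of the increments $Q_{k+1}-Q_k=Q_k\cdot\frac{x-a}{a-x_k}$, and Abel summation playing the null tails $r_k$ against those increments yield the uniform Cauchy property on $K$. The specialization to the Wilson series is also handled correctly: up to the harmless sign $(-1)^k$, the nodes are $x_j=(z_0+ji)^2$ with $|x_j|=(\operatorname{Re}z_0)^2+(\operatorname{Im}z_0+j)^2$ growing quadratically, and the automatic (terminating) convergence at the nodes themselves is exactly what produces the first alternative of the stated dichotomy.
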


\bigskip
We next investigate the Wilson series expansion of an entire function.\ \ We will find out the coefficients in the expansion and look for a condition for uniform convergence of such a series on compact subsets of $\mathbb{C}$.
\begin{theorem}
\label{WSeries}
\textup{(Wilson series expansion)}
Let $x_0\in\mathbb{C}$ and let $f$ be an entire function satisfying
\begin{align}
	\label{A7} \limsup_{r\to\infty}{\frac{\ln^+ M(r;f)}{\sqrt{r}}} < 2\ln 2.
\end{align}
Then there exists a unique sequence of complex numbers $\{a_n\}_{n\in\mathbb{N}_0}$, given by
\begin{align}
	\label{A13} a_n=\frac{1}{n!}\sum_{j=0}^n{(-1)^{n-j}\binom{n}{j}\frac{1}{(-2z_0i+j)_j(-2z_0i+2j+1)_{n-j}}f(x_0^{+(2j)})},
\end{align}
such that the Wilson series
\[
	\sum_{k=0}^\infty{a_k\tau_k(x;x_0)}
\]
converges uniformly to $f$ on every compact subset of $\mathbb{C}$.
\end{theorem}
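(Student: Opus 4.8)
The plan is to follow the classical N\o rlund--Gelfond approach to interpolation series, viewing the Wilson series as a Newton-type series based at the nodes $x_j:=x_0^{+(2j)}=(z_0+ji)^2$. Writing $z_0=u+iv$ one has $|x_j|=u^2+(v+j)^2\sim j^2$, so $\sum_j 1/|x_j|<\infty$ and Theorem~\ref{convergence0} applies to the resulting series. Uniqueness comes first and is easy: if $\sum_k b_k\tau_k(\cdot;x_0)$ converges to $f$, then evaluating successively at $x_0,x_0^{++},x_0^{+(4)},\dots$ — where only finitely many terms survive because $\tau_k(x_j;x_0)=0$ for $j<k$, and the nodes are pairwise distinct since the branch convention forces $2z_0i\notin\mathbb N$ — yields a triangular nonsingular system that recursively determines the $b_k$. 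Solving this system amounts to identifying $b_n$ with $(-1)^n$ times the $n$-th divided difference of $f$ at $x_0,\dots,x_n$ (the sign reflecting that $\tau_k$ is built from the factors $x_0^{+(2j)}-x$ rather than $x-x_0^{+(2j)}$); using the factorization $x_j-x_l=-(j-l)(-2z_0i+j+l)$ to evaluate the products of node differences that appear then produces exactly the Pochhammer-symbol formula \eqref{A13}.

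For existence I would simply \emph{define} $a_n$ by \eqref{A13}, set $P_n(x):=\sum_{k=0}^n a_k\tau_k(x;x_0)$, and verify (by the same induction, using $\tau_k(x_j;x_0)=0$ for $j<k$) that $P_n$ is the unique polynomial of degree $\le n$ interpolating $f$ at $x_0,\dots,x_n$. The engine of the proof is then the Newton remainder formula
\[
f(x)-P_n(x)=\frac{\tau_{n+1}(x;x_0)}{2\pi i}\oint_{|w|=R_n}\frac{f(w)\,dw}{\tau_{n+1}(w;x_0)\,(w-x)},
\]
valid whenever the circle $|w|=R_n$ encloses $x$ together with $x_0,\dots,x_n$. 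This follows from the residue theorem applied to the integrand, whose poles are the simple zeros $x_0,\dots,x_n$ of $\tau_{n+1}(\cdot;x_0)$ and the point $w=x$, together with the Lagrange identity $\sum_{j=0}^n f(x_j)\big/\big(\tfrac{\partial\tau_{n+1}}{\partial x}(x_j;x_0)(x_j-x)\big)=-P_n(x)/\tau_{n+1}(x;x_0)$.

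The crux is to show that this remainder tends to $0$ uniformly on compact sets, and it is exactly here that \eqref{A7} enters. For $x$ in a compact set $K$, since $\sum_j|x|/|z_0+ji|^2<\infty$ the factors $\prod_j|1-x/(z_0+ji)^2|$ stay uniformly bounded, so $|\tau_{n+1}(x;x_0)|\le C_K\prod_{j=0}^n|z_0+ji|^2$, which by Stirling applied to $\prod_{j=1}^n(u^2+(v+j)^2)$ equals $(n!)^2$ up to a polynomial-in-$n$ factor. Choosing $R_n:=c\,n^2$ for a suitable constant $c>1$, for $|w|=R_n$ and $0\le j\le n$ one gets $|(z_0+ji)^2-w|\ge R_n-O(n^2)\ge(c-1)n^2(1+o(1))$, hence $\min_{|w|=R_n}|\tau_{n+1}(w;x_0)|\ge\big((c-1)n^2\big)^{n+1}(1+o(1))$ and $|w-x|\ge R_n/2$ for large $n$; meanwhile \eqref{A7} gives $\varepsilon>0$ with $M(R_n;f)\le\exp\big((2\ln2-\varepsilon)\sqrt c\,n\big)$ for large $n$. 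Substituting into the remainder formula and taking logarithms, the $2\ln(n!)=2n\ln n-2n+O(\ln n)$ from $\tau_{n+1}(x;x_0)$ is cancelled by the $-2n\ln n$ from $\big((c-1)n^2\big)^{n+1}$, leaving a bound $n\big[(2\ln2-\varepsilon)\sqrt c-\ln(c-1)-2\big]+O(\ln n)$; optimizing over $c$ drives the bracket below $0$ precisely because $2\ln2<\sqrt2$, so the remainder decays geometrically, uniformly for $x\in K$. The main difficulty is therefore not conceptual but this quantitative balancing act — pinning down the Stirling asymptotics of $\tau_{n+1}$ and selecting $R_n$ so that its factorial-type growth is outrun by the hypothesis \eqref{A7}; once a single point of convergence is secured, Theorem~\ref{convergence0} would in any case upgrade it to uniform convergence on compacta.
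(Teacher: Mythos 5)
Your proposal is correct and follows essentially the same route as the paper: your ``Newton remainder formula'' is exactly the paper's expansion \eqref{A8} of the Cauchy kernel, the residue computation of the coefficients at the nodes $x_0^{+(2j)}$ is the same, and convergence of the remainder is obtained from Stirling-type asymptotics of $\tau_{n+1}$ on circles of radius $cn^2$ combined with \eqref{A7}. The only substantive difference is quantitative: you bound each factor $|(z_0+ji)^2-w|$ from below by $(c-1)n^2$ as a whole, whereas the paper splits it into the two linear factors $|z_0+ji\pm\sqrt{w}|$ and runs Stirling on the resulting Gamma quotients; your cruder bound still closes the argument (at $c=2$ it needs precisely $2\sqrt{2}\ln 2<2$, as you observe), while the paper's sharper estimate at radius $4n^2$ yields the factor $2^{-4n}$ that makes the constant $2\ln 2$ in \eqref{A7} appear as the natural threshold.
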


\begin{proof}
	For every pair of distinct points $x,y\in\mathbb{C}$, we have an expansion of the Cauchy kernel
	\[
		\frac{1}{y-x} = \frac{1}{y-x}\frac{\tau_{n+1}(x;x_0)}{\tau_{n+1}(y;x_0)} - \sum_{k=0}^n{\frac{\tau_k(x;x_0)}{\tau_{k+1}(y;x_0)}}
	\]
	for every $x_0\in\mathbb{C}$ and every $n\in\mathbb{N}$.
	
	Now given any compact set $K\subset\mathbb{C}$ and any $x\in K$, there exists $N\in\mathbb{N}$ such that the disk $D(0;4n^2)$ contains all the points $x,x_0,x_0^{++},x_0^{+(4)},\ldots,x_0^{+(2n)}$ for every $n\ge N$.\ \ So by Cauchy integral formula we have
	\begin{align}
	\label{A8}
	\begin{aligned}
		f(x) &= \frac{1}{2\pi i}\int_{\partial D(0;4n^2)}{\frac{f(y)}{y-x}\,dy} \\
		&= \frac{1}{2\pi i}\int_{\partial D(0;4n^2)}{\frac{f(y)\tau_{n+1}(x;x_0)}{(y-x)\tau_{n+1}(y;x_0)}\,dy} + \sum_{k=0}^n{a_k\tau_k(x;x_0)},
	\end{aligned}
	\end{align}
	where the coefficients $a_k$ are given, using Cauchy's residue theorem, by
	\begin{align*}
		a_k &= -\frac{1}{2\pi i}\int_{\partial D(0;4n^2)}{\frac{f(y)}{\tau_{k+1}(y;x_0)}\,dy} = \sum_{j=0}^k{\frac{f(x_0^{+(2j)})}{\prod_{\substack{l=0\\l\ne j}}^k{[(z_0+li)^2-(z_0+ji)^2]}}} \\
		&= \sum_{j=0}^k{\frac{(-1)^j}{j!(k-j)!}\frac{1}{\prod_{l=0}^{j-1}{[2z_0i-(j+l)]}\prod_{l=j+1}^k{[2z_0i-(j+l)]}}f(x_0^{+(2j)})} \\
		&= \frac{1}{k!}\sum_{j=0}^k{(-1)^{k-j}\binom{k}{j}\frac{1}{(-2z_0i+j)_j(-2z_0i+2j+1)_{k-j}}f(x_0^{+(2j)})}.
	\end{align*}
	
	It remains to show that the integral on the right-hand side of \eqref{A8} converges uniformly to $0$ on $K$ as $n\to\infty$.\ \ Note that for every $y\in\partial D(0;4n^2)$, we have $|z_0\pm\sqrt{y}|>n$ for every sufficiently large $n$.\ \ This implies that
	\begin{align}
	\label{A60}
	\begin{aligned}
		&\ \ \ \,\left|\frac{\tau_{n+1}(x;x_0)}{\tau_{n+1}(y;x_0)}\right| \\
		&= \left|\prod_{j=0}^n{\frac{(z_0+ji)^2-x}{(z_0+ji)^2-y}}\right| = \left|\prod_{j=0}^n{\frac{(z_0+ji-\sqrt{x})(z_0+ji+\sqrt{x})}{(z_0+ji-\sqrt{y})(z_0+ji+\sqrt{y})}}\right| \\
		&\le \prod_{j=0}^n{\frac{(|z_0-\sqrt{x}|+j)(|z_0+\sqrt{x}|+j)}{(|z_0-\sqrt{y}|-j)(|z_0+\sqrt{y}|-j)}} \\
		&= \frac{\Gamma(|z_0-\sqrt{x}|+n+1)\Gamma(|z_0+\sqrt{x}|+n+1)}{\Gamma(|z_0-\sqrt{x}|)\Gamma(|z_0+\sqrt{x}|)}\frac{\Gamma(|z_0-\sqrt{y}|-n)\Gamma(|z_0+\sqrt{y}|-n)}{\Gamma(|z_0-\sqrt{y}|+1)\Gamma(|z_0+\sqrt{y}|+1)} \\
		&= O\left(\frac{(n^{|z_0-\sqrt{x}|+n+\frac{1}{2}}e^{-n})(n^{|z_0+\sqrt{x}|+n+\frac{1}{2}}e^{-n})(n^{n+\frac{1}{2}}e^{-n})(n^{n+\frac{1}{2}}e^{-n})}{[(2n)^{2n-\frac{1}{2}}e^{-2n}][(2n)^{2n-\frac{1}{2}}e^{-2n}]}\right) \\
		&= O\left(\frac{n^{4\sqrt{N}+3}}{2^{4n}}\right)
	\end{aligned}
	\end{align}
	as $n\to\infty$, where in the second last step we have applied Stirling's approximation along the positive real axis.

	Now if \eqref{A7} holds, then there exists $\varepsilon\in(0,2\ln 2)$ such that $\displaystyle\frac{\ln^+ M(r;f)}{\sqrt{r}}\le2\ln 2-\varepsilon$ for every sufficiently large $r$, which implies that
	\begin{align}
	\label{A61}
		M(4n^2;f) \le \frac{2^{4n}}{e^{2\varepsilon n}}
	\end{align}
	for every sufficiently large $n$.\ \ \eqref{A60} and \eqref{A61} imply that the integral on the right-hand side of \eqref{A8} converges uniformly to $0$ on $K$ as $n\to\infty$.
	
	To show the uniqueness statement, we suppose that $\displaystyle\sum_{k=0}^\infty{a_k\tau_k(x;x_0)}$ converges uniformly to $0$ on compact subsets of $\mathbb{C}$ and aim to show that $a_n=0$ for every $n\in\mathbb{N}_0$.\ \ This follows easily by successively applying $\D_\W$ on both sides of $\displaystyle\sum_{k=0}^\infty{a_k\tau_k(x;x_0)}=0$.
\end{proof}

The uniqueness statement in Theorem~\ref{WSeries}, together with the fact that
\[
	a_n=\frac{(-1)^n}{n!}(\D_\W^n f)(x_0^{+(n)}),
\]
implies that a Wilson series can only represent functions that satisfy \eqref{A7}.\ \ Thus if $f$ is a non-constant entire function in $\ker\D_\W$, then
\[
	\limsup_{r\to\infty}{\frac{\ln M(r;f)}{\sqrt{r}}} \ge 2\ln 2,
\]
i.e. $f$ is of order at least $\frac{1}{2}$, and of type at least $2\ln 2$ in case the order is exactly $\frac{1}{2}$.

\bigskip
The following corollary relates the $n$th Wilson difference of $f$ at a point and the values taken by $f$ at the nearby interpolation points.\ \ The formula was first introduced by Cooper \cite{Cooper}.

\begin{corollary}
\label{WilsonExp}
Let $f$ be an entire function satisfying \eqref{A7}.\ \ Then at each point $x_0\in\mathbb{C}$, we have
\[
	(\D_\W^n f)(x_0^{+(n)}) = \sum_{j=0}^n{(-1)^j\binom{n}{j}\frac{1}{(-2z_0i+j)_j(-2z_0i+2j+1)_{n-j}}f(x_0^{+(2j)})}
\]
for every non-negative integer $n$.\ \ Replacing $z_0$ by $(z_0-\frac{ni}{2})$, we also have
\[
	(\D_\W^n f)(x_0) = (-1)^n\sum_{j=0}^n{\binom{n}{j}\frac{1}{(-2z_0i-n+j)_j(2z_0i-j)_{n-j}}f(x_0^{+(2j-n)})}
\]
for every non-negative integer $n$.
\end{corollary}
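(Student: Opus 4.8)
The plan is to obtain both identities as purely algebraic consequences of the Wilson series expansion, with no new analysis beyond Theorem~\ref{WSeries} and Proposition~\ref{TauRule}.

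I would first record the identity $a_n=\frac{(-1)^n}{n!}(\D_\W^n f)(x_0^{+(n)})$ mentioned right after Theorem~\ref{WSeries}. To see it, apply $\D_\W$ term by term to $f=\sum_{k\ge 0}a_k\tau_k(\cdot;x_0)$; this is legitimate since the partial sums converge uniformly on compact sets and $\D_\W$ is continuous with respect to such convergence, and by Proposition~\ref{TauRule} the result is again a Wilson series, now in the basis $\{\tau_k(\cdot;x_0^+)\}$. Iterating $m$ times gives
\[
  \D_\W^m f=\sum_{k\ge 0}(-1)^m\frac{(k+m)!}{k!}\,a_{k+m}\,\tau_k(\cdot;x_0^{+(m)}),
\]
and evaluating at $x=x_0^{+(m)}$ annihilates every term with $k\ge 1$, because the leading factor of $\tau_k(x_0^{+(m)};x_0^{+(m)})$ is zero; what remains is $(-1)^m m!\,a_m$. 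Feeding the explicit coefficients \eqref{A13} into $(\D_\W^n f)(x_0^{+(n)})=(-1)^n n!\,a_n$ and simplifying the sign via $(-1)^{2n-j}=(-1)^j$ yields the first displayed formula immediately.

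For the second formula, the idea is to apply the first one at the shifted base point $x_0^{-(n)}$, whose associated square root is $z_0-\frac{ni}{2}$ --- this is the meaning of "replacing $z_0$ by $z_0-\frac{ni}{2}$". Under this replacement, $(x_0^{-(n)})^{+(n)}=x_0$, $(x_0^{-(n)})^{+(2j)}=x_0^{+(2j-n)}$, and $-2z_0i$ becomes $-2z_0i-n$, so the first formula turns into a sum of $(-1)^j\binom{n}{j}f(x_0^{+(2j-n)})$ divided by $(-2z_0i-n+j)_j(-2z_0i-n+2j+1)_{n-j}$. To match this with the claimed expression I would invoke the Pochhammer reflection $(a)_m=(-1)^m(1-a-m)_m$ with $a=2z_0i-j$ and $m=n-j$, which rewrites $(-2z_0i-n+2j+1)_{n-j}=(-1)^{n-j}(2z_0i-j)_{n-j}$; the signs then collapse to $(-1)^j(-1)^{n-j}=(-1)^n$, giving precisely the stated formula. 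The only point of care in the whole argument is this sign-and-Pochhammer bookkeeping; as a side remark, $z_0-\frac{ni}{2}$ need not lie in the branch $\arg z_0\in(-\frac{\pi}{2},\frac{\pi}{2}]$ fixed in Definition~\ref{Wbasis}, but this is harmless because the right-hand side of the second formula is invariant under $z_0\mapsto-z_0$ (the relabelling $j\mapsto n-j$ permutes the summands), so it does not matter which square root of $x_0^{-(n)}$ is chosen.
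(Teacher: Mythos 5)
Your proposal is correct and follows exactly the route the paper intends (the paper gives no written proof, but the corollary is plainly meant to follow from \eqref{A13} together with the identity $a_n=\frac{(-1)^n}{n!}(\D_\W^n f)(x_0^{+(n)})$ stated just before it, and then the substitution $z_0\mapsto z_0-\frac{ni}{2}$). Your justification of that identity by term-by-term application of Proposition~\ref{TauRule}, the sign simplification, and the Pochhammer reflection $(a)_m=(-1)^m(1-a-m)_m$ are all accurate, and the branch remark is a sensible (and correct) extra precaution.
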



\begin{proposition}
\label{C}
For every pair of non-negative integers $k$ and $n$, we let
\[
	T(k,n):= \frac{(-1)^{n+k}}{n!}\left.\D_\W^n x^k\right|_{x=0^{+(n)}}.
\]
Then $T(0,0)=1$, $T(k,0)=0$ for all $k\in\mathbb{N}$, $T(k,n)=0$ for all non-negative integers $k$ and $n$ with $k<n$, and
\begin{align}
	\label{AA} T(k,n)=\sum_{j=1}^n{\frac{2(-1)^{n+j}j^{2k}}{(n-j)!(n+j)!}}
\end{align}
for every pair of positive integers $k$ and $n$.\ \ In particular, we have the following:
\begin{enumerate}[(i)]
\item $T(k,n)$ satisfy the recurrence
\[
	T(k,n) = T(k-1,n-1) + n^2 T(k-1,n)
\]
for every pair of positive integers $k$ and $n$.\ \ In particular, $T(k,n)\ge 0$ for every pair of non-negative integers $k$ and $n$.
\item There exists $K>0$ such that for every pair of positive integers $k$ and $n$, we have
\[
	T(k,n) \le \frac{Ke^{2n}}{n^{2n}}n^{2k}.
\]
\end{enumerate}
\end{proposition}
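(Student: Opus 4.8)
The plan is to settle the boundary assertions directly, then extract the closed form \eqref{AA} from Corollary~\ref{WilsonExp}, and finally read items (i) and (ii) off \eqref{AA}. The identities $T(0,0)=\frac1{0!}\cdot1=1$ and $T(k,0)=\frac{(-1)^k}{0!}\cdot0^k=0$ for $k\ge1$ are immediate, since $\D_\W^0$ is the identity and $0^{+(0)}=0$. For $T(k,n)=0$ when $k<n$, the point is that $\D_\W$ sends a polynomial of degree $d\ge1$ to one of strictly smaller degree and sends constants to $0$; this follows from Proposition~\ref{TauRule} together with the fact that $\{\tau_0(\cdot;x_0),\dots,\tau_m(\cdot;x_0)\}$ is a basis of the polynomials of degree $\le m$. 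Hence $\D_\W^n x^k\equiv0$ whenever $n>k$, so $T(k,n)=0$ for $k<n$.

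To obtain \eqref{AA} I would apply the first formula of Corollary~\ref{WilsonExp} to $f(x)=x^k$ with $k\ge1$ and $x_0=0$, so that $z_0=0$, the interpolation points are $x_0^{+(2j)}=(z_0+ji)^2=-j^2$, and $f(x_0^{+(2j)})=(-1)^kj^{2k}$. The $j=0$ summand vanishes because $f(0)=0$, and for $1\le j\le n$ the Pochhammer product simplifies to
\[
(-2z_0i+j)_j\,(-2z_0i+2j+1)_{n-j}=(j)_j\,(2j+1)_{n-j}=\frac{(2j-1)!}{(j-1)!}\cdot\frac{(n+j)!}{(2j)!}=\frac{(n+j)!}{2\,j!},
\]
which, together with $\binom nj\,j!=\frac{n!}{(n-j)!}$, gives $(\D_\W^n x^k)(0^{+(n)})=\sum_{j=1}^n(-1)^{j+k}\frac{2\,n!\,j^{2k}}{(n-j)!(n+j)!}$; multiplying by $\frac{(-1)^{n+k}}{n!}$ produces \eqref{AA}.

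For the recurrence in (i), the key step is to write $j^{2k}=j^{2(k-1)}\bigl[n^2-(n-j)(n+j)\bigr]$ inside \eqref{AA}: the $n^2$-part reproduces $n^2T(k-1,n)$, while in the remaining sum $\sum_{j=1}^n(-1)^{n+j}\frac{2\,j^{2(k-1)}(n-j)(n+j)}{(n-j)!(n+j)!}$ the $j=n$ term drops out and $\frac{(n-j)(n+j)}{(n-j)!(n+j)!}=\frac{1}{(n-1-j)!(n-1+j)!}$, so after shifting the index this sum becomes $-T(k-1,n-1)$. This gives $T(k,n)=T(k-1,n-1)+n^2T(k-1,n)$ for $k\ge2$, while the case $k=1$ is checked directly from $T(1,1)=1$ and the vanishing facts already established. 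Non-negativity then follows by induction on $k$: the base $k=0$ is clear, and for $k\ge1$ one uses $T(k,0)=0$ together with the recurrence, which for $n\ge1$ exhibits $T(k,n)$ as a sum of non-negative quantities.

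For the estimate (ii), I would bound \eqref{AA} termwise and use $j\le n$:
\[
0\le T(k,n)\le n^{2k}\sum_{j=1}^n\frac{2}{(n-j)!(n+j)!}=\frac{2\,n^{2k}}{(2n)!}\sum_{j=1}^n\binom{2n}{n-j}\le\frac{2^{2n+1}}{(2n)!}\,n^{2k},
\]
after which the elementary inequality $(2n)!\ge(2n/e)^{2n}$ converts the right-hand side into $\frac{2e^{2n}}{n^{2n}}\,n^{2k}$, so $K=2$ works. Overall the argument is mostly bookkeeping; the only genuinely clever step is the substitution $j^{2k}=j^{2(k-1)}[n^2-(n-j)(n+j)]$ behind the recurrence, and the main risk of error lies in tracking the signs and Pochhammer symbols in passing from Corollary~\ref{WilsonExp} to \eqref{AA}.
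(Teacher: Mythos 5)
Your proposal is correct and follows essentially the same route as the paper: \eqref{AA} is obtained by specializing Corollary~\ref{WilsonExp} to $f(x)=x^k$ at $x_0=0$, the recurrence in (i) is read off from \eqref{AA}, and the bound in (ii) comes from $\sum_{j=1}^n\binom{2n}{n+j}\le 2^{2n}$ together with Stirling's approximation for $(2n)!$. You merely supply details the paper leaves implicit (the boundary cases, the explicit Pochhammer simplification, and the substitution $j^{2k}=j^{2(k-1)}[n^2-(n-j)(n+j)]$ behind the recurrence), and these are all carried out correctly.
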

\begin{proof}
(i) follows immediately from \eqref{AA}, which is a direct consequence of Corollary~\ref{WilsonExp} applied to the function $f(x)=x^k$ and the point $x_0=0$.\ \ To prove (ii), we note that \eqref{AA} implies that
\begin{align*}
	\frac{T(k,n)}{\frac{2}{(2n)!}n^{2k}} &= \sum_{j=1}^n{(-1)^{n+j}\binom{2n}{n+j}\left(\frac{j}{n}\right)^{2k}} \le \sum_{j=1}^n{\binom{2n}{n+j}} \le 2^{2n}
\end{align*}
for every pair of positive integers $k$ and $n$, so the estimate follows from Stirling's approximation.
\end{proof}

The numbers $T(k,n)$ in Proposition~\ref{C} are called the Carlitz-Riordan \textit{central factorial numbers} or the \textit{Chebyshev-Stirling numbers of the second kind}.\ \ In fact $T(k,n)$ is the number of partitions of the set $\{1,1',2,2',...,k,k'\}$ into $n$ disjoint nonempty subsets $V_1,\ldots,V_n$ such that, for each $1\le j\le n$, if $i$ is the smallest integer such that either $i\in V_j$ or $i'\in V_j$ then $\{i,i'\}\subseteq V_j$.\ \ Therefore $T(k,n)$ can be regarded as a ``two-colored" version of the Stirling numbers of the second kind.\ \ These numbers are first investigated by Carlitz and Riordan in \cite{CR}, and their properties and asymptotics are studied in \cite{Charalambides}, \cite{GLN}, \cite{BSSV}, etc.\ \ Matsumoto and Novak \cite{MN} gave another combinatorial interpretation for these numbers, which is the number of primitive factorizations of a full cycle.

\bigskip
After studying the Askey-Wilson series expansion of entire functions, we will develop a Wiman-Valiron theory for this series expansion in the following sections.

\section{Main results}
\label{sec:Mainresults}

We have seen in Theorem~\ref{WSeries} that an entire function of order smaller than $\frac{1}{2}$ has a Wilson series expansion at the point $x_0=0$, which converges uniformly to itself on compact subsets of $\mathbb{C}$.\ \ Such a Wilson series expansion must converge at each positive real number $r$ in particular, so we are able to make the following definition.

\begin{definition}
\label{Wilsonmunu}
Let $f\not\equiv0$ be an entire function of order smaller than $\frac{1}{2}$ with Wilson series expansion $\displaystyle f(x)=\sum_{k=0}^\infty{a_k\tau_k(x;0)}$.\ \ The \textbf{\textit{Wilson maximal term}} and \textbf{\textit{Wilson central index}} of $f$ are respectively the functions $\mu_\W(\cdot;f):(0,+\infty)\to(0,+\infty)$ and $\nu_\W(\cdot;f):(0,+\infty)\to\mathbb{N}_0$ defined by
\begin{align*}
	\mu_\W(r;f)&:= \max_{n\in\mathbb{N}_0}{\max_{x\in\partial D(0;r)}|a_n\tau_n(x;0)|} =\max_{n\in\mathbb{N}_0}{|a_n|r(r+1^2)\cdots(r+(n-1)^2)}
\end{align*}
and
\[
	\nu_\W(r;f):= \max\{n\in\mathbb{N}_0:|a_n|r(r+1^2)\cdots(r+(n-1)^2)=\mu_\W(r;f)\}.
\]
\end{definition}

Here we only focus on Wilson series expansions at $x_0=0$ for simplicity.\ \ If we consider Wilson series expansions at any $x_0\in\mathbb{C}$, then
\[
	\mu_\W(r;f):= \max_{n\in\mathbb{N}_0}{\max_{x\in\partial D(x_0;r)}|a_n\tau_n(x;x_0)|}
\]
will lie between $\displaystyle\max_{n}|a_n|\prod_{k=0}^{n-1}(r+k^2-2k|z_0|)$ and $\displaystyle\max_{n}|a_n|\prod_{k=0}^{n-1}(r+k^2+2k|z_0|)$ for every $r>0$, and the analysis will be similar.

\bigskip
The following is the main theorem of this paper, which is about an entire function of order smaller than $\frac13$.\ \ In the case $h=0$, it says that in the Wilson series expansion of such an entire function, the terms that are far away from the maximal term are small outside a small exceptional set.\ \ In other words, the local behavior of such an entire function is mainly contributed by those terms in its Wilson series expansion that are near the maximal term.

\begin{theorem}
\label{tail}
Let $\displaystyle f(x)=\sum_{k=0}^\infty{a_k \tau_k(x;0)}$ be a transcendental entire function of order $\sigma<\frac13$, let $\gamma\in(3,\frac{1}{\sigma})$, and let $\delta>0$.\ \ Then there exists a set $E\subset[1,\infty)$ of finite logarithmic measure\footnote{The \textbf{\textit{logarithmic measure}} of a set $E\subseteq[1,\infty)$ is defined by $\logmea E:=\int_E\frac{1}{x}\,dm$, where $m$ is the Lebesgue measure on $\mathbb{R}$.} such that for every $h\in\mathbb{R}$, $\beta>0$ and $\omega\in(0,\beta)$, we have
\[
	\sum_{k:|k-N|\ge \kappa}{k^h|a_k\tau_k(r;0)|} = o(\mu_\W(r;f)N^h b(N)^{\frac{\omega-1}{2}})
\]
as $r\to\infty$ and $r\in(0,\infty)\setminus E$, where $N=\nu_\W(r;f)$, $b(N):=\frac{1}{N\ln N (\ln\ln N)^{1+\delta}}$ and $\kappa=\left[\sqrt{\frac{\beta}{b(N)}\ln\frac{1}{b(N)}}\right]$.
\end{theorem}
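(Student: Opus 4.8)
\emph{Proof proposal.} The plan is to work with the nonnegative quantities $c_k(r):=|a_k\tau_k(r;0)|=|a_k|\prod_{j=0}^{k-1}(r+j^2)$, so that $\mu_\W(r;f)=c_N(r)=\max_k c_k(r)$ with $N=\nu_\W(r)$, and to reduce the theorem to a sharp bound on $c_k(r)/\mu_\W(r)$ for $|k-N|\ge\kappa$. The workhorse will be the elementary comparison
\[
	\log c_k(r_1)=\log c_k(r_2)-\sum_{j=0}^{k-1}\log\frac{r_2+j^2}{r_1+j^2}\le\log\mu_\W(r_2)-\sum_{j=0}^{k-1}\log\frac{r_2+j^2}{r_1+j^2},
\]
valid for all $r_1,r_2>0$, used together with the properties of $\mu_\W(\cdot;f)$ and $\nu_\W(\cdot;f)$ from \S\ref{sec:Propmunu}: $\nu_\W$ is nondecreasing with $\nu_\W(r)\to\infty$; $\log\mu_\W$ is absolutely continuous with $\frac{d}{dr}\log\mu_\W(r)=\sum_{j=0}^{\nu_\W(r)-1}\frac1{r+j^2}$ a.e.; and $\nu_\W(r)\le r^{\sigma+o(1)}$ (from Theorem~\ref{WLP}), so that, since $\gamma<1/\sigma$, one has $\nu_\W(r)\le r^{1/\gamma}$ for all large $r$, i.e. $r\ge N^{\gamma}$.

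Next I would fix the exceptional set. Put $W(n):=1/b(n)=n\ln n(\ln\ln n)^{1+\delta}$, and apply a Borel--Nevanlinna-type growth lemma to the nondecreasing $\nu_\W$ at the scale $W$: there is $E\subset[1,\infty)$ with $\logmea E<\infty$ such that, for $r\notin E$, one has $r\ge N^{\gamma}$ and the slow-growth bound $\nu_\W(\rho)\le N+W(N)\ln(\rho/r)$ for $\rho\ge r$; equivalently, writing $r_i$ for the least radius with $\nu_\W\ge i$, one has $\ln\frac{r_i}{r}\ge\frac{i-N}{W(N)}$ for $i\ge N$. Finiteness of $\logmea E$ is exactly where the exponent $1+\delta$ is used: after the substitution $n=\nu_\W(r)$ the bad set has logarithmic measure at most $\sum_n\big(n\ln n(\ln\ln n)^{1+\delta}\big)^{-1}<\infty$. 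Since $E$ depends only on $f,\gamma,\delta$, it will serve uniformly for all $h,\beta,\omega$.

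The heart is the sharp Gaussian estimate, valid for $r\notin E$ as $r\to\infty$, uniformly for $N/2\le k\le 2N$:
\[
	c_k(r)\le\mu_\W(r)\exp\!\Big(-\tfrac12\,b(N)\,(k-N)^2\,(1+o(1))\Big).
\]
For $k>N$, put $r_2=r_k$ in the comparison above (this choice minimises the right-hand side over $r_2$): integrating the a.e.\ identity $\frac{d}{d\rho}\log\mu_\W(\rho)-\sum_{j<k}\frac1{\rho+j^2}=-\sum_{j=\nu_\W(\rho)}^{k-1}\frac1{\rho+j^2}\le0$ over $[r,r_k]$ gives $\log\frac{c_k(r)}{\mu_\W(r)}\le-\int_r^{r_k}\sum_{j=\nu_\W(\rho)}^{k-1}\frac1{\rho+j^2}\,d\rho$; since $\rho\ge r\ge N^{\gamma}\gg k^2$ one has $\sum_{j=\nu_\W(\rho)}^{k-1}\frac1{\rho+j^2}=\frac{k-\nu_\W(\rho)}{\rho}(1+o(1))$, hence $\log\frac{c_k(r)}{\mu_\W(r)}\le-(1+o(1))\int_{\ln r}^{\ln r_k}\!\big(k-\nu_\W(e^t)\big)\,dt=-(1+o(1))\sum_{i=N+1}^{k}\ln\frac{r_i}{r}\le-(1+o(1))\frac{(k-N)^2}{2W(N)}$ by the slow-growth bound. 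A symmetric argument, comparing at a radius below $r$, handles $k<N$. Here the constant $\tfrac12$ is merely the leading coefficient of the quadratic penalty; the real difficulty — and the step I expect to be the main obstacle — is to show that the $(1+o(1))$ is in fact $o(1/\ln N)$, so that it survives exponentiation against $\ln\frac1{b(N)}\asymp\ln N$. This is exactly where the margins in the hypotheses are consumed: $\gamma>3$ makes $k^2/r\le N^{2-\gamma}$ as small as $N^{-1}$ on the relevant range, and the factor $(\ln\ln N)^{1+\delta}$ makes $N/W(N)=O\big(1/(\ln N(\ln\ln N)^{1+\delta})\big)$; both are comfortably $o(1/\ln N)$.

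Finally I would assemble the sum. The far ranges contribute negligibly: for $k<N/2$ the Gaussian estimate with $|k-N|>N/2$ gives $c_k(r)/\mu_\W(r)\le\exp(-cN/\ln N)$, while for $k>2N$ the same bound $\log\frac{c_k(r)}{\mu_\W(r)}\le-(1+o(1))\sum_{i=N+1}^{k}\ln\frac{r_i}{r}$, using $r_i\ge i^{\gamma}$ (from $\gamma<1/\sigma$), is both $\le\exp(-cN/\ln N)$ and decays super-polynomially in $k$; so $\sum_{k<N/2}k^h c_k(r)+\sum_{k>2N}k^h c_k(r)=o\big(\mu_\W(r)N^h b(N)^{(\omega-1)/2}\big)$ whatever $h$ is. On the range $N/2\le k\le 2N$ one has $k^h=O_h(N^h)$, so by the Gaussian estimate
\[
	\sum_{k:\,N/2\le k\le 2N,\ |k-N|\ge\kappa}k^h c_k(r)=O_h\!\Big(N^h\mu_\W(r)\sum_{m\ge\kappa}e^{-\tfrac12 b(N)m^2(1+o(1))}\Big)=O_h\!\Big(N^h\mu_\W(r)\,\frac{e^{-\tfrac12 b(N)\kappa^2(1+o(1))}}{1-e^{-b(N)\kappa(1+o(1))}}\Big),
\]
and inserting $\kappa=\big[\sqrt{\tfrac{\beta}{b(N)}\ln\tfrac1{b(N)}}\,\big]$ — which makes $b(N)\kappa^2=\beta\ln\tfrac1{b(N)}(1+o(1))$ and $b(N)\kappa=\sqrt{\beta\,b(N)\ln\tfrac1{b(N)}}\to0$ — the last fraction equals $\dfrac{b(N)^{\beta/2+o(1)}}{b(N)\kappa}=\dfrac{b(N)^{(\beta-1)/2+o(1)}}{\sqrt{\beta\ln(1/b(N))}}$, which is $o\big(b(N)^{(\omega-1)/2}\big)$ precisely because $\omega<\beta$. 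Adding the three contributions yields the theorem.
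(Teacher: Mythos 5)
Your overall architecture is sound, and your final assembly --- the split at $(1\pm\eta)N$, the Gaussian tail sum, and the computation showing that $\kappa=\bigl[\sqrt{\tfrac{\beta}{b(N)}\ln\tfrac1{b(N)}}\bigr]$ together with $\omega<\beta$ yields $o\bigl(b(N)^{\frac{\omega-1}{2}}\bigr)$ --- matches the paper's proof of Theorem~\ref{tail} almost verbatim. Your route to the pointwise Gaussian bound, via the a.e.\ identity $\frac{d}{d\rho}\ln\mu_\W(\rho)=\sum_{j<\nu_\W(\rho)}\frac{1}{\rho+j^2}$ and the jump radii $r_i$, is a legitimate and arguably more transparent alternative to the paper's machinery (Lemma~\ref{WVestimate}). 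Incidentally, the step you single out as the main obstacle is not one: you do not need the $(1+o(1))$ in the exponent to be $o(1/\ln N)$, because a multiplicative error $1+\epsilon_N$ there only costs a factor $b(N)^{O(\beta\epsilon_N)}$, which the strict inequality $\omega<\beta$ absorbs; this is exactly how the paper handles its $\varepsilon$.

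The genuine gap is the ``Borel--Nevanlinna-type growth lemma''. As stated --- $\nu_\W(\rho)\le N+W(N)\ln(\rho/r)$ for \emph{all} $\rho\ge r$ --- it is false for functions of positive order: taking $\rho=r^2$ pits $\nu_\W(\rho)\asymp r^{2\sigma}$ against $N+W(N)\ln r\asymp r^{\sigma}(\ln r)^2(\ln\ln r)^{1+\delta}$, and for a function whose central index grows regularly like $r^{\sigma}$ this fails for every large $r$, so no exceptional set of finite logarithmic measure can rescue it. You only invoke the bound for $N<i\le 2N$, where it is correct, but proving that restricted version outside a set of finite logarithmic measure is precisely the technical heart of the theorem --- it is what Definition~\ref{testseq}, Definition~\ref{taunormal} and Theorem~\ref{tauexceptional} accomplish via K\"{o}vari's comparison sequences and the auxiliary series $F(x)=\sum_k(|a_k|/\alpha_k)\tau_k(x;0)$ --- and your one-line measure heuristic does not constitute a proof: the naive union bound over pairs $(N,i)$ gives $\sum_N\sum_{i=N+1}^{2N}\frac{i-N}{W(N)}\asymp\sum_N\frac{N}{\ln N(\ln\ln N)^{1+\delta}}=\infty$, and getting down to $\sum_n b(n)<\infty$ requires the covering and telescoping argument that the paper anchors at the jump radii of the auxiliary function $F$ and the bounded increasing sequence $\{\rho_n\}$. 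Two smaller repairs are also needed: for $k>2N$ the approximation $\sum_{j=\nu_\W(\rho)}^{k-1}\frac{1}{\rho+j^2}=\frac{k-\nu_\W(\rho)}{\rho}(1+o(1))$ breaks down once $k^2\gtrsim\rho$, so that range needs a separate argument (the paper's Lemma~\ref{WVestimate} covers all $k\ge1$ uniformly, with $b(N+k)$ rather than $b(N)$ above the central index, which is what makes part (i) of the paper's proof go through); and for $h<0$ the term $k=0$ of the sum needs a convention.
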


Applying Theorem~\ref{tail}, we obtain the following asymptotic behavior of successive Wilson differences of a transcendental entire function of order smaller than $\frac13$.

\begin{theorem}
\label{WVmain}
Let $\displaystyle f(x)=\sum_{k=0}^\infty{a_k \tau_k(x;0)}$ be a transcendental entire function of order $\sigma<\frac13$, let $\gamma\in(3,\frac{1}{\sigma})$, and let $\delta>0$.\ \ Then there exists a set $E\subset[1,\infty)$ of finite logarithmic measure such that for every $n\in\mathbb{N}$, we have
\[
	\left(\frac{x}{N}\right)^n(\D_\W^n f)(x) = f(x) + O\left(\frac{\kappa}{N}\right)M(r;f)
\]
as $r\to\infty$ and $r\in(0,\infty)\setminus E$, where in the above formula $r=|x|$, $N=\nu_\W(r;f)$ and $\kappa=\left[\sqrt{N(\ln N)^2(\ln\ln N)^{1+\delta}}\right]$.
\end{theorem}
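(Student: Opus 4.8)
The plan is to expand $\D_\W^n f$ in the Wilson basis, discard the terms far from the Wilson central index by Theorem~\ref{tail}, and handle the remaining terms by a direct asymptotic comparison with the Wilson series of $f$ itself. First, since $0^{+(n)}=-\tfrac{n^2}{4}$, iterating Proposition~\ref{TauRule} gives $\D_\W^n\tau_k(\cdot\,;0)=(-1)^n\tfrac{k!}{(k-n)!}\,\tau_{k-n}(\cdot\,;0^{+(n)})$ for $k\ge n$ and $\D_\W^n\tau_k(\cdot\,;0)=0$ for $k<n$; as $\D_\W$ commutes with locally uniform limits and the differentiated series is again a Wilson series (hence locally uniformly convergent by Theorem~\ref{convergence0}), one obtains
\[
(\D_\W^n f)(x)=\sum_{k\ge n}(-1)^n\frac{k!}{(k-n)!}\,a_k\,\tau_{k-n}(x;0^{+(n)}).
\]
Fix $x$ with $|x|=r$, put $N=\nu_\W(r;f)$, and split this sum, along with $f(x)=\sum_k a_k\tau_k(x;0)$, at $|k-N|=\kappa$.

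For the far terms I would use the elementary bound $|\tau_{k-n}(x;0^{+(n)})|\le r^{-n}\,|\tau_k(r;0)|$, valid for $|x|=r$ and $k\ge n$: the factors $\big(\tfrac n2+j\big)^2$, $0\le j\le k-n-1$, of $\tau_{k-n}(\cdot\,;0^{+(n)})$ satisfy $\big(\tfrac n2+j\big)^2\le(n+j)^2$, so $|\tau_{k-n}(x;0^{+(n)})|\le\prod_{l=n}^{k-1}(r+l^2)=|\tau_k(r;0)|\big/\prod_{l=0}^{n-1}(r+l^2)\le r^{-n}|\tau_k(r;0)|$. Together with $\tfrac{k!}{(k-n)!}\le k^n$ this bounds the contribution of $|k-N|\ge\kappa$ to $\big(\tfrac xN\big)^n(\D_\W^n f)(x)$ by $N^{-n}\sum_{|k-N|\ge\kappa}k^n|a_k\tau_k(r;0)|$, which by Theorem~\ref{tail} (applied with $h=n$, with some $\beta<1$ and $\omega$ near $\beta$, so that the $\kappa$ occurring there is $\le$ the $\kappa$ of our statement) is $o\big(\mu_\W(r;f)\,b(N)^{(\omega-1)/2}\big)$ for $r$ outside the set $E$ of finite logarithmic measure furnished by that theorem; the far part of $f$ is controlled the same way with $h=0$.

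The core of the proof is the near terms. I would show that, uniformly for $|k-N|<\kappa$ and for large $r\notin E$,
\[
\left(\frac xN\right)^n(-1)^n\frac{k!}{(k-n)!}\,\tau_{k-n}(x;0^{+(n)})=\tau_k(x;0)\Big(1+O\big(\tfrac\kappa N\big)\Big).
\]
Indeed, after cancelling the powers of $x$, the quotient of the two sides equals $\dfrac{1}{N^n}\dfrac{k!}{(k-n)!}\cdot\dfrac{\prod_{j=0}^{k-n-1}\big(1+(\tfrac n2+j)^2/x\big)}{\prod_{l=0}^{k-1}\big(1+l^2/x\big)}$; the first factor is $\prod_{i=0}^{n-1}\big(1+\tfrac{k-N-i}{N}\big)=1+O(\kappa/N)$, and, passing to logarithms (legitimate since all of $l^2/x$ with $0\le l<k$ are $o(1)$ here), the two products differ by $\tfrac1x\big(\sum_{j}(\tfrac n2+j)^2-\sum_{l}l^2\big)+O(N^5/r^2)=O(N^2/r)$. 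This is exactly where $\sigma<\tfrac13$ enters: from \S\ref{sec:Propmunu} one has $N=\nu_\W(r;f)=O(r^{\sigma+\varepsilon})$, hence $N^3=o(r)$ and $O(N^2/r)=o(1/N)$, so the quotient is $1+O(\kappa/N)$. (For odd $n$ the arguments of $\tau_{k-n}(\cdot\,;0^{+(n)})$ are half-integers; this perturbs only the lower-order terms above.) Summing over $|k-N|<\kappa$ and reinserting the splitting of $f$, the near part equals $f(x)+O\big(\tfrac\kappa N\sum_{|k-N|<\kappa}|a_k\tau_k(r;0)|\big)+o\big(\mu_\W(r;f)\,b(N)^{(\omega-1)/2}\big)$.

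It remains to collect the errors and recast them in terms of $M(r;f)$. Using $\sum_{|k-N|<\kappa}|a_k\tau_k(r;0)|\le 2\kappa\,\mu_\W(r;f)$, $\kappa^2/N\asymp(\ln N)^2(\ln\ln N)^{1+\delta}$, and letting $\omega\uparrow\beta$ to absorb $b(N)^{(\omega-1)/2}$, every error term is $O(\mu_\W(r;f))$ times a fixed power of $\log N$. To convert this into $O(\kappa/N)M(r;f)$ I would invoke from \S\ref{sec:Propmunu} a lower bound for $M(r;f)$ in terms of $\mu_\W(r;f)$ and $N$ --- roughly $M(r;f)\gg\sqrt N\,\mu_\W(r;f)$ outside a further exceptional set of finite logarithmic measure --- which turns $\mu_\W(r;f)$ times a power of $\log N$ into $O(\kappa/N)M(r;f)$; adjoining that set to $E$ completes the argument, with $\kappa$ of order $\sqrt{N(\ln N)^2(\ln\ln N)^{1+\delta}}$ as stated. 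I expect the hard part to be the term-by-term asymptotic of the third step: obtaining the sharp relative error $O(\kappa/N)$, in particular verifying that the comparison of the two long products genuinely collapses under $N^3=o(r)$ and dealing cleanly with the parity mismatch introduced by the base-point shift $0\mapsto 0^{+(n)}$, is the step that forces the order restriction and demands the most care.
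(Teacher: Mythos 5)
Your reduction of the far terms to Theorem~\ref{tail} and your term-by-term asymptotic
$\left(\frac{x}{N}\right)^n(-1)^n\frac{k!}{(k-n)!}\tau_{k-n}(x;0^{+(n)})=\tau_k(x;0)\bigl(1+O(\kappa/N)\bigr)$
are both essentially sound (the product comparison does collapse under $N^{\gamma}\le r$ with $\gamma>3$, and the parity shift is harmless). But the near-term step has a genuine gap that parameter-tuning cannot repair. Multiplying each near term by $1+\epsilon_k(x)$ with $\epsilon_k=O(\kappa/N)$ and summing destroys the cancellation in $\sum_{|k-N|<\kappa}a_k\tau_k(x;0)$: the error $\sum_k a_k\tau_k(x;0)\epsilon_k(x)$ can only be bounded by $O(\kappa/N)\sum_{|k-N|<\kappa}|a_k\tau_k(r;0)|\le O(\kappa^2/N)\,\mu_\W(r;f)$, and since $\kappa^2/N\asymp(\ln N)^2(\ln\ln N)^{1+\delta}\to\infty$, this is not even $O(\mu_\W(r;f))$, let alone $O(\kappa/N)M(r;f)$. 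The rescue you propose --- a lower bound $M(r;f)\gg\sqrt{N}\,\mu_\W(r;f)$ --- does not appear anywhere in \S\ref{sec:Propmunu}; the only two-sided relation available is \eqref{Mbound}, whose lower half is merely $M(r;f)\ge\mu_\W(r;f)/K(r)$ with $K(r)\to1$, and the stronger bound is false in general (for lacunary Wilson series one has $M(r;f)=O(\mu_\W(r;f))$ on a large set of radii, exactly as in the classical K\"ovari setting). A secondary, fixable slip: choosing $\beta<1$ and $\omega<\beta$ in Theorem~\ref{tail} makes $b(N)^{(\omega-1)/2}\to\infty$, so your far-term bound $o(\mu_\W(r;f)b(N)^{(\omega-1)/2})$ is also not small; you should instead take $\beta$ large (the paper uses $\beta=10$, $\omega=9$) and observe at the end that the resulting $\kappa$ differs from the stated one only by a bounded factor, so $O(\kappa/N)$ is unaffected.

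The paper circumvents the cancellation problem by never expanding the near part term by term. It writes $f=\phi+\tau_{N-\kappa}(x;0)p(x)$ with $p$ a polynomial of degree at most $2\kappa$, bounds $\max_{|x|=r}|p|$ by $r^{\kappa-N}(1+o(1))M(r;f)$ using the \emph{value} of $f-\phi$ (which keeps the cancellation), and then controls $\D_\W^j p$ by $O((\kappa/r)^j)\max|p|$ via the Bernstein/Cauchy-type estimates of Lemma~\ref{polyn}. Feeding this into the Wilson Leibniz rule (Theorem~\ref{WLeibniz}) applied to $\tau_{N-\kappa}\cdot p$, every term in which $\D_\W$ falls on $p$ rather than on $\tau_{N-\kappa}$ is smaller than the main term by a factor $O(\kappa/N)$, which is how the sharp relative error is obtained. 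If you want to keep your direct decomposition, you need some substitute for Lemma~\ref{polyn} that bounds the aggregate of the near-term perturbations by $O(\kappa/N)\sup|f-\phi|$ rather than by $O(\kappa/N)$ times the sum of the moduli of the individual terms; without such a Bernstein-type input the approach loses an unaffordable factor of $\kappa$.
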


\section{Properties of the Wilson maximal term and central index}
\label{sec:Propmunu}

We start by stating three lemmas which are about some useful properties of the functions $\mu_\W(\cdot;f)$ and $\nu_\W(\cdot;f)$.\ \ All the results in this section hold for entire functions of order smaller than $\frac12$.

\begin{lemma}
\label{munuprop}
Let $f$ be a non-constant entire function of order smaller than $\frac{1}{2}$.\ \ Then $\mu_\W(\cdot;f)$ and $\nu_\W(\cdot;f)$ have the following properties.
\begin{enumerate}[(i)]
	\item $\nu_\W(\cdot;f)$ is a right-continuous non-decreasing piecewise-constant function.\ \ If $f$ is transcendental, then $\displaystyle\lim_{r\to\infty}{\nu_\W(r;f)}=+\infty$.
	\item $\mu_\W(\cdot;f)$ is continuous everywhere and strictly increasing on $[R,+\infty)$ for some $R>0$.\ \ Also $\displaystyle\lim_{r\to\infty}{\mu_\W(r;f)}=+\infty$.
\end{enumerate}
\end{lemma}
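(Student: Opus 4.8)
\textit{Proof proposal.}\ \ The plan is to reduce everything to elementary properties of the auxiliary functions $u_n(r):=|a_n\tau_n(r;0)|=|a_n|\,r(r+1^2)\cdots(r+(n-1)^2)$ for $n\in\mathbb{N}$, together with $u_0(r):=|a_0|$, so that $\mu_\W(r;f)=\sup_{n\ge0}u_n(r)$ and $\nu_\W(r;f)$ is the largest index realising this supremum. Three structural facts drive the whole argument: (a) each $u_n$ is continuous and non-decreasing on $(0,\infty)$, and strictly increasing there as soon as $n\ge1$ and $a_n\ne0$ (it is then a polynomial of degree $n$ with positive coefficients); (b) if $a_m,a_n\ne0$ with $m<n$ then $u_n(r)/u_m(r)=\frac{|a_n|}{|a_m|}\prod_{k=m}^{n-1}(r+k^2)$ is strictly increasing in $r$; and (c) for each fixed $r>0$ one has $u_n(r)\to0$ as $n\to\infty$, because $u_n(r)$ is the modulus of the $n$th term of the series $\sum_k a_k\tau_k(r;0)$, which converges by Theorem~\ref{WSeries}. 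Since $u_n$ is non-decreasing in $r$, fact (c) upgrades on any interval $[a,b]\subset(0,\infty)$ to $\sup_{[a,b]}u_n=u_n(b)\to0$; combined with $\mu_\W(r;f)\ge u_{n_0}(a)>0$ for any fixed $n_0$ with $a_{n_0}\ne0$, this shows that only finitely many $u_n$ can ever be maximal on $[a,b]$, so $\mu_\W$ and $\nu_\W$ are well defined and $\nu_\W$ is bounded on bounded intervals. I would record this reduction first.

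For (i), to prove $\nu_\W$ non-decreasing I would argue by contradiction: if $r_1<r_2$ but $N:=\nu_\W(r_1;f)>M:=\nu_\W(r_2;f)$, then $u_N(r_1)\ge u_M(r_1)$, and monotonicity of the ratio $u_N/u_M$ from (b) gives $u_N(r_2)\ge u_M(r_2)=\mu_\W(r_2;f)$, contradicting that $N$ is not the largest maximising index at $r_2$. A non-decreasing, integer-valued, locally bounded function is automatically piecewise constant. Right-continuity at a point $r_0$: by monotonicity the right limit $L:=\lim_{r\downarrow r_0}\nu_\W(r;f)$ exists, $\nu_\W\equiv L$ on some $(r_0,r_0+\varepsilon)$, and $L\ge\nu_\W(r_0;f)=:N$; if $L>N$ then $u_L(r)=\mu_\W(r;f)\ge u_N(r)$ there, so letting $r\downarrow r_0$ and using continuity of $u_L,u_N$ gives $u_L(r_0)\ge u_N(r_0)=\mu_\W(r_0;f)$, forcing $u_L(r_0)=\mu_\W(r_0;f)$ with $L>N$ --- impossible. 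Lastly, if $f$ is transcendental then infinitely many $a_n$ are nonzero, so if $\nu_\W$ were bounded by some $M$ then $\mu_\W(r;f)=\max_{n\le M}u_n(r)=O(r^M)$ while any $u_p$ with $p>M$ and $a_p\ne0$ is a polynomial of degree $p>M$, forcing $u_p(r)>\mu_\W(r;f)$ for $r$ large --- a contradiction; hence $\nu_\W(r;f)\to\infty$.

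For (ii), continuity of $\mu_\W$ is immediate from the reduction: on any $[a,b]\subset(0,\infty)$ it equals the maximum of the finitely many continuous $u_n$ that can be maximal there. Since $\nu_\W(r;f)\to\infty$, choose $R>0$ with $\nu_\W(r;f)\ge1$ for $r\ge R$; then for $R\le r_1<r_2$, with $N_1:=\nu_\W(r_1;f)\ge1$, strict monotonicity of $u_{N_1}$ gives $\mu_\W(r_1;f)=u_{N_1}(r_1)<u_{N_1}(r_2)\le\mu_\W(r_2;f)$, so $\mu_\W$ is strictly increasing on $[R,\infty)$; and $\mu_\W(r;f)\ge u_n(r)\to\infty$ for any fixed $n\ge1$ with $a_n\ne0$ (such $n$ exists since $f$ is non-constant), so $\mu_\W(r;f)\to\infty$.

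All the individual steps are formal manipulations with maxima of monotone families, essentially mirroring the classical Maclaurin-series theory. The one substantive ingredient --- and the step I expect to matter most --- is fact (c), that $u_n(r)\to0$ for fixed $r$: this is what makes $\mu_\W$ and $\nu_\W$ meaningful and confines the whole discussion to finitely many terms on each bounded interval, and it is exactly where the order hypothesis enters, through the convergence of the Wilson series in Theorem~\ref{WSeries}.
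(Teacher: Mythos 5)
Your proposal is correct in substance and follows essentially the same route as the paper: everything is reduced to the elementary family $u_n(r)=|a_n\tau_n(r;0)|$, the tail is tamed by the convergence of the Wilson series (your fact (c) is exactly the paper's observation that $|a_k|(2r_0)(2r_0+1^2)\cdots(2r_0+(k-1)^2)\to0$), and monotonicity of $\nu_\W$ comes from the monotonicity of the ratio $u_n/u_m$. Your right-continuity argument --- pass to the right limit $L$, use continuity of $u_L$ and $u_N$ at $r_0$, and contradict the maximality of $N=\nu_\W(r_0;f)$ --- is a slightly slicker packaging of the paper's explicit $\delta$-construction, and deducing piecewise-constancy abstractly from ``non-decreasing, integer-valued, locally bounded'' is legitimate once your reduction has established local boundedness; these are cosmetic, not substantive, differences.

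One small repair is needed in part (ii). You justify the existence of $R$ with $\nu_\W(r;f)\ge1$ on $[R,\infty)$ by writing ``since $\nu_\W(r;f)\to\infty$,'' but you (correctly) proved that limit only for transcendental $f$, whereas the lemma also covers non-constant polynomials, for which $\nu_\W(r;f)$ stabilizes at $\deg f$ and does not tend to infinity. The conclusion you need is still immediate, and is what the paper actually uses: non-constancy gives some $n\ge1$ with $a_n\ne0$, so $u_n(r)\to\infty$ eventually exceeds $u_0\equiv|a_0|$, forcing $\nu_\W(r;f)\ge1$ for all large $r$ (and then for all $r\ge R$ by monotonicity of $\nu_\W$). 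With that one-line fix the argument is complete.
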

\begin{proof}
We write $\displaystyle f(x)=\sum_{k=0}^\infty{a_k\tau_k(x;0)}$.
\begin{enumerate}[(i)]
	\item We first prove that $\nu_\W(\cdot;f)$ is non-decreasing.\ \ Let $r_1>r_2>0$ and denote $N_1:=\nu_\W(r_1;f)$ and $N_2:=\nu_\W(r_2;f)$.\ \ If on the contrary $N_1<N_2$, then
	\begin{align*}
		\frac{|a_{N_2}|r_2(r_2+1^2)\cdots(r_2+(N_2-1)^2)}{|a_{N_1}|r_2(r_2+1^2)\cdots(r_2+(N_1-1)^2)} &= \frac{|a_{N_2}|}{|a_{N_1}|}(r_2+N_1^2)\cdots(r_2+(N_2-1)^2) \\
		&< \frac{|a_{N_2}|}{|a_{N_1}|}(r_1+N_1^2)\cdots(r_1+(N_2-1)^2) \\
		&= \frac{|a_{N_2}|r_1(r_1+1^2)\cdots(r_1+(N_2-1)^2)}{|a_{N_1}|r_1(r_1+1^2)\cdots(r_1+(N_1-1)^2)} \\
		&\le 1,
	\end{align*}
	which is a contradiction.\ \ So $\nu_\W(\cdot;f)$ is non-decreasing.
	
	Next we prove that $\nu_\W(\cdot;f)$ is right-continuous and piecewise constant.\ \ Let $r_0>0$ and denote $N_0:=\nu_\W(r_0;f)$.\ \ Since the Wilson series of $f$ converges absolutely at $2r_0$, we have
	\[
		\lim_{k\to\infty}{|a_k|(2r_0)(2r_0+1^2)\cdots(2r_0+(k-1)^2)}=0,
	\]
	and so there exists $K>N_0$ such that
	\begin{align}
		\label{W3}|a_k|(2r_0)(2r_0+1^2)\cdots(2r_0+(k-1)^2)<\mu_\W(r_0;f)
	\end{align}
	for every $k>K$.\ \ Also, by the definition of $N_0$ we must have
	\begin{align}
		\label{W4}|a_k|r_0(r_0+1^2)\cdots(r_0+(k-1)^2)<\mu_\W(r_0;f)
	\end{align}
	for every $k>N_0$.\ \ Since each polynomial $|a_k|r(r+1^2)\cdots(r+(k-1)^2)$ is continuous, \eqref{W4} implies that there exists $\delta\in(0,r_0)$ such that for every $r\in[r_0,r_0+\delta]$ and for those (finitely many) $k\in\{N_0+1,\ldots,K\}$ we have
	\begin{align}
		\label{W5}|a_k|r(r+1^2)\cdots(r+(k-1)^2)<\mu_\W(r_0;f).
	\end{align}
	On the other hand, for every $r\in[r_0,r_0+\delta]\subseteq[r_0,2r_0]$ and every $k>K$, \eqref{W3} gives
	\begin{align}
		\nonumber |a_k|r(r+1^2)\cdots(r+(k-1)^2)&\le|a_k|(2r_0)(2r_0+1^2)\cdots(2r_0+(k-1)^2) \\
		\label{W6} &<\mu_\W(r_0;f).
	\end{align}
	So combining \eqref{W5} and \eqref{W6}, for every $r\in[r_0,r_0+\delta]$ and every $k>N_0$ we have
	\begin{align*}
		|a_k|r(r+1^2)\cdots(r+(k-1)^2)&<\mu_\W(r_0;f) \\
		&=|a_{N_0}|r_0(r_0+1^2)\cdots(r_0+(N_0-1)^2),
	\end{align*}
	which implies that $\nu_\W(r;f)\le N_0$ for every $r\in[r_0,r_0+\delta]$.\ \ But since $\nu_\W(r;f)$ is non-decreasing, we must have $\nu_\W(r;f)=N_0$ for every $r\in[r_0,r_0+\delta]$.\ \ Since $r_0\in(0,+\infty)$ was arbitrary, it follows that $\nu_\W(\cdot;f)$ is right-continuous and piecewise constant.
	
	Finally we let $\displaystyle a:=\max_{n\in\mathbb{N}_0}{|a_n|}$.\ \ Then for every $n\in\mathbb{N}_0$ and every $r>0$ we have
	\[
		|a_n|r(r+1^2)\cdots(r+(n-1)^2) \le ar(r+1^2)\cdots(r+(\nu_\W(r;f)-1)^2).
	\]
	So for all those $n\in\mathbb{N}_0$ with $a_n\ne 0$ we have
	\begin{align}
		\label{W7} n\le\liminf_{r\to\infty}{\nu_\W(r;f)}.
	\end{align}
	If $f$ is transcendental, then there are infinitely many $n\in\mathbb{N}_0$ with $a_n\ne 0$.\ \ \eqref{W7} now holds for infinitely many $n$, so we have $\displaystyle\lim_{r\to\infty}{\nu_\W(r;f)}=+\infty$.
	\item We first prove that $\mu_\W(\cdot;f)$ is continuous on $(0,+\infty)$.\ \ Since $f$ is non-constant, there exists $n\in\mathbb{N}$ such that $a_n\ne 0$.\ \ Now let $r_0\in(0,+\infty)$.\ \ Similar to (i), there exists $K>n$ such that
	\[
		|a_k|(2r_0)(2r_0+1^2)\cdots(2r_0+(k-1)^2)<|a_n|r_0(r_0+1^2)\cdots(r_0+(n-1)^2)
	\]
	for every $k>K$.\ \ Then for every $r\in[r_0,2r_0]$ we have
	\begin{align*}
		|a_k|r(r+1^2)\cdots(r+(k-1)^2)&\le|a_k|(2r_0)(2r_0+1^2)\cdots(2r_0+(k-1)^2) \\
		&<|a_n|r_0(r_0+1^2)\cdots(r_0+(n-1)^2) \\
		&\le|a_n|r(r+1^2)\cdots(r+(n-1)^2)
	\end{align*}
	for every $k>K$, so
	\[
		\mu_\W(r;f)=\max_{k\in\{0,1,\ldots,K\}}{|a_k|r(r+1^2)\cdots(r+(k-1)^2)}.
	\]
	Being the maximum of finitely many polynomials on $[r_0,2r_0]$, $\mu_\W(\cdot;f)$ is continuous on $[r_0,2r_0]$.\ \ Since $r_0\in(0,+\infty)$ was arbitrary, $\mu_\W(\cdot;f)$ is continuous on $(0,+\infty)$.
	
	Next, since $f$ is non-constant and $\nu_\W(\cdot;f)$ is non-decreasing, there exists $R>0$ such that $\nu_\W(r;f)\ge 1$ for every $r>R$.\ \ Now for every $r_1>r_2\ge R$, denoting $N:=\nu_\W(r_2;f)$, we have $|a_N|\ne 0$ and
	\begin{align*}
		\mu_\W(r_2;f) &= |a_N|r_2(r_2+1^2)\cdots(r_2+(N-1)^2) \\
		&< |a_N|r_1(r_1+1^2)\cdots(r_1+(N-1)^2) \\
		&\le \mu_\W(r_1;f),
	\end{align*}
	so $\mu_\W(\cdot;f)$ is strictly increasing on $[R,+\infty)$.

	Finally, since $f$ is non-constant, there exists $n\in\mathbb{N}$ such that $a_n\ne 0$.\ \ Thus we have
	\[
		\mu_\W(r;f) \ge |a_n|r(r+1^2)\cdots(r+(n-1)^2)
	\]
	for every $r\in[0,+\infty)$, and so $\displaystyle\lim_{r\to\infty}{\mu_\W(r;f)}=+\infty$.
\end{enumerate}
\end{proof}

\begin{lemma}
\label{gg}
Let $\displaystyle f(x)=\sum_{k=0}^\infty{a_k\tau_k(x;0)}$ be an entire function of order smaller than $\frac{1}{2}$, and let $\gamma>3$.\ \ Then for each $n\in\mathbb{N}_0$, there exists $K_n>1$ such that
	\[
		|a_n\tau_n(r;0)| \le K_n M(r;f)
	\]
for every $r>\max\{4n^2,n^\gamma\}$, and the sequence $\{K_n\}_{n\in\mathbb{N}_0}$ decreases to $1$.
\end{lemma}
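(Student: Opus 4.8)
The plan is to start from an integral representation of the Wilson coefficients. Since $f$ has order smaller than $\frac12$, it satisfies \eqref{A7}, so Theorem~\ref{WSeries} applies; moreover, by the residue computation in the proof of that theorem together with Cauchy's residue theorem --- which shows that the integral below is independent of $R$ once $R>n^2$, the integrand $\tfrac{f(y)}{\tau_{n+1}(y;0)}$ being meromorphic with poles only at the zeros $0,-1,-4,\dots,-n^2$ of $\tau_{n+1}(\cdot\,;0)$ and $f$ being entire --- we have
\[
	a_n=-\frac{1}{2\pi i}\int_{\partial D(0;R)}\frac{f(y)}{\tau_{n+1}(y;0)}\,dy
\]
for every $R>n^2$. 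Given $r>\max\{4n^2,n^\gamma\}$, which forces $r>n^2$, I would take $R=r$.

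The estimation is then routine. On the circle $|y|=r$ one has $|y+j^2|\ge r-j^2>0$ for $0\le j\le n$, so $|\tau_{n+1}(y;0)|=\prod_{j=0}^{n}|y+j^2|\ge\prod_{j=0}^{n}(r-j^2)$; the standard $ML$-estimate then gives $|a_n|\le r\,M(r;f)\big/\prod_{j=0}^{n}(r-j^2)$, and multiplying by $|\tau_n(r;0)|=\prod_{j=0}^{n-1}(r+j^2)$ and cancelling yields $|a_n\tau_n(r;0)|\le K_n(r)\,M(r;f)$, where
\[
	K_n(r):=\frac{r}{r-n^2}\prod_{j=1}^{n-1}\frac{r+j^2}{r-j^2}.
\]
Each factor of $K_n(r)$ is positive and strictly decreasing in $r$ on $(n^2,\infty)$, so on the range $r>r_0$, where $r_0:=\max\{4n^2,n^\gamma\}$, one has $K_n(r)\le K_n(r_0)=:k_n$, with $k_n>1$ since every factor exceeds $1$ there. (For $n=0$ this is trivial: $a_0=f(0)$, so $|a_0\tau_0(r;0)|=|f(0)|\le M(r;f)$ and one takes $k_0:=1$.)

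It then remains to prove $k_n\to1$ and to pass to a monotone sequence. For all sufficiently large $n$ one has $n^\gamma\ge4n^2$ (as $\gamma>2$), so $r_0=n^\gamma$, and taking logarithms
\[
	\ln k_n=-\ln\bigl(1-n^{2-\gamma}\bigr)+\sum_{j=1}^{n-1}\ln\frac{n^\gamma+j^2}{n^\gamma-j^2};
\]
using $\ln(1+t)\le t$, the inequality $n^\gamma-j^2\ge n^\gamma-n^2$ for $j<n$, and $\sum_{j=1}^{n-1}j^2\le n^3/3$, the sum on the right is at most $\tfrac{2n}{3(n^{\gamma-2}-1)}\asymp n^{3-\gamma}$, which tends to $0$ precisely because $\gamma>3$. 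Hence $k_n\to1$, so $(k_n)$ is bounded and $K_n:=\sup_{m\ge n}k_m$ is finite; it is $>1$, non-increasing, and tends to $1$. As $|a_n\tau_n(r;0)|\le k_n\,M(r;f)\le K_n\,M(r;f)$ for $r>\max\{4n^2,n^\gamma\}$, this is the assertion. (If a strictly decreasing sequence is wanted, replace $K_n$ by $K_n+2^{-n}$.)

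The only non-mechanical point is calibrating the constant so that $K_n\to1$ rather than merely staying bounded: this needs the contour radius to be taken comparable to $r$ itself --- a fixed multiple of $n^2$ would leave a residual factor like $\tfrac43$ --- and it needs $\sum_{j<n}j^2/n^\gamma\to0$, that is $\gamma>3$, which is exactly the hypothesis on $\gamma$. The closing monotonization is a routine device.
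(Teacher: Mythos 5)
Your proof is correct and follows essentially the same route as the paper's: the same residue/contour-integral representation of $a_n$ over $\partial D(0;r)$, the same $ML$-estimate against $\prod_{j}(r-j^2)$, and the same product ratio $\prod(r+j^2)/\prod(r-j^2)$ controlled via $r>n^\gamma$ with $\gamma>3$. The only cosmetic difference is that the paper bounds the product crudely by $(1+n^2/r)^n(1-n^2/r)^{-n}$ to get an explicit formula for $K_n$, whereas you evaluate the ratio at the endpoint and then monotonize by taking suprema; both yield the stated conclusion.
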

\begin{proof}
We let $K_0=K_1=9$ and $K_n:=\left(1+\frac{1}{n^{\gamma-2}}\right)^n\left(1-\frac{1}{n^{\gamma-2}}\right)^{-n}$ for $n\ge 2$.\ \ Then the sequence $\{K_n\}_{n\in\mathbb{N}_0}$ decreases to $1$.\ \ Now for each $n\in\mathbb{N}_0$ and each $r>\max\{4n^2,n^\gamma\}$, applying Cauchy's Residue Theorem as in Theorem~\ref{WSeries}, we have
\[
	a_n = -\frac{1}{2\pi i}\int_{\partial D(0;r)}{\frac{f(y)}{\tau_{n+1}(y;0)}\,dy},
\]
so
\begin{align*}
	|a_n \tau_n(r;0)| &\le \frac{1}{2\pi}\frac{2\pi r M(r;f)}{r(r-1^2)(r-2^2)\cdots(r-n^2)}r(r+1^2)\cdots(r+(n-1)^2) \\
	&= \frac{(1+\frac{0^2}{r})(1+\frac{1^2}{r})\cdots(1+\frac{(n-1)^2}{r})}{(1-\frac{1^2}{r})(1-\frac{2^2}{r})\cdots(1-\frac{n^2}{r})}M(r;f) \\
	&\le \left(1+\frac{n^2}{r}\right)^n\left(1-\frac{n^2}{r}\right)^{-n} M(r;f) \\
	&\le K_n M(r;f).
\end{align*}
\end{proof}

\begin{lemma}
\label{muorder}
Let $f\not\equiv0$ be an entire function of order $\sigma<\frac{1}{2}$.\ \ Then
\begin{enumerate}[(i)]
	\item $\displaystyle\sigma_{\mu_\W(\cdot;f)}=\limsup_{r\to\infty}\frac{\ln^+\nu_\W(r;f)}{\ln r}\le\sigma$.
	\item In particular, for every $\gamma<\frac{1}{\sigma_{\mu_\W(\cdot;f)}}$, we have
	\[
		\nu_\W(r;f)^\gamma\le r
	\]
	for every sufficiently large $r\in(0,+\infty)$.
\end{enumerate}
\end{lemma}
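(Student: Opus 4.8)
The plan is to prove (i) by sandwiching $\ln\mu_\W(r;f)$ between expressions built from $\nu_\W(\cdot;f)$ on one side and from $M(r;f)$ on the other, and then to read off (ii) from (i). The backbone is the representation
\[
	\ln\mu_\W(r;f) = \ln\mu_\W(r_0;f) + \int_{r_0}^r\Bigg(\,\sum_{j=0}^{\nu_\W(t;f)-1}\frac{1}{t+j^2}\Bigg)\,dt\qquad(r>r_0>0),
\]
which holds because on any interval where $\nu_\W(\cdot;f)\equiv N$ the function $\mu_\W(t;f)=|a_N|\prod_{j=0}^{N-1}(t+j^2)$ is a single polynomial, so by Lemma~\ref{munuprop} the function $\ln\mu_\W(\cdot;f)$ is continuous, piecewise $C^1$ and Lipschitz on compacta, hence absolutely continuous with the indicated a.e.\ derivative. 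Throughout one may assume $f$ transcendental, since otherwise $\nu_\W(\cdot;f)$ is eventually constant and the statement is trivial.

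The decisive preliminary step — and the only place where the hypothesis $\sigma<\tfrac12$ is genuinely used — is an a priori bound $\nu_\W(r;f)=O(r^{1/(2+s)})$ for some fixed $s>0$, so that in particular $\nu_\W(r;f)=o(\sqrt r\,)$. First I would fix $\varepsilon>0$ and $s\in\big(0,\min\{1,\tfrac1{\sigma+\varepsilon}-2\}\big)$ (possible since $\sigma<\tfrac12$) and apply the Cauchy-type formula $a_n=-\tfrac1{2\pi i}\int_{\partial D(0;R)}\tfrac{f(y)}{\tau_{n+1}(y;0)}\,dy$ from the proof of Theorem~\ref{WSeries} with $R=n^{2+s}$. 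Since $\min_{|y|=R}|\tau_{n+1}(y;0)|=R\prod_{j=1}^n(R-j^2)\ge R\,n^{(2+s)n}e^{-o(n)}$ and $M(n^{2+s};f)\le\exp\!\big(n^{(2+s)(\sigma+\varepsilon)}\big)=e^{o(n)}$, this yields $|a_n|\le e^{o(n)}n^{-(2+s)n}$ as $n\to\infty$, whence $|a_n\tau_n(r;0)|=|a_n|\prod_{j=0}^{n-1}(r+j^2)\le e^{o(n)}\big(\tfrac{r}{n^{2+s}}+n^{-s}\big)^n\le 2^{-n/2}$ whenever $n\ge(4r)^{1/(2+s)}$ and $n$ exceeds an absolute constant. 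For such $n$ one has $|a_n\tau_n(r;0)|<|a_\ell|\,r^{\ell}\le\mu_\W(r;f)$ for any fixed $\ell$ with $a_\ell\neq0$, so $n$ cannot be $\nu_\W(r;f)$; hence $\nu_\W(r;f)<(4r)^{1/(2+s)}$ for all large $r$.

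With $N:=\nu_\W(r;f)=o(\sqrt r\,)$ established, the Cauchy formula with $R=2r$ becomes available, and using $\sum_{j=1}^{N-1}\ln\frac{r+j^2}{2r-j^2}\le-(N-1)\ln2+\tfrac2r\sum_{j<N}j^2=-N\ln2\,(1-o(1))$ one gets $\mu_\W(r;f)=|a_N|\prod_{j=0}^{N-1}(r+j^2)\le M(2r;f)$, hence $\sigma_{\mu_\W(\cdot;f)}\le\sigma$. In the reverse direction, for large $r$ every factor satisfies $\frac{2r+j^2}{r+j^2}\ge\frac32$ when $0\le j\le N-1\le\sqrt r$, so $\mu_\W(2r;f)\ge(\tfrac32)^N\mu_\W(r;f)\ge(\tfrac32)^N$, which gives $\nu_\W(r;f)\le\frac{\ln\mu_\W(2r;f)}{\ln(3/2)}$ and therefore $\limsup_r\tfrac{\ln^+\nu_\W(r;f)}{\ln r}\le\sigma_{\mu_\W(\cdot;f)}$. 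Finally, writing $\rho:=\limsup_r\tfrac{\ln^+\nu_\W(r;f)}{\ln r}\le\sigma_{\mu_\W(\cdot;f)}\le\sigma<\infty$ and feeding $\sum_{j<N}\tfrac1{t+j^2}\le\tfrac Nt$ into the integral representation, one finds for every $\varepsilon>0$ that $\ln\mu_\W(r;f)\le C+\int^r t^{\rho+\varepsilon-1}\,dt=O(r^{\rho+\varepsilon})$, so $\sigma_{\mu_\W(\cdot;f)}\le\rho+\varepsilon$; letting $\varepsilon\downarrow0$ closes the chain of equalities in (i). Part (ii) then follows at once: for $\gamma<\tfrac1{\sigma_{\mu_\W(\cdot;f)}}$ pick $\varepsilon>0$ with $\gamma(\sigma_{\mu_\W(\cdot;f)}+\varepsilon)<1$, so that $\nu_\W(r;f)^\gamma\le r^{\gamma(\sigma_{\mu_\W(\cdot;f)}+\varepsilon)}\le r$ for all large $r$ (the case $\sigma_{\mu_\W(\cdot;f)}=0$ being handled by taking $\varepsilon$ arbitrarily small).

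The hard part is the a priori bound $\nu_\W(r;f)=o(\sqrt r\,)$ of the second paragraph: the integral representation by itself gives only $\sum_{j\ge0}\tfrac1{t+j^2}<\tfrac\pi{2\sqrt t}+\tfrac1t$, hence $\sigma_{\mu_\W(\cdot;f)}\le\tfrac12$, which is too weak, and it is precisely the super-factorial coefficient decay $|a_n|\lesssim e^{o(n)}n^{-(2+s)n}$ forced by $\sigma<\tfrac12$ that pushes $\nu_\W$ below $\sqrt r$ and makes the estimate $\mu_\W(r;f)\le M(2r;f)$ usable.
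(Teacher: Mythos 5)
Your proof is correct, but it reaches the key inequality $\sigma_{\mu_\W(\cdot;f)}\le\sigma$ by a genuinely different route than the paper. The paper passes through the Maclaurin coefficients: it writes $a_n=\sum_{k\ge n}(-1)^kb_kT(k,n)$ with the Chebyshev--Stirling numbers of Proposition~\ref{C}, builds the majorant $f^*(x)=\sum(-1)^k|a_k|\tau_k(x;0)$, controls $\sigma_{f^*}$ via the classical Lindel\"of--Pringsheim theorem and the bound $T(k,n)\le Ke^{2n}n^{2k-2n}$, and then uses $\mu_\W(r;f)\le f^*(r)\le M(r;f^*)$. You instead estimate $|a_n|$ directly from the contour formula $a_n=-\frac{1}{2\pi i}\int_{\partial D(0;R)}\frac{f(y)}{\tau_{n+1}(y;0)}\,dy$ with the radius $R=n^{2+s}$ optimized against the order, which yields the a~priori bound $\nu_\W(r;f)=O(r^{1/(2+s)})=o(\sqrt r\,)$ and then the Lemma~\ref{gg}-style inequality $\mu_\W(r;f)\le M(2r;f)$. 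This is arguably more self-contained (no $T(k,n)$, no auxiliary $f^*$, no appeal to the power-series Lindel\"of--Pringsheim theorem), and it cleanly resolves the circularity that forces the paper to state Lemma~\ref{gg} with the hypothesis $r>n^\gamma$: by letting the radius depend on $n$ rather than on a prescribed $r$, you get the coefficient decay first and the bound on $\nu_\W$ second. Your remaining steps are essentially the paper's steps (3) and (4) in different clothing: the doubling inequality $\mu_\W(2r;f)\ge(3/2)^N\mu_\W(r;f)$ replaces the paper's choice $R=2r+(N-1)^2$ in \eqref{W8} (and your a~priori bound $N\le\sqrt r$ replaces the paper's separate contradiction argument for the claim $N^2\le r$), while the Valiron-type representation $\ln\mu_\W(r;f)=\ln\mu_\W(r_0;f)+\int_{r_0}^r\sum_{j<\nu_\W(t;f)}\frac{dt}{t+j^2}$ — legitimate since $\ln\mu_\W(\cdot;f)$ is continuous and piecewise $C^1$ with finitely many pieces on compacta, and a device the paper itself deploys later in \eqref{W18} — replaces the paper's direct estimate $\ln\mu_\W(r;f)\le N(\ln r+O(N^{2-\gamma}))$. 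Both arguments are complete and give exactly the statement of the lemma; the paper's detour through $f^*$ buys nothing that is reused elsewhere, so your version is a legitimate simplification.
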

The inequality $\sigma_{\mu_\W(\cdot;f)}\le\sigma$ in Lemma~\ref{muorder} (i) can be improved to an equality for an entire function of order $\sigma<\frac13$, but for $\sigma\in[\frac13,\frac12)$ we are currently unsure about whether this can be improved.\ \ We state this result as the following theorem, but delay its proof to \S\ref{sec:Proofs} and only prove Lemma~\ref{muorder} in the meantime.

\begin{theorem}
\label{muorderequal}
Let $f\not\equiv0$ be an entire function of order $\sigma<\frac13$.\ \ Then
\[
	\sigma=\sigma_{\mu_\W(\cdot;f)}=\limsup_{r\to\infty}\frac{\ln^+\nu_\W(r;f)}{\ln r}.
\]
\end{theorem}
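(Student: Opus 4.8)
The plan is to combine the main tail estimate (Theorem~\ref{tail}) with the monotonicity of the maximum modulus in order to bound $M(r;f)$ from above by a polynomial-in-$r$ multiple of $\mu_\W(r;f)$, valid for $r$ outside a negligible set of radii. We may assume $f$ is transcendental: if $f$ is a polynomial (in particular if it is a nonzero constant) then $\sigma=0$, while for large $r$ one has $\nu_\W(r;f)=\deg f$ and $\mu_\W(r;f)$ is, up to a constant, a fixed power of $r$, so all three quantities in the statement equal $0$. By Lemma~\ref{muorder}(i) we already know that $\sigma_{\mu_\W(\cdot;f)}=\limsup_{r\to\infty}\frac{\ln^+\nu_\W(r;f)}{\ln r}\le\sigma$, so only the reverse inequality $\sigma\le\sigma_{\mu_\W(\cdot;f)}$ remains.

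Fix $\gamma\in(3,\tfrac1\sigma)$ and $\delta>0$ (both possible since $\sigma<\tfrac13$), and let $E\subset[1,\infty)$ be the set of finite logarithmic measure furnished by Theorem~\ref{tail}. I would apply that theorem with $h=0$ and with one fixed choice of parameters satisfying $1<\omega<\beta$, say $\beta=2$ and $\omega=\tfrac32$. Since $b(N)=\bigl(N\ln N(\ln\ln N)^{1+\delta}\bigr)^{-1}\to0$ and $\omega-1>0$, the factor $b(N)^{(\omega-1)/2}$ tends to $0$, so Theorem~\ref{tail} yields
\[
	\sum_{k:|k-N|\ge\kappa}|a_k\tau_k(r;0)|=o\bigl(\mu_\W(r;f)\bigr)\qquad\text{as }r\to\infty,\ r\notin E,
\]
where $N=\nu_\W(r;f)$ and $\kappa=\bigl[\,\sqrt{\tfrac{\beta}{b(N)}\ln\tfrac1{b(N)}}\,\bigr]$; since $\tfrac1{b(N)}=N^{1+o(1)}$ and $\ln\tfrac1{b(N)}=O(\ln N)$, one checks that $\kappa=O\bigl(\sqrt N\,(\ln N)^2\bigr)=o(N)$. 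On the other hand, for $|x|=r$ we have $|\tau_k(x;0)|=\prod_{j=0}^{k-1}|x+j^2|\le\prod_{j=0}^{k-1}(r+j^2)=|\tau_k(r;0)|$, so uniform convergence of the Wilson series gives
\[
	M(r;f)\le\sum_{k=0}^\infty|a_k\tau_k(r;0)|\le\sum_{k:|k-N|<\kappa}|a_k\tau_k(r;0)|+\sum_{k:|k-N|\ge\kappa}|a_k\tau_k(r;0)|\le(2\kappa-1)\mu_\W(r;f)+o\bigl(\mu_\W(r;f)\bigr),
\]
because each term of the first, finite, sum is at most $\mu_\W(r;f)$ by the definition of the Wilson maximal term. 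Hence $M(r;f)\le N\,\mu_\W(r;f)$ for all sufficiently large $r\notin E$; and since $\sigma_{\mu_\W(\cdot;f)}\le\sigma<\tfrac13$, Lemma~\ref{muorder}(ii) gives $N=\nu_\W(r;f)\le r^{1/3}$ for large $r$, so that $M(r;f)\le r^{1/3}\mu_\W(r;f)$ for all sufficiently large $r\notin E$.

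To finish, fix $\varepsilon>0$. Since $\logmea E<\infty$, for all large $r$ the interval $[r,2r]$ is not contained in $E$, so we may choose $r'\in[r,2r]\setminus E$; by monotonicity of the maximum modulus, $M(r;f)\le M(r';f)\le (r')^{1/3}\mu_\W(r';f)$. For large $r'$ the definition of $\sigma_{\mu_\W(\cdot;f)}$ gives $\ln^+\ln^+\mu_\W(r';f)\le(\sigma_{\mu_\W(\cdot;f)}+\tfrac\varepsilon2)\ln r'$, hence $\ln\mu_\W(r';f)\le(r')^{\sigma_{\mu_\W(\cdot;f)}+\varepsilon/2}$, which dominates the factor $\tfrac13\ln r'$, so $\ln\ln M(r;f)\le\ln2+(\sigma_{\mu_\W(\cdot;f)}+\tfrac\varepsilon2)\ln r'\le\ln2+(\sigma_{\mu_\W(\cdot;f)}+\tfrac\varepsilon2)(\ln r+\ln2)$. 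Dividing by $\ln r$ and letting $r\to\infty$ shows $\sigma\le\sigma_{\mu_\W(\cdot;f)}+\varepsilon$; as $\varepsilon>0$ was arbitrary, $\sigma\le\sigma_{\mu_\W(\cdot;f)}$, which together with Lemma~\ref{muorder}(i) proves the theorem. The only step requiring genuine care is the choice of the free parameters in Theorem~\ref{tail}: one must take $\omega>1$ so that the apparently divergent factor $b(N)^{(\omega-1)/2}$ in fact tends to $0$, while at the same time verifying that the resulting $\kappa$ grows like $o(N)$ (in particular polynomially in $r$); the passage from ``$r\notin E$'' to all $r$ is the standard monotonicity argument of Wiman--Valiron theory and presents no real difficulty.
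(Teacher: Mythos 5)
Your proof is correct, and it reaches the conclusion by a route that is genuinely different from the paper's in the key step. The paper proves the theorem by establishing the two-sided bound \eqref{Mbound}, namely $\mu_\W(r;f)\le K(r)M(r;f)\le\mu_\W(r;f)[\ln^+\mu_\W(r;f)]^{\frac12+\varepsilon}$ off the exceptional set: the left inequality comes from the residue-theorem coefficient estimate of Lemma~\ref{gg} combined with Lemma~\ref{muorder}(ii), and the right inequality is obtained by summing the Gaussian decay of Lemma~\ref{WVestimate} over \emph{all} indices, following Hayman's argument, which produces the sharp logarithmic factor. You instead take $\sigma_{\mu_\W(\cdot;f)}\le\sigma$ directly from Lemma~\ref{muorder}(i) and, for the reverse inequality, invoke the already-packaged Theorem~\ref{tail} (with $h=0$ and a correctly chosen $\omega>1$ so that $b(N)^{(\omega-1)/2}\to0$) to dispose of the far tail, bounding the central block trivially by $(2\kappa-1)\mu_\W(r;f)$; this yields only the much cruder inequality $M(r;f)\le r^{1/3}\mu_\W(r;f)$ outside $E$, but a polynomial factor in $r$ is harmless under the double logarithm, so it suffices. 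There is no circularity in quoting Theorem~\ref{tail} here even though the paper proves Theorem~\ref{muorderequal} first: the proof of Theorem~\ref{tail} rests only on Theorem~\ref{tauexceptional} and Lemma~\ref{WVestimate}, neither of which uses Theorem~\ref{muorderequal}. Your explicit removal of the exceptional set by picking $r'\in[r,2r]\setminus E$ and using monotonicity of $M(\cdot;f)$ is a step the paper leaves implicit, and you carry it out correctly. The trade-off is that your argument is more self-contained (no appeal to Hayman's summation argument), at the cost of a weaker intermediate inequality than \eqref{Mbound}; for the purpose of computing the order, and indeed for the later uses of \eqref{Mbound} in the paper, this weaker form is still adequate.
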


\bigskip
\noindent \textit{Proof of Lemma~\ref{muorder}.}\ \ Let the Maclaurin series expansion of $f$ be $\displaystyle f(x)=\sum_{k=0}^\infty{b_k x^k}$.\ \ Since $\sigma<\frac{1}{2}$, Theorem~\ref{WSeries} implies that there exists a sequence $\{a_n\}_{n\in\mathbb{N}_0}$ of complex numbers such that
\[
	f(x)=\sum_{k=0}^\infty{a_k \tau_k(x;0)},
\]
and it follows that
\begin{align}
\label{W11}
\begin{aligned}
	a_n&=\frac{(-1)^n}{n!}\sum_{k=n}^\infty{b_k\left.\D_\W^n x^k\right|_{x=0^{+(n)}}}=\sum_{k=n}^\infty{(-1)^k b_k T(k,n)}
\end{aligned}
\end{align}
where the notation $T(k,n)$ is as in Proposition~\ref{C}.\ \ Next let $f^*$ be the function defined by the Wilson series
\[
	f^*(x):=\sum_{k=0}^\infty{(-1)^k|a_k|\tau_k(x;0)}
\]
and let $\{b_n^*\}_{n\in\mathbb{N}_0}$ be the sequence of real numbers such that $\displaystyle f^*(x)=\sum_{k=0}^\infty{b_k^* x^k}$.\ \ Note that for each pair of positive integers $k\ge n$ we have
\begin{align}
	\nonumber \frac{1}{n!}\left.\frac{d^n}{dx^n}(-1)^k\tau_k(x;0)\right|_{x=0} &= \mbox{coefficient of $x^{k-n}$ in $x(x+1^2)\cdots(x+(k-1)^2)$} \\
	\label{W0} &\le\left(\frac{(k-1)!}{(n-1)!}\right)^2\frac{k!}{n!(k-n)!}.
\end{align}

Since (i) and (ii) follow trivially if $f$ is (non-zero) constant, we assume from now on that $f$ is a non-constant function.\ \ Now we prove (i), and we divide the proof into the following four steps.\ \ The proof of (ii) is essentially step (3).
\begin{enumerate}
\item We first show that $f^*$ is entire and $\sigma_{f^*}\le\sigma$.\ \ Let $\gamma\in(2,\frac{1}{\sigma})$ be arbitrary.\ \ By Lindel\"{o}f-Pringsheim theorem \cite{Boas} we have
\[
	\sigma=\limsup_{k\to\infty}{\frac{k\ln k}{-\ln|b_k|}},
\]
so $|b_k|<k^{-\gamma k}$ for every sufficiently large $k$.\ \ Applying this and Proposition~\ref{C} (ii) to \eqref{W11}, we see that there exist positive constants $K_1$ and $K_2$ such that for every sufficiently large $n\in\mathbb{N}$,
\begin{align*}
	|a_n| &\le \sum_{k=n}^\infty{|b_k|T(k,n)} \\
	&\le K_1\frac{e^{2n}}{n^{2n}}\sum_{k=n}^\infty{\frac{n^{2k}}{k^{\gamma k}}} \le K_1\frac{e^{2n}}{n^{2n}}\sum_{k=n}^\infty{(n^{2-\gamma})^k} \\
	&\le K_2\frac{e^{2n}}{n^{2n}}(n^{2-\gamma})^n = K_2\frac{e^{2n}}{n^{\gamma n}}.
\end{align*}
Applying \eqref{W0} and Stirling's approximation, we see that there exist positive constants $K_3$ and $K_4$ such that for every sufficiently large $n\in\mathbb{N}$,
\begin{align*}
	b_n^* &= \frac{1}{n!}\sum_{k=n}^\infty{|a_k|\left.\frac{d^n}{dx^n}(-1)^k\tau_k(x;0)\right|_{x=0}} \\
	&\le K_2\sum_{k=n}^\infty{\frac{e^{2k}}{k^{\gamma k}}\left(\frac{(k-1)!}{(n-1)!}\right)^2\frac{k!}{n!(k-n)!}} \\
	&= \frac{K_2 n^2}{(n!)^3}\sum_{k=n}^\infty{\frac{e^{2k}((k-1)!)^2}{k^{\gamma k}}\frac{k!}{(k-n)!}} \\
	&\le \frac{K_3 n^2}{(n!)^3}\sum_{k=n}^\infty{\frac{1}{k^{(\gamma-2)k}}k^n} \le \frac{K_4 n^{\frac12}e^{3n}}{n^{\gamma n}}.
\end{align*}
This shows that $f^*$ is an entire function of order
\[
	\sigma_{f^*} \le \frac{1}{\gamma}.
\]
Since $\gamma\in(2,\frac{1}{\sigma})$ was arbitrary, we have $\sigma_{f^*}\le\sigma$.

\bigskip
\item We next show that $\sigma_{\mu_\W(\cdot;f)}\le\sigma_{f^*}$.\ \ For every $r>0$, writing $N:=\nu_\W(r;f)$ we have
\[
	\mu_\W(r;f)=|a_N|r(r+1^2)\cdots(r+(N-1)^2)\le f^*(r)\le M(r;f^*),
\]
so we immediately obtain
\[
	\sigma_{\mu_\W(\cdot;f)}=\limsup_{r\to\infty}{\frac{\ln^+\ln^+\mu_\W(r;f)}{\ln r}}\le \limsup_{r\to\infty}{\frac{\ln^+\ln^+ M(r;f^*)}{\ln r}} =\sigma_{f^*}.
\]

\item Now we show that $\displaystyle\limsup_{r\to\infty}{\frac{\ln^+\nu_\W(r;f)}{\ln r}}\le\sigma_{\mu_\W(\cdot;f)}$.\ \ For every $r>1$ and every $R>r$, writing $N:=\nu_\W(r;f)$, we have
\begin{align*}
	\left[\frac{R+(N-1)^2}{r+(N-1)^2}\right]^N &\le \frac{R}{r}\frac{R+1^2}{r+1^2}\cdots\frac{R+(N-1)^2}{r+(N-1)^2} \\
	&= \frac{|\tau_N(R;0)|}{|\tau_N(r;0)|} \\
	&\le \frac{\mu_\W(R;f)}{\mu_\W(r;f)}.
\end{align*}
By Lemma~\ref{munuprop} (ii) we have $\mu_\W(r;f)\ge 1$ for every sufficiently large $r$, so
\begin{align}
	\label{W8} N\ln\frac{R+(N-1)^2}{r+(N-1)^2} \le \ln^+\mu_\W(R;f),
\end{align}
In particular putting $R=2r+(N-1)^2$ in \eqref{W8}, taking $\ln^+$ and dividing by $\ln r$ on both sides, we arrive at
\[
	\frac{\ln^+ N + \ln\ln 2}{\ln r} \le \frac{\ln^+\ln^+\mu_\W(2r+(N-1)^2;f)}{\ln(2r+(N-1)^2)}\frac{\ln(2r+(N-1)^2)}{\ln r}.
\]

Now for every $\gamma<\frac{1}{\sigma_{\mu_\W(\cdot;f)}}$ and every sufficiently large $r$, we have
\begin{align}
	\label{W9}\frac{\ln^+ N + \ln\ln 2}{\ln r} \le \frac{1}{\gamma}\frac{\ln(2r+(N-1)^2)}{\ln r}.
\end{align}
We claim that $N^2\le r$ for every sufficiently large $r$, so that \eqref{W9} will give
\[
	\frac{\ln^+ N}{\ln r} \le \frac{1}{\gamma}\frac{\ln 3r}{\ln r}-\frac{\ln\ln 2}{\ln r}
\]
for every sufficiently large $r$, which implies the desired inequality on taking limit superior as $r\to\infty$.\ \ To prove this claim, we observe that if on the contrary there exists some sequence $\{r_n\}_{n\in\mathbb{N}}$ of positive real numbers increasing to $\infty$ such that $\nu_\W(r_n;f)^2>r_n$ for every $n\in\mathbb{N}$, then \eqref{W9} gives
\[
	\frac{1}{2}<\frac{\ln \nu_\W(r_n;f)}{\ln r_n}\le \frac{1}{\gamma}\frac{2\ln \nu_\W(r_n;f)+\ln 3}{\ln r_n} - \frac{\ln\ln 2}{\ln r_n},
\]
and since $\sigma<\frac{1}{2}$ enables one to choose $\gamma>2$, this implies that
\[
	\frac{1}{2}\left(1-\frac{2}{\gamma}\right)<\left(1-\frac{2}{\gamma}\right)\frac{\ln \nu_\W(r_n;f)}{\ln r_n}\le \frac{\frac{\ln 3}{\gamma}-\ln\ln 2}{\ln r_n}
\]
for every $n\in\mathbb{N}$, and so as $n\to\infty$ we get $0\ge\frac{1}{2}(1-\frac{2}{\gamma})>0$, a contradiction.

\item Finally we show that $\displaystyle\sigma_{\mu_\W(\cdot;f)}\le\limsup_{r\to\infty}{\frac{\ln^+\nu_\W(r;f)}{\ln r}}$.\ \ Write $N:=\nu_\W(r;f)$ for every $r>0$.\ \ Since $|a_N|\le 1$ for every sufficiently large $r$, we have
\begin{align*}
	\ln\mu_\W(r;f)&= \ln |a_N| + \ln r + \ln(r+1^2) + \cdots + \ln(r+(N-1)^2) \\
	&=\ln |a_N| + N\ln r + \sum_{k=1}^{N-1}{\ln\left(1+\frac{k^2}{r}\right)} \\
	&\le N\ln r + \sum_{k=1}^{N-1}{\ln(1+k^{2-\gamma})} \\
	&= N(\ln r + O(N^{2-\gamma}))
\end{align*}
as $r\to\infty$.\ \ This proves the desired inequality.
\end{enumerate}
\hfill \qed

\bigskip
The following is a new Wilson series analogue of the Lindel\"{o}f-Pringsheim theorem.\ \ It relates the order of the maximal term of a Wilson series to the rate of decay of its coefficients.\ \ In fact one can apply the same technique to obtain a similar result for Newton series under the setting in Ishizaki and Yanagihara's \cite{IY}.
\begin{theorem}
\label{WLP}
Let $\displaystyle f(x):=\sum_{k=0}^\infty{a_k\tau_k(x;0)}$ be an entire function of order $\sigma<\frac12$.\ \ Then
\[
	\sigma_{\mu_\W(\cdot;f)}=\limsup_{n\to\infty}{\frac{n\ln n}{-\ln|a_n|}}.
\]
Moreover if $\sigma<\frac13$, then $\displaystyle\sigma=\limsup_{n\to\infty}{\frac{n\ln n}{-\ln|a_n|}}$.
\end{theorem}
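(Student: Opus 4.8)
The plan is to transfer two classical facts — the Lindel\"of--Pringsheim theorem and the identity ``order of an entire function $=$ order of its maximal term'' — to the Wilson setting, by comparing $\mu_\W(\cdot;f)$ with the \emph{ordinary} maximal term of the auxiliary entire function
\[
	\tilde f(z):=\sum_{n=0}^\infty{|a_n|\,z^n}.
\]
First I would check that $\tilde f$ is entire: since $f(x)=\sum_k a_k\tau_k(x;0)$ converges at $2r$ for every $r>0$, its terms tend to $0$, and as $|\tau_k(2r;0)|=\prod_{j=0}^{k-1}(2r+j^2)\ge(2r)^k$, this forces $|a_k|(2r)^k\to 0$ for every $r>0$; hence $\limsup_k|a_k|^{1/k}=0$ and $\tilde f$ is entire. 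Writing $g(r):=\mu(r;\tilde f)=\max_n|a_n|r^n$ for its classical maximal term, the classical Lindel\"of--Pringsheim theorem \cite{Boas} gives $\sigma_{\tilde f}=\limsup_n\frac{n\ln n}{-\ln|a_n|}$, and the classical Wiman--Valiron identity \cite{Valiron3,Hayman2} gives $\limsup_r\frac{\ln^+\ln^+ g(r)}{\ln r}=\sigma_{\tilde f}$.

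The heart of the argument will be the two-sided comparison
\[
	g(r)\ \le\ \mu_\W(r;f)\ \le\ g(2r)\qquad\text{for all sufficiently large }r.
\]
The left inequality is immediate from $r(r+1^2)\cdots(r+(n-1)^2)\ge r^n$ when $r\ge1$. For the right inequality I would put $N:=\nu_\W(r;f)$; by Lemma~\ref{muorder}(i) one has $\sigma_{\mu_\W(\cdot;f)}\le\sigma<\frac12$, so $2<1/\sigma_{\mu_\W(\cdot;f)}$ and Lemma~\ref{muorder}(ii) (applied with $\gamma=2$) yields $N^2\le r$ for all large $r$; then
\[
	\mu_\W(r;f)=|a_N|\prod_{k=0}^{N-1}(r+k^2)\le|a_N|(r+N^2)^N\le|a_N|(2r)^N\le g(2r).
\]
Applying $\ln^+\ln^+$, dividing by $\ln r$, and using $\ln(2r)/\ln r\to1$, both inequalities will collapse to
\[
	\sigma_{\mu_\W(\cdot;f)}=\limsup_r\frac{\ln^+\ln^+ g(r)}{\ln r}=\sigma_{\tilde f}=\limsup_n\frac{n\ln n}{-\ln|a_n|},
\]
which is the first assertion. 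For the ``moreover'' part, if in addition $\sigma<\frac13$ then Theorem~\ref{muorderequal} gives $\sigma=\sigma_{\mu_\W(\cdot;f)}$, hence $\sigma=\limsup_n\frac{n\ln n}{-\ln|a_n|}$.

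None of the steps is genuinely hard; the one point requiring care is that invoking Lemma~\ref{muorder}(ii) inside this proof is \emph{not} circular — Lemma~\ref{muorder}(i) already bounds $\sigma_{\mu_\W(\cdot;f)}$ above by $\sigma$ with no reference to the present theorem, and that bound alone suffices to secure $N^2\le r$. The external ingredients are exactly the two standard classical facts about ordinary maximal terms cited above, plus the already-stated Theorem~\ref{muorderequal} for the last sentence.
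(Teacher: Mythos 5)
Your proposal is correct, but it takes a genuinely different route from the paper. The paper proves the two inequalities $L\le\sigma_{\mu_\W(\cdot;f)}$ and $\sigma_{\mu_\W(\cdot;f)}\le L$ directly (where $L$ is the limit superior of $\frac{n\ln n}{-\ln|a_n|}$): for the first it evaluates the $n$th term of the Wilson series at the test radius $r=n^{1/\alpha}$ and uses $\mu_\W(n^{1/\alpha};f)\le e^n$ together with $\prod_{k=0}^{n-1}(r+k^2)\ge r^n$; for the second it uses the coefficient decay $|a_n|\le n^{-\beta n}$ to show that only indices $n\le(2r)^{1/\beta}$ can contribute to the maximum, whence $\mu_\W(r;f)\le ar^{(2r)^{1/\beta}+1}$. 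You instead sandwich $\mu_\W(r;f)$ between the \emph{classical} maximal terms $g(r)$ and $g(2r)$ of the auxiliary power series $\tilde f(z)=\sum|a_n|z^n$ and delegate the rest to two classical theorems (Lindel\"of--Pringsheim, and the identity between the order of an entire function and the order of its maximal term). Both arguments ultimately rest on the same elementary comparison $r^n\le\prod_{k=0}^{n-1}(r+k^2)\le(2r)^n$, the upper bound requiring $n^2\le r$, which you correctly extract from Lemma~\ref{muorder}(ii) with $\gamma=2$ (and your remark that this is not circular is right: part (i) of that lemma gives $\sigma_{\mu_\W(\cdot;f)}\le\sigma<\frac12$ independently of Theorem~\ref{WLP}). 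Your reduction is shorter and arguably more transparent, and it makes clear why the paper's closing remark --- that the same technique yields the analogous result for Newton series --- holds; the cost is importing the classical Wiman--Valiron fact $\limsup_r\frac{\ln^+\ln^+\mu(r;\tilde f)}{\ln r}=\sigma_{\tilde f}$ as a black box, whereas the paper's computation is self-contained apart from Lindel\"of--Pringsheim (which it already invokes in the proof of Lemma~\ref{muorder}). The treatment of the final ``moreover'' clause via Theorem~\ref{muorderequal} is identical in both.
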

\begin{proof}
We denote $\displaystyle L:=\limsup_{n\to\infty}{\frac{n\ln n}{-\ln|a_n|}}$.
\begin{enumerate}
\item We first show that $L\le\sigma_{\mu_\W(\cdot;f)}$.\ \ By Lemma~\ref{muorder} (i) we have $\sigma_{\mu_\W(\cdot;f)}<\frac{1}{2}<+\infty$, so we let $\alpha\in(\sigma_{\mu_\W(\cdot;f)},+\infty)$ be arbitrary.\ \ Then for every sufficiently large $n$, we have
\[
	|a_n|n^{\frac{1}{\alpha}}(n^{\frac{1}{\alpha}}+1^2)\cdots(n^{\frac{1}{\alpha}}+(n-1)^2) \le \mu_\W(n^{\frac{1}{\alpha}};f) \le e^n.
\]
So
\begin{align*}
	\ln|a_n| &\le n-\ln n^{\frac{1}{\alpha}} - \ln(n^{\frac{1}{\alpha}}+1^2) - \cdots - \ln(n^{\frac{1}{\alpha}}+(n-1)^2) \\
	&\le n - n\ln n^{\frac{1}{\alpha}} \\
	&= n - \frac{n\ln n}{\alpha} \\
	&= -\frac{n\ln n}{\alpha}(1+o(1))
\end{align*}
as $n\to\infty$.\ \ This gives
\[
	\frac{-\ln|a_n|}{n\ln n} \ge \frac{1}{\alpha}(1+o(1))
\]
as $n\to\infty$, and so $L\le\alpha$.\ \ Since $\alpha\in(\sigma_{\mu_\W(\cdot;f)},+\infty)$ was arbitrary, we have $L\le\sigma_{\mu_\W(\cdot;f)}$.
\item Next we show that $\sigma_{\mu_\W(\cdot;f)}\le L$.\ \ By the last paragraph and Lemma~\ref{muorder} (i), we have $L<\frac{1}{2}$, so we let $\beta\in(2,\frac{1}{L})$ be arbitrary.\ \ Then $|a_n|\le n^{-\beta n}$ for every sufficiently large $n$.\ \ Now for each $r>0$, since $\beta>2$, we have $n^\beta-n^2\ge\frac{1}{2}n^\beta\ge r$ for every sufficiently large $n$, so for these $n$ we have
\begin{align*}
	|a_n|r(r+1^2)\cdots(r+(n-1)^2) &\le n^{-\beta n}r(r+1^2)\cdots(r+(n-1)^2) \\
	&\le (r+n^2)^{-n}r(r+1^2)\cdots(r+(n-1)^2) \\
	&\le 1.
\end{align*}
Let $\displaystyle a:=\max_{n\in\mathbb{N}_0}{|a_n|}$.\ \ Then for every sufficiently large $r>0$, we have
\begin{align*}
	\mu_\W(r;f) &= \max\{|a_n|r(r+1^2)\cdots(r+(n-1)^2):n\le(2r)^{\frac{1}{\beta}}\} \\
	&\le ar^{(2r)^{\frac{1}{\beta}}+1},
\end{align*}
and so
\[
	\sigma_{\mu_\W(\cdot;f)} = \limsup_{r\to\infty}{\frac{\ln^+\ln^+ \mu_\W(r;f)}{\ln r}} \le \frac{1}{\beta}.
\]
Since $\beta\in(2,\frac{1}{L})$ was arbitrary, we have $\sigma_{\mu_\W(\cdot;f)}\le L$.
\end{enumerate}
If $\sigma<\frac13$, then $\sigma_{\mu_\W(\cdot;f)}=\sigma$ by Theorem~\ref{muorderequal}, so the final statement follows.
\end{proof}

\section{Proofs of the main results}
\label{sec:Proofs}

In the remainder of this paper, we will focus on entire functions of order smaller than $\frac{1}{3}$ and follow an approach that is similar to \cite{IY}, which deals with Newton series expansions.\ \ We will show that an entire function $f$ behaves locally like a polynomial consisting of the few terms around the maximal term in its Wilson series expansion.\ \ To do this, we write $N:=\nu_\W(r;f)$ and aim to show that those terms $a_n\tau_n(x;0)$ in the Wilson series that are far away from the maximal term $a_N\tau_N(x;0)$ are small, by defining \textit{comparison sequences} $\{\alpha_n\}_n$ and $\{\rho_n\}_n$ and comparing the ratio $\left|\frac{a_n\tau_n(r;0)}{a_N\tau_N(r;0)}\right|$ with $\frac{\alpha_n\rho_N^n}{\alpha_N\rho_N^N}$, whose growth can be controlled.\ \ The construction of the comparison sequences follows from K\"{o}vari \cite{Kovari2}.

\begin{definition}
\label{testseq}
	In the remainder of this paper, we pick a $\delta>0$ and define \textbf{\textit{comparison sequences}} $\{\alpha_n\}_{n\in\mathbb{N}_0}$ and $\{\rho_n\}_{n\in\mathbb{N}_0}$ by
	\[
		\alpha_n:=e^{\int_0^n{\alpha(t)dt}} \hspace{20px}\mbox{and}\hspace{20px} \rho_n:=e^{-\alpha(n)},
	\]
	where $\alpha:[0,\infty)\to\mathbb{R}$ is a $C^1$ function which is linear on $[0,t_0]$ and satisfies that $\alpha'(t)=-\frac{1}{t\ln t(\ln\ln t)^{1+\delta}}$ on $[t_0,+\infty)$, and $t_0\ge e^e$ is a number such that the range of $\alpha$ is contained in $[-\ln 2, 0]$.
\end{definition}

We immediately see that $\{\alpha_n\}_{n\in\mathbb{N}_0}$ is a logarithmically convex sequence.\ \ We also have $\rho_0\in(1,\frac{\alpha_0}{\alpha_1})$ and $\rho_n\in(\frac{\alpha_{n-1}}{\alpha_n},\frac{\alpha_n}{\alpha_{n+1}})$ for every $n\in\mathbb{N}$, so that $\{\rho_n\}_{n\in\mathbb{N}_0}$ is an increasing sequence bounded above by $2$.

\bigskip
We are interested in only those radii $r$ on which $\left|\frac{a_n\tau_n(r;0)}{a_N\tau_N(r;0)}\right|$ can be controlled by $\frac{\alpha_n\rho_N^n}{\alpha_N\rho_N^N}$, so we give them a name.

\begin{definition}
\label{taunormal}
Let $\displaystyle f(x):=\sum_{k=0}^\infty{a_k \tau_k(x;0)}$ be an entire function of order $\sigma<\frac{1}{3}$ and let $\gamma\in(3,\frac{1}{\sigma})$.\ \ A positive real number $r$ is said to be \textbf{\textit{$\tau$-normal}} (for the Wilson series $f$, with respect to $\gamma$ and the comparison sequences $\{\alpha_n\}_{n\in\mathbb{N}_0}$ and $\{\rho_n\}_{n\in\mathbb{N}_0}$) if there exists $N\in\mathbb{N}$ such that for every $n\in\mathbb{N}_0$,
\begin{align*}
	|a_n \tau_n(r;0)| &\le |a_N \tau_N(r;0)|\frac{\alpha_n}{\alpha_N}\rho_N^{n-N} \hspace{60px}&&\mbox{if $n\ge N$, and} \\
	|a_n \tau_n(r;0)| &\le |a_N \tau_N(r;0)|(1+\varepsilon_{n,N})\frac{\alpha_n}{\alpha_N}\rho_N^{n-N} &&\mbox{if $n<N$},
\end{align*}
where $\varepsilon_{n,N}:=\frac{n^2}{N^\gamma}+\cdots+\frac{(N-1)^2}{N^\gamma}<\frac{1}{3N^{\gamma-3}}$.\ \ Positive real numbers that are not $\tau$-normal are said to be \textbf{\textit{$\tau$-exceptional}}.
\end{definition}

The inequality requirements in Definition~\ref{taunormal} are motivated by the following theorem, which asserts that most non-negative numbers are $\tau$-normal.

\begin{theorem}
\label{tauexceptional}
Let $f$ be an entire function of order $\sigma<\frac{1}{3}$ and let $\gamma\in(3,\frac{1}{\sigma})$.\ \ Then the set
\[
	E:=\{r\in[1,\infty): \mbox{r is $\tau$-exceptional for the Wilson series $f$}\}
\]
has finite logarithmic measure.
\end{theorem}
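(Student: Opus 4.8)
The plan is to follow the Wiman--Valiron scheme used by Ishizaki and Yanagihara for Newton series \cite{IY}, with comparison sequences as in Kövari \cite{Kovari2}, carrying out the modifications forced by the Wilson basis $\tau_n(r;0)=\prod_{k=0}^{n-1}(r+k^2)$. The starting point is a convexity observation. Writing $b_n(r):=|a_n\tau_n(r;0)|$, one has $\frac{d}{d\ln r}\ln b_n(r)=\sum_{k=0}^{n-1}\frac{r}{r+k^2}$, which is nondecreasing both in $\ln r$ and in $n$; hence each $\ln b_n$ is convex in $\ln r$, and $t\mapsto\ln\mu_\W(e^t;f)=\max_n\ln b_n(e^t)$ is convex and nondecreasing, with derivative (off the jump set of $\nu_\W$) equal to $\sum_{k=0}^{N-1}\frac{r}{r+k^2}$ where $N=\nu_\W(r;f)$. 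By Lemma~\ref{muorder}(ii), our choice $\gamma\in(3,\tfrac1\sigma)$ gives $\nu_\W(r;f)^2\le r$ for all large $r$, so each summand is then $\ge\tfrac12$ and therefore
\[
	\tfrac12\,\nu_\W(r;f)\ \le\ \frac{d}{d\ln r}\ln\mu_\W(r;f)\ \le\ \nu_\W(r;f)\qquad\text{for large }r.
\]
This puts us in exactly the classical Wiman--Valiron situation (maximal term $\leftrightarrow\mu_\W$, central index $\leftrightarrow\nu_\W$, comparable to the logarithmic derivative of $\mu_\W$), and the remaining steps run along classical lines with $\prod_{k}(r+k^2)$ replacing $r^n$ wherever powers of the basis occur.

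Next I introduce the good set. Put $\rho(r):=\exp\!\left(\dfrac{c}{\nu_\W(r;f)\ln\nu_\W(r;f)(\ln\ln\nu_\W(r;f))^{1+\delta}}\right)$ for a small fixed $c>0$; this is the scale on which the peak profile $\frac{\alpha_n}{\alpha_N}\rho_N^{\,n-N}$ — which near $n=N$ behaves like $\exp\!\big(-\tfrac12(n-N)^2/(N\ln N(\ln\ln N)^{1+\delta})\big)$ — is still non-negligible. Let
\[
	G:=\Big\{r\ge r_0:\ \ln\mu_\W(\rho(r)\,r;f)\le 2\ln\mu_\W(r;f)\Big\}.
\]
A Borel--Nevanlinna type growth lemma (as in \cite{Hayman2,IY}), applied to the nondecreasing function $r\mapsto\ln\mu_\W(r;f)$ together with the convexity above, bounds the logarithmic measure of $[r_0,\infty)\setminus G$ by a constant times $\displaystyle\int_{t_0}^{\infty}\frac{dt}{t\ln t(\ln\ln t)^{1+\delta}}$, which after the substitution $u=\ln\ln t$ equals $\displaystyle\int_{\ln\ln t_0}^{\infty}u^{-1-\delta}\,du<\infty$; this convergence — the very one that forces $\alpha$, with $\alpha'(t)=-\frac1{t\ln t(\ln\ln t)^{1+\delta}}$, to have range inside $[-\ln2,0]$ — is exactly where the hypothesis $\delta>0$ enters.

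It then remains to show that every sufficiently large $r\in G$ is $\tau$-normal; once this is done, $E$ is contained in $[1,r_0)$ together with the finite-logarithmic-measure set $[r_0,\infty)\setminus G$, finishing the proof. The terms with $n$ far from $N$ require no exceptional set: as in step~(1) of the proof of Lemma~\ref{muorder} the Wilson coefficients satisfy $|a_n|\le Ke^{2n}n^{-\gamma n}$ for large $n$, and combining this with the elementary bound for $\tau_n(r;0)$ obtained by splitting the product at $k\approx\sqrt r$ shows that $b_n(r)$ decays super-exponentially in $n$ once $n$ exceeds a threshold $n_1(r)$ of order $\sqrt r$ (in particular $n_1(r)>N$ for large $r$, since $N\le r^{1/\gamma}$); meanwhile $b_N(r)\frac{\alpha_n}{\alpha_N}\rho_N^{\,n-N}\ge\mu_\W(r;f)\,2^{-n}$ because $\alpha_n\ge2^{-n}$ and $\rho_N>1$, so the required inequality holds there automatically. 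For $N\le n<n_1(r)$ — a range that includes all $n$ at polylogarithmic distance from $N$ — and for $n<N$, one estimates $b_n(r)$ by comparing $b_n$ with $b_N$ at the nearby radii $\rho(r)^{\pm1}r$, controlling $b_n(r')/b_N(r')$ through the ratios $R_j(r')=\frac{|a_{j+1}|}{|a_j|}(r'+j^2)$ and then using $r\in G$, which — via the convexity of the first step — limits how far $\nu_\W$, and hence the factors $\tau_n(\rho(r)^{\pm1}r;0)/\tau_n(r;0)$, can move between $r$ and $\rho(r)^{\pm1}r$; the log-convexity of $\{\alpha_n\}$ and the inclusions $\rho_n\in(\tfrac{\alpha_{n-1}}{\alpha_n},\tfrac{\alpha_n}{\alpha_{n+1}})$ are what let the two one-sided estimates be assembled into the single shape $\frac{\alpha_n}{\alpha_N}\rho_N^{\,n-N}$.

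The main obstacle is precisely this last point: showing that a single membership $r\in G$ — hence a single application of the growth lemma at a single scale $\rho(r)-1$ — verifies the $n\ge N$ inequalities and the slightly weaker $n<N$ inequalities simultaneously and uniformly in $n$; equivalently, that the \emph{fixed} comparison sequences of Definition~\ref{testseq} are wide enough near their peak to majorise the actual term-shape of $f$ at every such $r$. The left tail $n<N$ is the most delicate, because there the monotonicity of $\nu_\W$ works against us (decreasing the radius can lower the central index, loosening the control on $\prod_{k}(r+k^2)$); this is exactly why Definition~\ref{taunormal} carries the extra slack $1+\varepsilon_{n,N}$ with $\varepsilon_{n,N}=\sum_{k=n}^{N-1}k^2/N^\gamma<\tfrac1{3N^{\gamma-3}}$, and handling it is where the slow profile $\alpha'(t)=-1/(t\ln t(\ln\ln t)^{1+\delta})$ is exploited, following Kövari.
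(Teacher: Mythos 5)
Your proposal has a genuine gap at exactly the point you flag as ``the main obstacle'': the implication ``$r\in G$ $\Rightarrow$ $r$ is $\tau$-normal'' is asserted but not proved, and the mechanism you propose for it cannot work as described. The $\tau$-normality inequalities demand, uniformly in $n$, a \emph{Gaussian} profile: for $n=N+k$ one needs $|a_n\tau_n(r;0)|/\mu_\W(r;f)\le\frac{\alpha_n}{\alpha_N}\rho_N^{\,k}\le e^{-\frac12k^2b(N+k)}$, i.e.\ a quadratic exponent in $k$. A single comparison of $b_n$ at the single nearby radius $\rho(r)r$ with $\ln\rho(r)\asymp b(N)$ yields at best
\begin{equation*}
 b_n(r)\ \le\ \mu_\W(\rho(r)r)\prod_{j=0}^{n-1}\frac{r+j^2}{\rho(r)r+j^2}\ \le\ \mu_\W(r;f)\,\rho(r)^{-(n-N')}(1+o(1)),
\end{equation*}
an exponent that is \emph{linear} in $n-N$ with slope $b(N)$. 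To extract the quadratic exponent you would have to compare at a different radius for each $n$ (roughly $\ln\rho\asymp(n-N)b(N)$) and then make all of these comparisons hold simultaneously off a common small set --- which reintroduces the very difficulty (infinitely many exceptional sets, one per $n$) that the good-set device was supposed to remove. So membership in $G$, as defined, is strictly weaker than $\tau$-normality, and the covering $E\subseteq[1,r_0)\cup([r_0,\infty)\setminus G)$ is not justified.

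The paper closes this gap with K\"ovari's auxiliary-function device rather than a growth lemma. From the jump radii $\{r_n\}$ of $\nu_\W(\cdot;f)$ and the continuity of $\mu_\W(\cdot;f)$ one gets the telescoped bound $|a_n|/|a_0|\le 1/\bigl(r_1(r_2+1^2)\cdots(r_n+(n-1)^2)\bigr)$, whence $A_n:=|a_n|/\alpha_n\le K_0\frac{|a_0|}{\alpha_0}\frac{2^n}{(n!)^{\gamma}}$ and $F(x):=\sum_kA_k\tau_k(x;0)$ is entire of order at most $1/\gamma$. The single condition ``$r=\rho\rho_M$ with $M=\nu_\W(\rho;F)$'' then packages \emph{all} the $n$-inequalities at once, because $M$ being the central index of $F$ at $\rho$ means $A_n|\tau_n(\rho;0)|\le A_M|\tau_M(\rho;0)|$ for every $n$ simultaneously; a direct computation converts this into the two one-sided $\tau$-normality bounds (the factor $1+\varepsilon_{n,M}$ absorbing the loss on the left tail via Lemma~\ref{muorder}(ii)). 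The exceptional set is then covered by $\bigcup_k[R_k\rho_{k-1},R_k\rho_k)$, whose logarithmic measure telescopes to $\ln(\rho_{n+1}/\rho_0)\le\ln 2$; the hypothesis $\delta>0$ enters only through the boundedness of $\{\rho_n\}$, not through a Borel--Nevanlinna estimate. Your peripheral observations (convexity of $\ln b_n$ in $\ln r$, the unconditional super-exponential decay of the far tail $n\gtrsim\sqrt r$, and the role of $\int\frac{dt}{t\ln t(\ln\ln t)^{1+\delta}}<\infty$) are correct, but they do not substitute for the missing central step.
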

\begin{proof}
We write $\displaystyle f(x)=\sum_{k=0}^\infty{a_k \tau_k(x;0)}$.\ \ Since $\nu_\W(\cdot;f)$ is integer-valued, non-decreasing and right-continuous by Lemma~\ref{munuprop}, we let $\{r_n\}_{n\in\mathbb{N}_0}$ be the monotonic increasing sequence of non-negative numbers such that $r_0:=0$ and $\nu_\W(r;f)=n$ for every $r\in[r_n,r_{n+1})\setminus\{0\}$.\ \ (If $n$ is not in the range of $\nu_\W(\cdot;f)$, then $r_{n+1}=r_n$.)

Now by the choice of $\{r_n\}_{n\in\mathbb{N}_0}$ and the continuity of $\mu_\W(\cdot;f)$ by Lemma~\ref{munuprop}, for every $j\in\mathbb{N}_0$ and $k\in\mathbb{N}$ satisfying $r_j<r_{j+1}=\cdots=r_{j+k}$, we have
\begin{align*}
	&\ \ \ \,|a_{j+k}|r_{j+k}(r_{j+k}+1^2)\cdots(r_{j+k}+(j+k-1)^2) \\
	&\le \mu_\W(r_{j+k};f) = \mu_\W(r_{j+1};f) \\
	&= \lim_{r\to r_{j+1}^-}{\mu_\W(r;f)} = \lim_{r\to r_{j+1}^-}{|a_j|r(r+1^2)\cdots(r+(j-1)^2)} \\
	&= |a_j|r_{j+1}(r_{j+1}+1^2)\cdots(r_{j+1}+(j-1)^2) \\
	&= |a_j|r_{j+k}(r_{j+k}+1^2)\cdots(r_{j+k}+(j-1)^2).
\end{align*}
This gives
\begin{align*}
	\frac{|a_{j+k}|}{|a_j|} &\le \frac{1}{(r_{j+k}+j^2)\cdots(r_{j+k}+(j+k-1)^2)} \\
	&= \frac{1}{(r_{j+1}+j^2)\cdots(r_{j+k}+(j+k-1)^2)}
\end{align*}
whenever $r_j<r_{j+1}=\cdots=r_{j+k}$.\ \ So for every $n\in\mathbb{N}_0$, taking products for the appropriate $j$'s we get
\begin{align}
\label{W1}\frac{|a_n|}{|a_0|} \le \frac{1}{r_1(r_2+1^2)\cdots(r_n+(n-1)^2)}.
\end{align}

Since $\rho_n\in(\frac{\alpha_{n-1}}{\alpha_n},\frac{\alpha_n}{\alpha_{n+1}})$ for every $n\in\mathbb{N}$, we have
\begin{align}
\label{W2}\frac{\alpha_n}{\alpha_0}\ge\frac{1}{\rho_1\rho_2\cdots\rho_n}
\end{align}
and so combining \eqref{W1} and \eqref{W2} we obtain
\[
	\frac{|a_n|}{\alpha_n}\le\frac{|a_0|}{\alpha_0}\frac{\rho_1}{r_1}\frac{\rho_2}{r_2+1^2}\cdots\frac{\rho_n}{r_n+(n-1)^2}.
\]
Lemma~\ref{muorder} (ii) implies that $r_n>n^\gamma$ for every sufficiently large $n$, so there exists $K_0>0$ such that
\begin{align}
	\label{W13}A_n:=\frac{|a_n|}{\alpha_n}\le K_0\frac{|a_0|}{\alpha_0}\frac{2^n}{(n!)^\gamma}
\end{align}
for every sufficiently large $n$.\ \ \eqref{W13} implies that
\[
	\limsup_{n\to\infty}\frac{n\ln n}{-\ln|A_n|} \le \lim_{n\to\infty}\frac{n\ln n}{\gamma\ln(n!)-n\ln 2-\ln(K_0\frac{|a_0|}{\alpha_0})} = \frac{1}{\gamma}
\]
so by Theorem~\ref{WLP}, the function $F$ defined by the Wilson series
\[
	F(x):=\sum_{k=0}^\infty{A_k\tau_k(x;0)}
\]
is an entire function of order at most $\frac{1}{\gamma}$.

Now suppose that $\rho>0$ and that $M=\nu_\W(\rho;F)\ge 1$.\ \ Then noting that $1<\rho_M<2$, for every $n>M$ we have
{\small
\begin{align*}
	\frac{|a_n\tau_n(\rho\rho_M;0)|}{|a_M\tau_M(\rho\rho_M;0)|} &= \frac{\alpha_n A_n|\tau_n(\rho\rho_M;0)|}{\alpha_M A_M|\tau_M(\rho\rho_M;0)|} = \frac{\alpha_n A_n}{\alpha_M A_M}(\rho\rho_M +M^2)\cdots(\rho\rho_M + (n-1)^2) \\
	&\le \frac{\alpha_n A_n}{\alpha_M A_M}(\rho +M^2)\cdots(\rho + (n-1)^2)\rho_M^{n-M} = \frac{\alpha_n A_n|\tau_n(\rho;0)|}{\alpha_M A_M|\tau_M(\rho;0)|}\rho_M^{n-M} \\
	&\le \frac{\alpha_n}{\alpha_M}\rho_M^{n-M} < 1,
\end{align*}}%
while for every $n<M$, since $\displaystyle\sum_{k=n}^{M-1}{\frac{k^2}{M^\gamma}}<\frac{1}{3M^{\gamma-3}}<1$, we have
{\small
\begin{align*}
	\frac{|a_n\tau_n(\rho\rho_M;0)|}{|a_M\tau_M(\rho\rho_M;0)|} &= \frac{\alpha_n A_n|\tau_n(\rho\rho_M;0)|}{\alpha_M A_M|\tau_M(\rho\rho_M;0)|} = \frac{\alpha_n A_n}{\alpha_M A_M}\frac{1}{(\rho\rho_M +n^2)\cdots(\rho\rho_M + (M-1)^2)} \\
	&= \frac{\alpha_n A_n|\tau_n(\rho;0)|}{\alpha_M A_M|\tau_M(\rho;0)|}\rho_M^{n-M}\frac{(\rho\rho_M +\rho_M n^2)\cdots(\rho\rho_M + \rho_M(M-1)^2)}{(\rho\rho_M +n^2)\cdots(\rho\rho_M + (M-1)^2)} \\
	&\le \frac{\alpha_n}{\alpha_M}\rho_M^{n-M}\frac{(1 + \frac{n^2}{\rho})\cdots(1 + \frac{(M-1)^2}{\rho})}{(1 + \frac{n^2}{\rho\rho_M})\cdots(1 + \frac{(M-1)^2}{\rho\rho_M})} \le \frac{\alpha_n}{\alpha_M}\rho_M^{n-M}\prod_{k=n}^{M-1}{\left(1 + \frac{k^2}{2\rho}\right)} \\
	&\le \frac{\alpha_n}{\alpha_M}\rho_M^{n-M}\prod_{k=n}^{M-1}{\left(1 + \frac{k^2}{2M^\gamma}\right)} \le \frac{\alpha_n}{\alpha_M}\rho_M^{n-M}(1+\varepsilon_{n,M}),
\end{align*}}%
where in the third last step we have used the inequality $\frac{1+2a}{1+a}\le 1+a$ which holds for every $a>0$, in the second last step we have used the inequality $\rho>M^\gamma$ which follows from Lemma~\ref{muorder} (ii), and in the last step we have used the inequality $\prod_k{(1+\frac{\lambda_k}{2})}\le 1+\sum_k{\lambda_k}$ which holds for every sequence $\{\lambda_k\}_k$ of non-negative numbers with $\sum_k{\lambda_k}<1$.\ \ We have thus shown that $r$ is $\tau$-normal for $f$ if there exists $\rho>0$ such that $r=\rho\rho_M$ where $M=\nu_\W(\rho;F)$, i.e. if there exists $M\in\mathbb{N}_0$ such that $\nu_\W(\frac{r}{\rho_M};F)=M$.\ \ Therefore if we let $\{R_n\}_{n\in\mathbb{N}}$ be the monotonic increasing sequence such that $\nu_\W(R;F)=n$ for every $R\in[R_n,R_{n+1})$, then
\[
	E\subseteq \bigcup_{k\in\mathbb{N}}{[R_k\rho_{k-1},R_k \rho_k)}.
\]
Now for every $r\in[R_n \rho_n, R_{n+1}\rho_n)$, we have $r=R\rho_n$ for some $R\in[R_n,R_{n+1})$ and so $\nu_\W(r;f)=n$ by the above computations.\ \ So by the definition of $\{r_n\}_{n\in\mathbb{N}_0}$ we have $r_n\le R_n\rho_n$.\ \ Therefore whenever $r\in[r_n,r_{n+1})$, i.e. $\nu_\W(r;f)=n$, we must have $r<R_{n+2}\rho_{n+1}$, and so
\[
	E\cap[1,r] \subseteq E\cap[1,R_{n+2}\rho_{n+1}) \subseteq \bigcup_{k=1}^{n+1}{[R_k\rho_{k-1},R_k \rho_k)},
\]
which implies that
\[
	\logmea(E\cap[1,r]) \le \sum_{k=1}^{n+1}{\int_{R_k\rho_{k-1}}^{R_k\rho_k}{\frac{dt}{t}}} = \ln\frac{\rho_{n+1}}{\rho_0}.
\]
Since $\{r_n\}_{n\in\mathbb{N}_0}$ is unbounded and $\{\rho_n\}_{n\in\mathbb{N}_0}$ is bounded above, it follows that $\logmea E<+\infty$.
\end{proof}

We call the set $E$ in Theorem~\ref{tauexceptional} the \textit{$\tau$-exceptional set} for $f$.\ \ We note that $E$ depends not only on $f$, but also on the choice of $\gamma$ as well as the construction of the comparison sequences $\{\alpha_n\}$ and $\{\rho_n\}$ (which depends on the choice of $\delta$).

\begin{lemma}
\label{WVestimate}
Let $\displaystyle f(x)=\sum_{k=0}^\infty{a_k \tau_k(x;0)}$ be a transcendental entire function of order $\sigma<\frac13$, let $\gamma\in(3,\frac{1}{\sigma})$, and let $E$ be the $\tau$-exceptional set for $f$.\ \ Then for every sufficiently large $r\in(0,\infty)\setminus E$ we have
\[
	\frac{|a_{N+k}\tau_{N+k}(r;0)|}{\mu_\W(r;f)} \le e^{-\frac{1}{2}k^2 b(N+k)}
\]
for every $k\in\mathbb{N}$ and
\[
	\frac{|a_{N-k}\tau_{N-k}(r;0)|}{\mu_\W(r;f)} \le \left(1+\frac{1}{3N^{\gamma-3}}\right)e^{-\frac{1}{2}k^2 b(N)}
\]
for every $k\in\{0,1,\ldots,N-1\}$, where $N=\nu_\W(r;f)$ and $b(N):=\frac{1}{N\ln N (\ln\ln N)^{1+\delta}}$.
\end{lemma}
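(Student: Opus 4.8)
The plan is to begin from the hypothesis $r\in(0,\infty)\setminus E$: by Theorem~\ref{tauexceptional} this means $r$ is $\tau$-normal for $f$, so the two inequalities of Definition~\ref{taunormal} hold with $N=\nu_\W(r;f)$. Since $N=\nu_\W(r;f)$ we also have $\mu_\W(r;f)=|a_N\tau_N(r;0)|$, and since $f$ is transcendental Lemma~\ref{munuprop}(i) gives $N\to\infty$ as $r\to\infty$, so we may assume $r$ is large enough that $N$ exceeds the constant $t_0$ of Definition~\ref{testseq}. Dividing the inequalities of Definition~\ref{taunormal} through by $\mu_\W(r;f)$, and using the crude uniform bound $1+\varepsilon_{N-k,N}<1+\frac{1}{3N^{\gamma-3}}$ for the second one, reduces both assertions to the purely arithmetic estimates
\[
	\frac{\alpha_{N+k}}{\alpha_N}\rho_N^{k}\le e^{-\frac12 k^2 b(N+k)}\quad(k\in\mathbb{N})
	\qquad\text{and}\qquad
	\frac{\alpha_{N-k}}{\alpha_N}\rho_N^{-k}\le e^{-\frac12 k^2 b(N)}\quad(0\le k\le N-1).
\]

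To prove these I would rewrite the ratios using Definition~\ref{testseq}, namely $\alpha_n=e^{\int_0^n\alpha(t)\,dt}$ and $\rho_n=e^{-\alpha(n)}$, which gives
\[
	\frac{\alpha_{N+k}}{\alpha_N}\rho_N^{k}=\exp\Bigl(\int_N^{N+k}(\alpha(t)-\alpha(N))\,dt\Bigr),\qquad
	\frac{\alpha_{N-k}}{\alpha_N}\rho_N^{-k}=\exp\Bigl(-\int_{N-k}^N(\alpha(t)-\alpha(N))\,dt\Bigr).
\]
Writing $\alpha(t)-\alpha(N)=\int_N^t\alpha'(s)\,ds$ and noting that $-\alpha'(s)=b(s)=\frac{1}{s\ln s(\ln\ln s)^{1+\delta}}$ for $s\ge t_0$ while $-\alpha'$ equals the positive constant $b(t_0)$ on $[0,t_0]$ — so that $-\alpha'$ is non-increasing on $[0,\infty)$ and $b$ is decreasing on $[t_0,\infty)$ — the monotonicity does all the work. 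For $s\in[N,N+k]$ one has $-\alpha'(s)\ge b(N+k)$, hence $\alpha(t)-\alpha(N)\le-(t-N)b(N+k)$, and integrating once more over $t\in[N,N+k]$ gives $\int_N^{N+k}(\alpha(t)-\alpha(N))\,dt\le-\frac12 k^2 b(N+k)$. For $s\le N$ (with $N\ge t_0$) one has $-\alpha'(s)\ge-\alpha'(N)=b(N)$, hence $\alpha(t)-\alpha(N)\ge(N-t)b(N)$ on $[N-k,N]$, and integrating gives $\int_{N-k}^N(\alpha(t)-\alpha(N))\,dt\ge\frac12 k^2 b(N)$. Exponentiating these two inequalities yields the displayed arithmetic estimates, and combining with the first paragraph completes the proof.

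I do not expect a genuine obstacle here; the argument is essentially bookkeeping once one sees that dividing the $\tau$-normality inequalities by $\mu_\W(r;f)=|a_N\tau_N(r;0)|$ is the right opening move. The only points needing care are: ensuring all estimates are invoked only for $r$ so large that $N\ge t_0$, so that $-\alpha'(s)\ge b(N)$ for every $s\le N$ and $b$ is decreasing beyond $N$ (this is exactly where transcendentality of $f$ enters, via $N\to\infty$); treating the case $k=0$ of the second estimate separately, where the left-hand side is simply $1$; and replacing $\varepsilon_{N-k,N}$ by its uniform upper bound $\frac{1}{3N^{\gamma-3}}$ rather than tracking its dependence on $k$.
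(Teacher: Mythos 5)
Your proposal is correct and follows essentially the same route as the paper: reduce to the $\tau$-normality inequalities of Definition~\ref{taunormal} (using $N\to\infty$ to ensure $N\ge t_0$, and the uniform bound $\varepsilon_{n,N}<\frac{1}{3N^{\gamma-3}}$), then establish $\frac{\alpha_{N\pm k}}{\alpha_N}\rho_N^{\pm k}\le e^{-\frac12 k^2 b(\cdot)}$ by writing the ratios as exponentials of integrals of $\alpha(t)-\alpha(N)$ and exploiting the monotonicity of $-\alpha'$. The paper's only cosmetic difference is that it passes through the intermediate bound $\alpha(t)-\alpha(N)\le(t-N)\alpha'(t)$ before taking the minimum of $|\alpha'|$ over the relevant interval, whereas you bound $-\alpha'(s)$ below by $b(N+k)$ resp.\ $b(N)$ directly; these are the same estimate.
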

\begin{proof}
Since $\displaystyle\lim_{r\to\infty}N=+\infty$, we let $r>0$ be sufficiently large so that $N\ge t_0$, where $t_0$ is the number as in Definition~\ref{testseq}.\ \ From the definition of the comparison sequences $\{\alpha_n\}_{n\in\mathbb{N}_0}$ and $\{\rho_n\}_{n\in\mathbb{N}_0}$, we have
\begin{align*}
	\frac{\alpha_{N+k}}{\alpha_N}\rho_N^k &= e^{\int_N^{N+k}{(\alpha(t)-\alpha(N))\,dt}} \le e^{\int_N^{N+k}{(t-N)\alpha'(t)\,dt}} \\
	&\le e^{-\frac{1}{2}k^2\min\{|\alpha'(t)|:t\in[N,N+k]\}} = e^{-\frac{1}{2}k^2 b(N+k)}
\end{align*}
for every $k\in\mathbb{N}$, and
\begin{align*}
	\frac{\alpha_{N-k}}{\alpha_N}\rho_N^{-k} &= e^{-\int_{N-k}^N{(\alpha(t)-\alpha(N))\,dt}} \le e^{-\int_{N-k}^N{(t-N)\alpha'(t)\,dt}} \\
	&\le e^{-\frac{1}{2}k^2\min\{|\alpha'(t)|:t\in[N-k,N]\}} = e^{-\frac{1}{2}k^2 b(N)}
\end{align*}
for every $k\in\{0,1,\ldots,N-1\}$.\ \ So the result follows from Definition~\ref{taunormal}.
\end{proof}

We are now ready to prove Theorem~\ref{muorderequal} as well as the main Theorem~\ref{tail}.

\bigskip
\noindent \textit{Proof of Theorem~\ref{muorderequal}.}\ \ Given an entire function $f$ of order $\sigma<\frac13$, we let $\gamma\in(3,\frac{1}{\sigma})$ and let $E$ be the $\tau$-exceptional set for $f$.\ \ Then for every $\varepsilon>0$, one can deduce that
\begin{align}
\label{Mbound}
	\mu_\W(r;f) \le K(r)M(r;f) \le \mu_\W(r;f) [\ln^+\mu_\W(r;f)]^{\frac12+\varepsilon}
\end{align}
for every sufficiently large $r\in(0,\infty)\setminus E$, where $K(r):=K_{\nu_\W(r;f)}$ as defined in the proof of Lemma~\ref{gg}, so that $K(r)$ decreases to $1$ as $r\to\infty$.\ \ These inequalities \eqref{Mbound} are clear if $f$ is a polynomial.\ \ If $f$ is transcendental, then the first inequality in \eqref{Mbound} follows from Lemma~\ref{muorder} (ii) and Lemma~\ref{gg}, while the second inequality in \eqref{Mbound} follows from Lemma~\ref{WVestimate} and similar arguments as in \cite[pp. 330--334]{Hayman2}.\ \ These inequalities \eqref{Mbound} show that $\sigma_{\mu_\W(\cdot;f)}=\sigma$.
\hfill \qed

\bigskip
\noindent \textit{Proof of Theorem~\ref{tail}.}\ \ The proof is similar to the one of \cite[Theorem 3.3]{IY}.\ \ We take $E$ to be the $\tau$-exceptional set for $f$.\ \ Then we let $\eta\in(0,\frac12]$ be a number to be determined later, and divide the sum into four parts,
\begin{align*}
	&\ \ \ \,\sum_{k:|k-N|\ge \kappa}{k^h|a_k\tau_k(r;0)|} \\
	&= \left(\sum_{k:k\le(1-\eta)N}+\sum_{k:(1-\eta)N<k\le N-\kappa}+\sum_{k:N+\kappa\le k<(1+\eta)N}+\sum_{k:k\ge(1+\eta)N}\right){k^h|a_k\tau_k(r;0)|}.
\end{align*}
\begin{enumerate}[(i)]
	\item For $k\ge(1+\eta)N$ and $r\notin E$, let $p:=k-N$.\ \ Lemma~\ref{WVestimate} gives
		\[
			\frac{k^h|a_k\tau_k(r;0)|}{|a_N \tau_N(r;0)|} \le e^{-\frac{1}{2}p^2 b(N+p)+h\ln(N+p)}.
		\]
		Since $\displaystyle\lim_{r\to\infty}N=+\infty$ by Lemma~\ref{munuprop} and since $p\ge\eta N$, we have
		\begin{align*}
			&\ \ \ \,-\frac{1}{2}p^2 b(N+p)+h\ln(N+p) \\
			&\le -\frac{1}{2}\frac{\eta}{1+\eta}\frac{p}{\ln(N+p)(\ln\ln(N+p))^{1+\delta}} + h\ln p + h\ln\left(\frac{1}{\eta}+1\right) \\
			&\le -\sqrt p
		\end{align*}
		for every sufficiently large $r$.\ \ Therefore
		\begin{align*}
			&\ \ \ \,\frac{1}{|a_N \tau_N(r;0)|}\sum_{k:k\ge(1+\eta)N}{k^h|a_k\tau_k(r;0)|} \le \sum_{p:p\ge\eta N}{e^{-\sqrt p}} \le \int_{\eta N-1}^\infty{e^{-\sqrt t}\,dt} \\
			&= \int_0^\infty{e^{-\sqrt {t+\eta N-1}}\,dt} \le \int_0^\infty{e^{1-\frac{1}{2}\sqrt t-\frac{1}{2}\sqrt{\eta N}}\,dt} = O(e^{-\frac{1}{2}\sqrt{\eta N}})
		\end{align*}
		as $r\to\infty$ and $r\in(0,\infty)\setminus E$.
	\item For $k\le(1-\eta)N$ and $r\notin E$, let $p:=N-k$.\ \ Lemma~\ref{WVestimate} gives
		\[
			\frac{k^h|a_k\tau_k(r;0)|}{|a_N \tau_N(r;0)|} \le \left(1+\frac{1}{3N^{\gamma-3}}\right)e^{-\frac{1}{2}p^2 b(N)+h\ln(N-p)}.
		\]
		Since $\displaystyle\lim_{r\to\infty}N=+\infty$ and since $p\ge\eta N$, we have
		\begin{align*}
			-\frac{1}{2}p^2 b(N)+h\ln(N-p) &\le -\frac{1}{2}\eta\frac{p}{\ln N(\ln\ln N)^{1+\delta}} + h\ln p + h\ln\left(\frac{1}{\eta}-1\right) \\
			&\le -\sqrt p
		\end{align*}
		for every sufficiently large $r$.\ \ Therefore similar to the last paragraph we also have
		\[
			\frac{1}{|a_N \tau_N(r;0)|}\sum_{k:k\le(1-\eta)N}{k^h|a_k\tau_k(r;0)|} = O(e^{-\frac{1}{2}\sqrt{\eta N}})
		\]
		as $r\to\infty$ and $r\in(0,\infty)\setminus E$.
	\item In the remaining case, we let $\varepsilon\in(0,\frac{1}{3N^{\gamma-3}})$ be arbitrary.\ \ Then by the continuity of the function $b$, the number $\eta\in(0,\frac{1}{2}]$ can be chosen small enough so that
		\[
			(1-\eta)^{-|h|}<1+\varepsilon
		\]
		and
		\[
			\frac{b(N+|p|)}{b(N)}>1-\varepsilon \hspace{20px} \mbox{for every $p\in[-\eta N,\eta N]$}.
		\]
		Now for $k\in((1-\eta)N, (1+\eta)N)$ and $r\notin E$, let $p:=k-N$.\ \ Both estimates in Lemma~\ref{WVestimate} give
		\begin{align*}
			\frac{k^h|a_k\tau_k(r;0)|}{|a_N \tau_N(r;0)|} &\le N^h\left(1+\frac{p}{N}\right)^h\left(1+\frac{1}{3N^{\gamma-3}}\right)e^{-\frac{1}{2}p^2b(N+|p|)} \\
			&\le N^h(1+\varepsilon)\left(1+\frac{1}{3N^{\gamma-3}}\right)e^{-\frac{1}{2}p^2(1-\varepsilon)b(N)} \\
			&\le \left(1+\frac{1}{3N^{\gamma-3}}\right)^2 N^h e^{-b^*p^2},
		\end{align*}
		where $b^*:=\frac{1}{2}(1-\varepsilon)b(N)$.
\end{enumerate}
Combining the above three paragraphs, we see that for every $\varepsilon\in(0,\frac{1}{3N^{\gamma-3}})$ and every $\kappa\in\mathbb{N}$, there exists $\eta\in(0,\frac{1}{2}]$ such that
\[
	\sum_{k:|k-N|\ge \kappa}{k^h|a_k\tau_k(r;0)|} \le 2\left(1+\frac{1}{3N^{\gamma-3}}\right)^2 N^h \mu_\W(r;f)\left[\sum_{p=\nu}^\infty{e^{-b^*p^2}}+O(e^{-\frac{1}{2}\sqrt{\eta N}})\right]
\]
as $r\to\infty$ and $r\in(0,\infty)\setminus E$.\ \ Note that
\[
	\sum_{p=\kappa}^\infty{e^{-b^*p^2}} \le \int_{\kappa-1}^\infty{e^{-b^*t^2}\,dt} = \frac{1}{\sqrt{b^*}}\left(\frac{e^{-y_0^2}}{2y_0}-\int_{y_0}^\infty{\frac{e^{-y^2}}{2y^2}\,dy}\right)
\]
where $y_0:=(\kappa-1)\sqrt{b^*}$.\ \ So given any $\beta>0$, if we take $\kappa=\left[\sqrt{\frac{\beta}{b(N)}\ln\frac{1}{b(N)}}\right]$, then for every $\omega\in(0,\beta)$, the number $\varepsilon$ can be chosen so small that
\[
	\sum_{p=\kappa}^\infty{e^{-b^*p^2}} = O\left(\frac{e^{-y_0^2}}{y_0\sqrt{b^*}}\right) = O\left(\frac{e^{\frac{1}{2}(1-\varepsilon)\beta\ln b(N)}}{\sqrt{b(N)\ln\frac{1}{b(N)}}}\right) = o(b(N)^{\frac{\omega-1}{2}})
\]
as $r\to\infty$ and $r\in(0,\infty)\setminus E$.
\hfill \qed

\begin{lemma}
\label{polyn}
Let $r>\frac{1}{4}$ and $p$ be a polynomial of degree $d$.\ \ Then for every $R\ge r$ and every $x\in\overline{D(0;R)}$, we have
\begin{align}
	\label{W14}|(\A_\W p)(x)| \le \frac{(\sqrt{R}+\frac{1}{2})^{2d}}{r^d}M(r;p)
\end{align}
and
\begin{align}
	\label{W15}|(\D_\W p)(x)| \le \frac{2ed(\sqrt{R}+\frac{1}{2})^{2(d-1)}}{r^d}M(r;p).
\end{align}
\end{lemma}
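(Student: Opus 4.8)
The plan is to reduce both inequalities to the elementary monotonicity estimate
\[
	M(\rho;p)\le\left(\frac{\rho}{r}\right)^{d}M(r;p)\qquad(\rho\ge r>0)
\]
for polynomials $p$ of degree at most $d$, which follows by applying the maximum modulus principle to the reciprocal polynomial $q(y):=y^{d}p(1/y)$: it is entire of degree at most $d$ and satisfies $M(t;q)=t^{d}M(1/t;p)$ for $t>0$, so $M(1/\rho;q)\le M(1/r;q)$ rearranges to the displayed bound. For \eqref{W15} I will also use that, for a polynomial, $\D_\W p$ is a divided difference.

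First I would locate the images $x^{\pm}$: if $x\in\overline{D(0;R)}$ then $|\sqrt{x}|\le\sqrt{R}$, so
\[
	|x^{\pm}|=\left|\sqrt{x}\pm\frac{i}{2}\right|^{2}\le\left(\sqrt{R}+\frac{1}{2}\right)^{2}=:A,\qquad A=R+\sqrt{R}+\tfrac14\ge R\ge r.
\]
Then \eqref{W14} is immediate:
\[
	|(\A_\W p)(x)|\le\tfrac12\bigl(|p(x^{+})|+|p(x^{-})|\bigr)\le M(A;p)\le\Bigl(\tfrac{A}{r}\Bigr)^{d}M(r;p).
\]

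For \eqref{W15} the key identity is that, for a polynomial $p$,
\[
	(\D_\W p)(x)=\frac{p(x^{+})-p(x^{-})}{x^{+}-x^{-}}=\int_{0}^{1}p'\bigl((1-t)x^{-}+tx^{+}\bigr)\,dt
\]
(also valid at $x=0$, both sides being $p'(-\tfrac14)$). Since $\overline{D(0;A)}$ is convex and contains $x^{\pm}$, the segment $[x^{-},x^{+}]$ lies in it, whence $|(\D_\W p)(x)|\le M(A;p')$; so it remains to bound $M(A;p')$ by $M(r;p)$. The quickest way is Bernstein's inequality $M(A;p')\le\tfrac{d}{A}M(A;p)$ followed by the monotonicity estimate, giving at once $|(\D_\W p)(x)|\le\tfrac{d}{r^{d}}A^{d-1}M(r;p)\le\tfrac{2ed}{r^{d}}\bigl(\sqrt{R}+\tfrac12\bigr)^{2(d-1)}M(r;p)$. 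To stay Bernstein-free one may instead bound $p'$ on $|w|=A$ by a Cauchy integral over $|\zeta-w|=\varrho$, obtaining $|p'(w)|\le\varrho^{-1}(A+\varrho)^{d}r^{-d}M(r;p)$, and optimise ($\varrho=A/(d-1)$ for $d\ge2$; the case $d=1$ is trivial as $p'$ is constant); this gives $M(A;p')\le\tfrac{d^{d}}{(d-1)^{d-1}}A^{d-1}r^{-d}M(r;p)$ with $\tfrac{d^{d}}{(d-1)^{d-1}}=d\bigl(1+\tfrac1{d-1}\bigr)^{d-1}\le ed$, which again settles \eqref{W15}.

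None of these steps is genuinely hard; the only point needing care is matching the constant in \eqref{W15}, i.e. converting a derivative bound back into one for $M(r;p)$. The factor $2ed$ in the statement (rather than the $ed$ or $d$ found above) leaves exactly the room one needs if one prefers to argue through the even polynomial $P(z):=p(z^{2})$ of degree $2d$, via $(\D_\W p)(z^{2})=\dfrac{P(z+\frac{i}{2})-P(z-\frac{i}{2})}{2iz}$ and the maximum modulus principle on $|z|=\sqrt{R}$: there the hypothesis $r>\tfrac14$ gives $\sqrt{R}\ge\sqrt{r}>\tfrac12$, hence $\sqrt{R}+\tfrac12<2\sqrt{R}$, which lets one replace $\tfrac1{\sqrt{R}}$ by $\tfrac2{\sqrt{R}+1/2}$ and so absorb the extra factor $2$.
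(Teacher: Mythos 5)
Your proposal is correct, and for \eqref{W14} it coincides with the paper's argument (maximum principle applied to $p(x)/x^{d}$ on the exterior of $D(0;r)$, then evaluation of $p$ at $x^{\pm}\in\overline{D(0;(\sqrt{R}+\frac12)^{2})}$). For \eqref{W15} you take a mildly but genuinely different route. The paper represents the Wilson difference by integrating $p'$ along the \emph{parabolic} path $t\mapsto(\sqrt{x}+it)^{2}$, $t\in[-\frac12,\frac12]$, which produces the Jacobian factor $\bigl|\tfrac{\sqrt{x}+it}{\sqrt{x}}\bigr|$; the hypothesis $r>\frac14$ is used precisely to bound this factor by $2$, and the pointwise derivative bound $|p'|\le ed(\sqrt{R}+\frac12)^{2(d-1)}r^{-d}M(r;p)$ is quoted from Hayman (your Cauchy-integral optimisation with $\varrho=A/(d-1)$ reconstructs exactly that lemma). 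You instead use the exact divided-difference identity along the straight chord $[x^{-},x^{+}]$, which carries no Jacobian factor, and then Bernstein's inequality on $|z|=A$; this yields the constant $d$ in place of $2ed$, needs $r>\frac14$ nowhere in the main line, and covers all $x\in\overline{D(0;R)}$ at once without the paper's final appeal to the maximum principle for the polynomials $\A_\W p$ and $\D_\W p$. Your closing observation — that the factor $2$ in the stated bound is exactly what the inequality $\sqrt{R}+\frac12<2\sqrt{R}$ (from $r>\frac14$) buys in a path-based argument — correctly identifies the mechanism the paper actually uses. Both arguments are sound; yours is slightly sharper, the paper's is self-contained modulo the cited Hayman lemma.
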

\begin{proof}
Applying maximum principle to $\frac{p(x)}{x^d}$ on $\hat{\mathbb{C}}\setminus D(0;r)$, we have
\begin{align}
\label{Maxprin}
	|p(z)| \le \frac{|z|^d}{r^d}M(r;p)
\end{align}
whenever $|z|>r$.\ \ Now let $R\ge r$.
\begin{enumerate}[(i)]
	\item Applying maximum principle to $p$ on $D(0;(\sqrt{R}+\frac{1}{2})^2)$, \eqref{Maxprin} gives
	\[
		|p(x)|\le M\left(\left(\sqrt{R}+\frac{1}{2}\right)^2;p\right)\le\frac{(\sqrt{R}+\frac{1}{2})^{2d}}{r^d}M(r;p)
	\]
	for every $x\in\overline{D(0;(\sqrt{R}+\frac{1}{2})^2)}$, and so \eqref{W14} follows for every $x\in\partial D(0;R)$.
	\item Applying Cauchy's inequality to \eqref{Maxprin} we have
	\[
		|p'(x)|\le \frac{ed(\sqrt{R}+\frac{1}{2})^{2(d-1)}}{r^d}M(r;p)
	\]
	for every $x\in\overline{D(0;(\sqrt{R}+\frac{1}{2})^2)}$ \cite[Lemma 7, p. 337]{Hayman2}.\ \ Hence for every $x\in\partial D(0;R)$,
	\begin{align*}
		|(\D_\W p)(x)| &= \left|\frac{p(x^+)-p(x^-)}{2i\sqrt{x}}\right| \\
		&\le \int_{-\frac{1}{2}}^{\frac{1}{2}}{\left|p'((\sqrt{x}+it)^2)\frac{2i(\sqrt{x}+it)}{2i\sqrt {x}}\right|\,dt} \\
		&\le \frac{2ed(\sqrt{R}+\frac{1}{2})^{2(d-1)}}{r^d}M(r;p)
	\end{align*}
	which is \eqref{W15}.
\end{enumerate}
Since $\D_\W p$ and $\A_\W p$ are polynomials, both \eqref{W14} and \eqref{W15} still hold for every $x\in\overline{D(0;R)}$ by the maximum principle.
\end{proof}

With Theorem~\ref{tail} and Lemma~\ref{polyn}, we can now prove Theorem~\ref{WVmain}, which is an estimate on the behavior of successive Wilson differences of a transcendental entire function of order smaller than $\frac13$.

\noindent \textit{Proof of Theorem~\ref{WVmain}.}\ \ At each $x\in\mathbb{C}$, we let $b(N):=\frac{1}{N\ln N (\ln\ln N)^{1+\delta}}$ and $\kappa:=\left[\sqrt{\frac{10}{b(N)}\ln\frac{1}{b(N)}}\right]$, and let
\[
	\phi(x):=\sum_{k:|k-N|>\kappa}{a_k\tau_k(x;0)} \hspace{20px} \mbox{and} \hspace{20px} p(x):=\sum_{k=N-\kappa}^{N+\kappa}{a_k\frac{\tau_k(x;0)}{\tau_{N-\kappa}(x;0)}}.
\]
Then locally $p$ is a polynomial of degree at most $2\kappa$ and
\[
	f(x)=\phi(x) + \tau_{N-\kappa}(x;0)p(x).
\]
We take $E$ to be the $\tau$-exceptional set for $f$.
\begin{enumerate}[(i)]
\item Applying Theorem~\ref{tail} with $h=n$, $\beta=10$ and $\omega=9$, we have
\begin{align*}
	r^n|(\D_\W^n\phi)(x)| &= r^n\left|\sum_{k:|k-N|>\kappa}{a_k(-1)^n k(k-1)\cdots(k-n+1)\tau_{k-n}(x;0^{+(n)})}\right| \\
	&\le \sum_{k:|k-N|\ge\kappa}{k^n|a_k\tau_k(r;0)|\frac{r^n}{r(r+1^2)\cdots(r+(n-1)^2)}} \\
	&\le \sum_{k:|k-N|\ge\kappa}{k^n|a_k\tau_k(r;0)|} \\
	&= o(\mu_\W(r;f)N^n b(N)^4) = o(\mu_\W(r;f)N^{n-4})
\end{align*}
as $r\to\infty$ and $r\in(0,\infty)\setminus E$.\ \ In particular, we have
\begin{align}
	\label{W16}|\phi(x)| = o(\mu_\W(r;f)N^{-4}) = o\left(\frac{\kappa}{N}\right)M(r;f)
\end{align}
as $r\to\infty$ and $r\in(0,\infty)\setminus E$.
\item On the other hand, since $\sum_{k=1}^{N-\kappa-1}\frac{k^2}{r}<\frac{(N-\kappa)(N-\kappa-1)(2N-2\kappa-1)}{6N^\gamma}<1$, we have
\begin{align*}
	|\tau_{N-\kappa}(x;0)| &\ge r(r-1^2)\cdots(r-(N-\kappa-1)^2) \\
	&= r^{N-\kappa}\left(1-\frac{1^2}{r}\right)\cdots\left(1-\frac{(N-\kappa-1)^2}{r}\right) \\
	&\ge r^{N-\kappa}\left(1-2\sum_{k=1}^{N-\kappa-1}\frac{k^2}{r}\right) \\
	&\ge r^{N-\kappa}\left(1-\frac{(N-\kappa)(N-\kappa-1)(2N-2\kappa-1)}{3N^\gamma}\right) \\
	&= r^{N-\kappa}(1-\varepsilon)
\end{align*}
where $\varepsilon\to 0$ as $r\to\infty$.\ \ This together with \eqref{W16} gives
\[
	|p(x)|=\frac{1}{|\tau_{N-\kappa}(x;0)|}|f(x)-\phi(x)| \le r^{\kappa-N}(1+\varepsilon')M(r;f),
\]
where $\varepsilon'\to 0$ as $r\to\infty$ and $r\in(0,\infty)\setminus E$.\ \ Setting $M_0:=r^{\kappa-N}(1+\varepsilon')M(r;f)$ and applying \eqref{W15} in Lemma~\ref{polyn}, we have
\[
	|(\D_\W p)(x)| \le \frac{4e\kappa(\sqrt{r}+\frac12)^{4\kappa-2}}{r^{2\kappa}}M_0 = O\left(\frac{\kappa}{r}\right)M_0
\]
as $r\to\infty$ and $r\in(0,\infty)\setminus E$, and inductively for each $j\in\mathbb{N}_0$ we have
\[
	|(\D_\W^j p)(x)| = O\left(\frac{\kappa^j}{r^j}\right)M_0
\]
as $r\to\infty$ and $r\in(0,\infty)\setminus E$.\ \ We also note that for each $j\in\{0,1,\ldots,n\}$,
\begin{align*}
	\D_\W^{n-j}\tau_{N-\kappa}(x;0) &= (-1)^{n-j}\frac{(N-\kappa)!}{(N-\kappa-n+j)!}\tau_{N-\kappa-n+j}(x;0^{+(n-j)}) \\
	&= (-1)^n\frac{(N-\kappa)!}{(N-\kappa-n)!}\tau_{N-\kappa-n}(x;0^{+(n)}) \\
	&\ \ \ \,\cdot\frac{(N-\kappa-n)!}{(N-\kappa-n+j)!}(-1)^j\frac{\tau_{N-\kappa-n+j}(x;0^{+(n-j)})}{\tau_{N-\kappa-n}(x;0^{+(n)})} \\
	&= (-1)^n\frac{(N-\kappa)!}{(N-\kappa-n)!}\tau_{N-\kappa-n}(x;0^{+(n)})O\left(\frac{(r+N^2)^j}{N^j}\right) \\
	&= (-1)^n\frac{(N-\kappa)!}{(N-\kappa-n)!}\tau_{N-\kappa-n}(x;0^{+(n)})O\left(\frac{r^j}{N^j}\right)
\end{align*}
as $r\to\infty$, where the last step followed from Lemma~\ref{muorder} (ii).\ \ These together with Theorem~\ref{WLeibniz}, \eqref{W14} in Lemma~\ref{polyn} and \eqref{W16} yield
{\footnotesize
\begin{align*}
	&\ \ \ \,\D_\W^n(\tau_{N-\kappa}(x;0)p(x)) \\
	&= \sum_{k=0}^n{C(n,k)\sum_{j=0}^{n-k}{\binom{n-k}{j}\A_\W^{n-k-j}\D_\W^{j+k}p(x)\A_\W^j\D_\W^{n-j}\tau_{N-\kappa}(x;0)}} \\
	&= \A_\W^n p(x)\D_\W^n\tau_{N-\kappa}(x;0) + \sum_{j=1}^n{\binom{n}{j}\A_\W^{n-j}\D_\W^j p(x)\A_\W^j\D_\W^{n-j}\tau_{N-\kappa}(x;0)} \\
	&\ \ \ \,+ \sum_{k=1}^n{C(n,k)\sum_{j=0}^{n-k}{\binom{n-k}{j}\A_\W^{n-k-j}\D_\W^{j+k}p(x)\A_\W^j\D_\W^{n-j}\tau_{N-\kappa}(x;0)}} \\
	&= (-1)^n\frac{(N-\kappa)!}{(N-\kappa-n)!}\tau_{N-\kappa-n}(x;0^{+(n)})\left(\A_\W^n p(x)+O\left(\frac{\kappa}{N}\right)M_0\right) \\
	&= (-1)^n\frac{(N-\kappa)!}{(N-\kappa-n)!}\tau_{N-\kappa-n}(x;0^{+(n)})\left(p(x)+O\left(\frac{\kappa}{N}\right)M_0\right) \\
	&= (-1)^n\frac{(N-\kappa)!}{(N-\kappa-n)!}\frac{\tau_{N-\kappa-n}(x;0^{+(n)})}{\tau_{N-\kappa}(x;0)}\left(f(x)-\phi(x)+\tau_{N-\kappa}(x;0)O\left(\frac{\kappa}{N}\right)M_0\right) \\
	&= (-1)^n\frac{(N-\kappa)!}{(N-\kappa-n)!}\frac{\tau_{N-\kappa-n}(x;0^{+(n)})}{\tau_{N-\kappa}(x;0)}\left(f(x)+O\left(\frac{\kappa}{N}\right)M(r;f)\right)
\end{align*}}%
as $r\to\infty$ and $r\in(0,\infty)\setminus E$.
\end{enumerate}
The above paragraphs imply that
\begin{align*}
	\left(\frac{x}{N}\right)^n(\D_\W^n f)(x) &= \left(\frac{x}{N}\right)^n(\D_\W^n\phi)(x) + \left(\frac{x}{N}\right)^n\D_\W^n(\tau_{N-\kappa}(x;0)p(x)) \\
	&= f(x)+O\left(\frac{\kappa}{N}\right)M(r;f)
\end{align*}
as $r\to\infty$ and $r\in(0,\infty)\setminus E$.\ \ In this final conclusion we may replace $\kappa$ by $\kappa=\left[\sqrt{N(\ln N)^2(\ln\ln N)^{1+\delta}}\right]$. \hfill \qed

\section{Applications}
\label{sec:Applications}
Our Wilson version of the Wiman-Valiron theory can be applied when studying difference equations involving the Wilson operator.\ \ One can refer to Z. Chen's book \cite{Chen} for a comprehensive study on basics of complex difference equations.\ \ An \textit{(ordinary) Wilson difference equation} is an equation involving an unknown complex function and its Wilson differences, i.e. an equation of the form
\[
	F(x, y, \D_\W y, \D_\W^2 y, \D_\W^3 y, \ldots) = 0,
\]
in which $y$ is an unknown function of the complex variable $x$.\ \ It is said to be \textit{linear} if $F$ is linear in $y$ and its Wilson differences, i.e. if it is of the form
\[
	a_n\D_\W^n y + \cdots + a_1\D_\W y + a_0 y = 0,
\]
where $a_0,\ldots,a_n$ are given functions of $x$.

\begin{example}
\textup{(Eigenvectors of $\D_\W$)}
It can be readily verified that the simplest linear first-order Wilson difference equation
\[
	\D_\W y = y
\]
has two linearly independent entire solutions given by
\begin{align}
\label{W19}
\begin{aligned}
	f_1(x) &= I_{2i\sqrt x}(2) + I_{-2i\sqrt x}(2) \hspace{20px} \mbox{and} \\
	f_2(x) &= K_{2i\sqrt x}(-2) + K_{-2i\sqrt x}(-2),
\end{aligned}
\end{align}
where $I_\alpha$ and $K_\alpha$ are respectively the modified Bessel function of the first and the second kind with order $\alpha\in\mathbb{C}$, which are defined by
\[
	I_\alpha(z):=\sum_{k=0}^\infty{\frac{1}{k!\,\Gamma(k+\alpha+1)}\left(\frac{z}{2}\right)^{2k+\alpha}}
\]
and
\[
	K_\alpha(z):=\frac{\pi}{2\sin\alpha\pi}\left[I_{-\alpha}(z)-I_{\alpha}(z)\right].
\]
$f_1$ and $f_2$ in \eqref{W19} are eigenfunctions of $\D_\W$ corresponding to the eigenvalue $1$, so they can be regarded as \textit{Wilson analogues of the exponential function}.\ \ In fact, for every $\lambda\in\mathbb{C}\setminus\{0\}$, the functions
\begin{align*}
	f_1(x) &= I_{2i\sqrt x}\left(\frac{2}{\lambda}\right) + I_{-2i\sqrt x}\left(\frac{2}{\lambda}\right) \hspace{20px} \mbox{and} \\
	f_2(x) &= K_{2i\sqrt x}\left(-\frac{2}{\lambda}\right) + K_{-2i\sqrt x}\left(-\frac{2}{\lambda}\right)
\end{align*}
are linearly independent entire solutions to the Wilson difference equation
\[
	\D_\W y = \lambda y.
\]
\end{example}

Theorem~\ref{WVmain} can be used to obtain the following result about linear Wilson difference equations.\ \ The classical analogue of this result about linear differential equations can be found in \cite[\S 4.5]{Valiron3}.
\begin{theorem}
\label{WVDE}
	Let $f$ be a transcendental entire solution of order $\sigma<\frac{1}{3}$ to the Wilson difference equation
	\begin{align}
		\label{WW} a_n\D_\W^n y + \cdots + a_1\D_\W y + a_0 y = 0,
	\end{align}
	where $a_0,\ldots,a_n$ are polynomials and $a_n\not\equiv 0$.\ \ Then the following statements hold:
\begin{enumerate}[(i)]
	\item $\sigma$ is the slope of some edge of the \textbf{Newton polygon} for the Wilson difference equation \eqref{WW}, i.e. the convex hull of
	\[
		\bigcup_{k=0}^n\left\{(x,y)\in\mathbb{R}^2: \mbox{$x\ge k$ and $y\le(\deg a_{n-k})-(n-k)$}\right\}.
	\]
	In particular, we have $\sigma\in(0,\frac13)\cap\mathbb{Q}$.
	\item There exists $L>0$ such that
	\[
		\ln M(r;f) = Lr^{\sigma}(1+o(1))
	\]
	as $r\to\infty$, i.e. $f$ is of finite type.
\end{enumerate}
\end{theorem}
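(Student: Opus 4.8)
The plan is to adapt the classical proof for linear differential equations (cf.\ \cite[\S 4.5]{Valiron3}), using Theorem~\ref{WVmain} in place of the Wiman--Valiron relation for $f'$. For $0\le k\le n$ with $a_k\not\equiv 0$ write $d_k:=\deg a_k$, let $c_k$ be the leading coefficient of $a_k$, and put $s_k:=d_k-k$. Let $E$ be the exceptional set of Theorem~\ref{WVmain}, and for large $r\notin E$ choose $x_r$ with $|x_r|=r$ and $|f(x_r)|=M(r;f)$. Since $\kappa/N\to 0$ for the $\kappa$ of Theorem~\ref{WVmain}, that theorem gives $(\D_\W^k f)(x_r)=(N/x_r)^k f(x_r)(1+o(1))$ for each fixed $k$, where $N:=\nu_\W(r;f)$. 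Substituting into \eqref{WW}, dividing by $f(x_r)\ne 0$, and using $a_k(x_r)=c_k\,x_r^{d_k}(1+o(1))$ uniformly on $|x|=r$, I obtain
\begin{equation}\label{WVDEbal}
	\sum_{k:\,a_k\not\equiv 0} c_k\,N^{k}\,x_r^{\,s_k}\,(1+o(1))=0
\end{equation}
as $r\to\infty$, $r\notin E$. (In particular \eqref{WVDEbal} is impossible if only one $a_k$ is nonzero, so at least two of the $a_k$ do not vanish identically.)

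For (i): since the left side of \eqref{WVDEbal} vanishes while every bracket is $1+o(1)$, the two largest terms in modulus must agree up to a factor bounded in terms of $n$; thus for each large $r\notin E$ there are $k_1=k_1(r)\ne k_2=k_2(r)$ with $|c_{k_1}|N^{k_1}r^{s_{k_1}}\asymp|c_{k_2}|N^{k_2}r^{s_{k_2}}\ge|c_k|N^k r^{s_k}$ for all $k$. The comparability gives $\ln N=\tfrac{s_{k_1}-s_{k_2}}{k_2-k_1}\ln r+O(1)$, and the dominance says that, for $r$ large, the points $(n-k_1,s_{k_1})$ and $(n-k_2,s_{k_2})$ both maximise $y-\tfrac{s_{k_1}-s_{k_2}}{k_2-k_1}\,x$ over the corner points $(n-k,s_k)$ of the Newton polygon, i.e.\ the segment joining them is an edge. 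As $(k_1,k_2)$ ranges over a finite set, $\tfrac{\ln\nu_\W(r;f)}{\ln r}$ stays within $o(1)$ of the finitely many edge slopes; comparing with $\limsup_{r\to\infty}\tfrac{\ln\nu_\W(r;f)}{\ln r}=\sigma$ from Theorem~\ref{muorderequal} forces $\sigma$ to be one of these slopes, hence $\sigma\in\mathbb{Q}$. Finally $\sigma>0$, since an edge slope $\le 0$ would make $\nu_\W(r;f)=O(1)$ along a sequence tending to $\infty$, contradicting $\nu_\W(r;f)\to\infty$ (Lemma~\ref{munuprop}).

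For (ii): there is a unique edge $\mathcal{E}$ of slope $\sigma$, on which $s_k+\sigma k$ is constant and strictly exceeds its value off $\mathcal{E}$, so \eqref{WVDEbal} reduces to $\sum_{k\in\mathcal{E}}c_k\,e^{i s_k\theta_r}(N r^{-\sigma})^{k}(1+o(1))=0$ with $\theta_r=\arg x_r$. This forces $v(r):=\nu_\W(r;f)\,r^{-\sigma}$ to be bounded above and below, and to lie within $o(1)$ of the finite set of moduli of the roots of a fixed polynomial with coefficients $\{c_k\}_{k\in\mathcal{E}}$; since $\nu_\W$ is nondecreasing with small jumps, $v(r)$ has only finitely many limit values and cannot oscillate between them, so $v(r)\to c$ for some $c>0$ (and the bound $\nu_\W(r;f)\asymp r^\sigma$ together with monotonicity disposes of $r\in E$ as well). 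Given $\nu_\W(r;f)=c\,r^\sigma(1+o(1))$ and $\sigma<\tfrac13$ (so $\nu_\W(t;f)^3=o(t)$ by Lemma~\ref{muorder}(ii)), one has $\tfrac{d}{d\ln t}\ln\mu_\W(t;f)=\nu_\W(t;f)(1+o(1))$, hence $\ln\mu_\W(r;f)=\int^{r}\nu_\W(t;f)\,\tfrac{dt}{t}\,(1+o(1))=\tfrac{c}{\sigma}r^\sigma(1+o(1))$. Finally $\ln M(r;f)=\ln\mu_\W(r;f)(1+o(1))$ for $r\notin E$ by the squeeze $\mu_\W\le K M\le\mu_\W[\ln^+\mu_\W]^{1/2+\varepsilon}$ established in the proof of Theorem~\ref{muorderequal}, and since $M(\cdot;f)$ is increasing this passes across the gaps of $E$ (finite logarithmic measure). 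Taking $L:=c/\sigma>0$ yields $\ln M(r;f)=Lr^{\sigma}(1+o(1))$.

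The main obstacle is the convergence $\nu_\W(r;f)\,r^{-\sigma}\to c$ needed in (ii): upgrading the comparability coming from \eqref{WVDEbal} to a genuine asymptotic requires identifying the lattice points on the critical edge $\mathcal{E}$ and ruling out oscillation of $\nu_\W(r;f)r^{-\sigma}$ among several admissible limit values, which in turn rests on controlling the jumps of $\nu_\W$ and on carefully tracking the $(1+o(1))$ factors in \eqref{WVDEbal}, since these carry phases $e^{i s_k\theta_r}$ depending on the location of the maximum point. Part (i) and the reduction \eqref{WVDEbal} are, by comparison, routine consequences of Theorem~\ref{WVmain}.
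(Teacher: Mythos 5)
Your proposal follows essentially the same route as the paper's own proof: apply Theorem~\ref{WVmain} at points where $|f|$ attains $M(r;f)$, divide by $f$ to obtain the balance relation, read off an edge of the Newton polygon for (i), upgrade to $\nu_\W(r;f)=c\,r^{\sigma}(1+o(1))$, integrate $\nu_\W(t;f)\,dt/t$ to recover $\ln\mu_\W(r;f)$, and transfer to $\ln M(r;f)$ via the inequalities \eqref{Mbound} together with the finiteness of $\logmea E$. The only real difference is one of emphasis: the paper asserts $N=Lr^{\chi}(1+o(1))$ directly from the balance relation, whereas you correctly isolate and sketch the non-oscillation argument (using the continuous downward drift of $\nu_\W(r;f)r^{-\sigma}$ between jumps against the finite logarithmic measure of $E$) needed to justify that step.
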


\begin{proof}
Given a solution $f$ of \eqref{WW}, we let
\[
	S:=\{x\in\mathbb{C}:|f(x)|=M(|x|;f)\}.
\]
Then $S$ has non-empty intersection with $\partial D(0;r)$ for every $r>0$.\ \ Substituting $f$ into \eqref{WW} and applying Theorem~\ref{WVmain} to $f$, we have
\[
	\left(a_n\frac{N^n}{x^n} + \cdots + a_1\frac{N}{x} + a_0\right)f(x)(1+o(1)) = 0
\]
uniformly on $S$ as $r=|x|\to\infty$ and $r\in[0,+\infty)\setminus E$, where $N=\nu_\W(r;f)$ and $E$ is the $\tau$-exceptional set for $f$.\ \ So denoting $c_k$ as the leading coefficient of the polynomial $a_k$ for each $k$, we have
\[
	\sum_{k=0}^n{c_k N^k x^{(\deg a_k) - k}{(1+o(1))}}=0
\]
uniformly on $S$ as $r\to\infty$ and $r\in(0,+\infty)\setminus E$.\ \ This implies that
\begin{align}
	\label{W40}N=Lr^\chi(1+o(1))
\end{align}
as $r\to\infty$ and $r\in(0,\infty)\setminus E$ for some $L>0$ and some positive rational number $\chi$ which is the slope of some edge of the Newton polygon for \eqref{WW}.\ \ We have
\[
	\chi=\limsup_{r\to\infty}{\frac{\ln^+\nu_\W(r;f)}{\ln r}}=\sigma_{\mu_\W(r;f)}=\sigma
\]
by Lemma~\ref{muorder} (i) and by \eqref{Mbound}, so statement (i) follows.

\bigskip
Next, since
\begin{align*}
	\ln\mu_\W(r;f) &= \ln|a_N|+\ln r +\ln(r+1^2) +\cdots + \ln(r+(N-1)^2) \\
	&= \ln|a_N| + N\ln r + \sum_{k=1}^{N-1}{\ln\left(1+\frac{k^2}{r}\right)},
\end{align*}
if we let $\{r_j\}_{j\in\mathbb{N}}$ be the monotonic increasing sequence of positive real numbers such that $\nu_\W(r;f)=j$ for all $r\in[r_j,r_{j+1})$, then
\[
	\frac{d}{dr}\ln\mu_\W(r;f) = \frac{j}{r} + \frac{d}{dr}\sum_{k=1}^{j-1}{\ln\left(1+\frac{k^2}{r}\right)}
\]
for all $r\in(r_j,r_{j+1})$, and so
\[
	\ln\mu_\W(r;f)-\ln\mu_\W(r_j;f)=\int_{r_j}^r{\frac{j}{t}\,dt} + \sum_{k=1}^{j-1}{\ln\left(1+\frac{k^2}{r}\right)} - \sum_{k=1}^{j-1}{\ln\left(1+\frac{k^2}{r_j}\right)}
\]
for all $r\in[r_j,r_{j+1}]$.\ \ Now for all $r\in[r_{j+1},r_{j+2}]$ we have
\begin{align}
\label{W18}
\begin{aligned}
	&\ \ \ \,\ln\mu_\W(r;f)\\
	&=\ln\mu_\W(r_{j+1};f)+\int_{r_{j+1}}^r{\frac{j+1}{t}\,dt} + \sum_{k=1}^{j}{\ln\left(1+\frac{k^2}{r}\right)} - \sum_{k=1}^{j}{\ln\left(1+\frac{k^2}{r_{j+1}}\right)} \\
	&= \left[\ln\mu_\W(r_j;f)+\int_{r_j}^{r_{j+1}}{\frac{j}{t}\,dt} + \sum_{k=1}^{j-1}{\ln\left(1+\frac{k^2}{r_{j+1}}\right)} - \sum_{k=1}^{j-1}{\ln\left(1+\frac{k^2}{r_j}\right)}\right] \\
	&\ \ \ \,+\int_{r_{j+1}}^r{\frac{j+1}{t}\,dt} + \sum_{k=1}^{j}{\ln\left(1+\frac{k^2}{r}\right)} - \sum_{k=1}^{j}{\ln\left(1+\frac{k^2}{r_{j+1}}\right)} \\
	&= \ln\mu_\W(r_j;f)+\int_{r_j}^{r}{\frac{\nu_\W(t;f)}{t}\,dt} + \sum_{k=1}^{j}{\ln\left(1+\frac{k^2}{r}\right)} - \sum_{k=1}^{j-1}{\ln\left(1+\frac{k^2}{r_j}\right)} \\
	&\ \ \ \,- \ln\left(1+\frac{j^2}{r_{j+1}}\right) \\
	&= \ln\mu_\W(r_{j-1};f)+\int_{r_{j-1}}^{r}{\frac{\nu_\W(t;f)}{t}\,dt} + \sum_{k=1}^{j}{\ln\left(1+\frac{k^2}{r}\right)} - \sum_{k=1}^{j-2}{\ln\left(1+\frac{k^2}{r_{j-1}}\right)} \\
	&\ \ \ \,- \ln\left(1+\frac{(j-1)^2}{r_j}\right) - \ln\left(1+\frac{j^2}{r_{j+1}}\right) \\
	&= \cdots \\
	&= \ln\mu_\W(r_1;f)+\int_{r_1}^{r}{\frac{\nu_\W(t;f)}{t}\,dt} + \sum_{k=1}^{j}{\ln\left(1+\frac{k^2}{r}\right)} - \sum_{k=1}^{j}{\ln\left(1+\frac{k^2}{r_{k+1}}\right)}.
\end{aligned}
\end{align}
By choosing any $\gamma\in(3,\frac{1}{\sigma})$, we see that for all $r\in[r_{j+1},r_{j+2}]$, the first sum on the right-hand side of \eqref{W18} satisfies
\[
	\sum_{k=1}^{j}{\ln\left(1+\frac{k^2}{r}\right)} \le \sum_{k=1}^{j}{\ln\left(1+\frac{k^2}{(j+1)^\gamma}\right)} \le \sum_{k=1}^{j}{\frac{k^2}{(j+1)^\gamma}} \le \frac{1}{3(j+1)^{\gamma-3}},
\]
and the second sum on the right-hand side of \eqref{W18} satisfies
\[
	\sum_{k=1}^{j}{\ln\left(1+\frac{k^2}{r_{k+1}}\right)} \le \sum_{k=1}^{j}{\ln\left(1+\frac{k^2}{(k+1)^\gamma}\right)} \le \sum_{k=1}^{j}{\frac{1}{(k+1)^{\gamma-2}}}\le \ln (j+1).
\]
So we have
\[
	\sum_{k=1}^{j}{\ln\left(1+\frac{k^2}{r}\right)}=O(N^{3-\gamma}) \hspace{15px}\mbox{and}\hspace{15px} \sum_{k=1}^{j}{\ln\left(1+\frac{k^2}{r_{k+1}}\right)}=o(r^\chi)
\]
as $r\to\infty$ and $r\in(0,\infty)\setminus E$ by \eqref{W40}.
Therefore as $r\to\infty$ and $r\in(0,\infty)\setminus E$, \eqref{W18} becomes
\begin{align*}
	\ln\mu_\W(r;f) &= \ln\mu_\W(r_1;f)+\int_{r_1}^{r}{\frac{\nu_\W(t;f)}{t}\,dt} + O(N^{3-\gamma}) + o(r^\chi) \\
	&= \frac{L}{\chi}r^\chi(1+o(1)).
\end{align*}

Applying \eqref{Mbound} to this asymptotic, we also have
\[
	\ln M(r;f) = \frac{L}{\chi}r^\chi(1+o(1))
\]
as $r\to\infty$ and $r\in(0,\infty)\setminus E$.\ \ Since $\logmea E<+\infty$ by Theorem~\ref{tauexceptional}, one can show by the same arguments as in \cite[pp. 259--261]{HN} that the same asymptotic holds as $r\to\infty$ without any exceptional set.\ \ This finishes the proof of statement (ii).
\end{proof}

\begin{remark}
The conclusions of Theorem~\ref{WVDE} do not hold in general for entire solutions of order at least $\frac12$.\ \ Consider the entire function
\[
	f(x)=\frac{1}{\Gamma(2i\sqrt{x})}+\frac{1}{\Gamma(-2i\sqrt{x})}.
\]
It can be easily verified that
\begin{align}
\label{Eg}
\begin{aligned}
	\D_\W f(x) &= f(x)\left(1+\frac{1}{4x}\right) - g(x) \hspace{20px} \mbox{and} \\
	\D_\W^2 f(x) &= f(x)\left[1+\frac{2}{4x+1}+\frac{4x-1}{4x(4x+1)^2}\right] -g(x)\left[2+\frac{2}{(4x+1)^2}\right],
\end{aligned}
\end{align}
where $g$ is the entire function
\[
	g(x)=\frac{1}{2i\sqrt{x}}\left[\frac{1}{\Gamma(2i\sqrt{x})}-\frac{1}{\Gamma(-2i\sqrt{x})}\right].
\]
Eliminating $g$ from \eqref{Eg}, we find that $f$ is an entire solution to the linear Wilson difference equation
\begin{align}
\label{Egeqn}
	a_2\D_\W^2 y +a_1\D_\W y + a_0y=0,
\end{align}
where
\[
\begin{cases}
	a_0(x)=64x^3+32x^2+16x+5 \\
	a_1(x)=-16x(8x^2+4x+1) \\
	a_2(x)=4x(4x+1)^2
\end{cases}.
\]
This solution $f$ is of order $\frac12$ but of infinite type, and none of the edges of the Newton polygon of \eqref{Egeqn} has slope $\frac{1}{2}$.\ \ So neither of the conclusions of Theorem~\ref{WVDE} hold for $f$.
\end{remark}

\bigskip
In \cite{Chiang-Feng4}, Chiang and Feng have obtained a result for difference equations which works for entire solutions of order smaller than $1$.\ \ The result was established via a direct comparison between $\frac{\Delta f}{f}$ and $\frac{f'}{f}$, which was done without using Newton series at all.\ \ This estimate is more general compared with Ishizaki and Yanagihara's result in \cite{IY}, which only works for entire solutions of order smaller than $\frac12$.\ \ Although one can potentially obtain a better result than Theorem~\ref{WVDE} by following Chiang and Feng's approach, we follow Ishizaki and Yanagihara's approach in this paper because the Wiman-Valiron theory for Wilson series established here is of function theoretic importance.

\section{Discussion}

In this paper, we have investigated an interpolation series expansion of entire functions with respect to a polynomial basis related to the Wilson divided-difference operator.\ \ A convergent series expansion of this type exists for any entire function of order smaller than $\frac{1}{2}$.\ \ Moreover, as Ishizaki and Yanagihara have done for the Newton series expansion \cite{IY}, we have developed in this paper a Wiman-Valiron theory for this type of interpolation series expansions.\ \ A key estimate for those terms which are far away from the maximal term has been established for entire functions of order smaller than $\frac{1}{3}$, and this estimate shows that the local behavior of these functions is mainly contributed by those terms which are near the maximal term.\ \ This key estimate also gives rise to a growth relation between an entire function $f$ and its $n$th Wilson difference $\D_\W^n f$, which can be applied to study difference equations involving the Wilson operator.\ \ Along the way of proving the estimate, we have also got various properties of the Wilson maximal term and central index, and have obtained a Wilson series version of the Lindel\"{o}f-Pringsheim theorem which compares the coefficients of the series with the growth of the maximal term.\ \ Combined with the Nevanlinna theory for the Wilson operator established in \cite{Cheng-Chiang}, we have got better understanding in the function theory behind the Wilson operator.\ \ There are corresponding versions of residue calculus for the Wilson operator as well as the Askey-Wilson divided-difference operator acting on meromorphic functions, and these may provide natural ways to better understand the corresponding special functions.\ \ These issues will be discussed in subsequent papers.

\section*{Acknowledgement}

The author would like to thank his PhD thesis advisor Edmund Chiang for his valuable comments and suggestions in this research.\ \ The author would also like to thank the reviewers for their valuable suggestions that have improved this paper.


\bibliographystyle{amsplain}

\end{document}